\newtheorem{theorem}{Theorem}[section]
\newtheorem{proposition}{Proposition}[section]
\newtheorem{lemma}{Lemma}[section]
\newtheorem{definition}{Definition}[section]
\newtheorem{corollary}{Corollary}[section]
\newtheorem{conjecture}{Conjecture}[section]
\newtheorem{remark}{Remark}[section]
\def\R{\mathbb{R}}
\def\Rn{{\mathbb{R}}^n_+}
\def\d{\partial}
\def\e{\epsilon}
\def\a{\alpha}
\def\b{\beta}
\def\s{\sigma}
\def\bp{\begin{proof}}
\def\ep{\end{proof}}
\def\Bc{{\mathcal B}}
\def\Hc{{\mathcal H}}
\def\R{{\cal R}}
\def\e{{\rm e}}
\def\b{\beta}
\def\eh{{{\bf{\rm{e}}}}}
\def\Sc{{\mathcal S}}
\def\a{{\alpha}}
\def\s{{\sigma}}
\def\Mc{{\mathcal M}}
\def\Ac{{\mathcal A}}
\def\eh{{{\bf{\rm{e}}}}}
\def\a{{\alpha}}
\def\s{{\sigma}}
\def\R{\mathbb{R}}
\def\Rn{{\mathbb{R}}^n_+}
\def\d{\partial}
\def\e{\epsilon}
\def\a{\alpha}
\def\b{\beta}
\def\s{\sigma}
\def\eh{{{\bf{\rm{e}}}}}
\def\a{{\alpha}}
\def\s{{\sigma}}
\begin{document}

\title[A positive mass theorem]{A positive mass theorem for asymptotically flat manifolds with a non-compact boundary}

\author{S\'{e}rgio Almaraz}
\address{Universidade Federal Fluminense (UFF) - Instituto de Matem\'{a}tica\\
              Rua M\'{a}rio Santos Braga S/N  24020-140 Niter\'{o}i, RJ, Brazil}
              \email{almaraz@vm.uff.br}
\author{Ezequiel Barbosa}
\address{Universidade Federal de Minas Gerais (UFMG), Departamento de Matem\'{a}tica, Caixa Postal 702, 30123-970, Belo Horizonte, MG, Brazil}
\email{ezequiel@mat.ufmg.br}
\author{Levi Lopes de Lima}
\address{Universidade Federal do Cear\'a (UFC),
Departamento de Matem\'{a}tica, Campus do Pici, Av. Humberto Monte, s/n, Bloco 914, 60455-760,
Fortaleza, CE, Brazil.}
\email{levi@mat.ufc.br}
\thanks{The three authors were partially supported by  CNPq/Brazil grants.}

\begin{abstract}
We prove a positive mass theorem for $n$-dimensional asymptotically flat manifolds with a non-compact boundary if either $3\leq n\leq 7$ or if $n\geq 3$ and the manifold is spin. This settles, for this class of manifolds, a question posed in a recent paper by the first author in connection with the long-term behavior of a certain Yamabe-type flow on scalar-flat compact  manifolds with boundary.
\end{abstract}

\maketitle

\section{Introduction and statements of the results}\label{int}

Let $(M^n,g)$ be an oriented  Riemannian manifold  with a non-compact boundary $\Sigma$ and dimension $n\geq 3$.  We denote by $R_g$ the scalar curvature of $(M, g)$. We also assume that $\Sigma$ is oriented by an outward pointing unit normal vector $\eta$, so that  its mean curvature is $H_g=\text{div}_g\eta$.

We say that $(M,g)$  is {\em asymptotically flat} with decay rate $\tau>0$ if there exists a compact subset $K\subset M$ and a diffeomorphism $\Psi: M\setminus K\to \mathbb R^n_+\setminus \overline{B}_1^+(0)$ such that the following asymptotic expansion holds as $r\to +\infty$:
\begin{equation}\label{asymflat}
|g_{ij}(x)-\delta_{ij}|+r|g_{ij,k}(x)|+r^2|g_{ij,kl}(x)|=O(r^{-\tau}).
\end{equation}
Here, $x=(x_1,\cdots,x_n)$ is the coordinate system induced by $\Psi$, $r=|x|$, $g_{ij}$ are the coefficients of $g$ with respect to $x$, the comma denotes partial differentiation, $\mathbb R^n_+=\{x\in\mathbb R^n; x_n\geq 0\}$ and
$\overline{B}_1^+(0)=\{x\in\mathbb R^n_+; |x|\leq 1\}$. The subset $M_{\infty}=M\backslash K$ is called the {\em{end}} of $M$. In the following we use the Einstein summation convention with the index ranges $i,j,\cdots=1,\cdots, n$ and $\alpha,\beta,\cdots=1,\cdots,n-1$. Observe that, along $\Sigma$, $\{\partial_\alpha\}_\alpha$ spans $T\Sigma$ while $\partial_n$ points inwards. 

The simplest example, and in fact the model case, of a manifold in this class is the closed half-space $\mathbb R^n_+$ endowed with the standard flat metric $\delta$. This work is devoted to the study of a certain geometric invariant which measures the deviation  at infinity of a general asymptotically flat manifold $(M,g)$ from the model space $(\mathbb R^n_+,\delta)$.

\begin{definition}\label{defasymflat}
Suppose that  $\tau>(n-2)/2$ and $R_g$ and $H_g$ are integrable in $M$ and $\Sigma$, respectively.
In terms of  asymptotically flat coordinates as above, the {\em mass} of $(M,g)$ is given by
\begin{equation}\label{massadm}
{\mathfrak m}_{(M,g)}=\lim_{r\to +\infty}\left\{\int_{\Sc_{r,+}^{n-1}}(g_{ij,j}- g_{jj,i})\mu^i d\Sc_{r,+}^{n-1}+\int_{\Sc_r^{n-2}} g_{\alpha n}\vartheta^\alpha d\Sc_r^{n-2}\right\},
\end{equation}
where $\Sc_{r,+}^{n-1}\subset M$ is a large coordinate hemisphere of radius $r$  with outward unit normal  $\mu$, and $\vartheta$ is the outward pointing unit co-normal to $\Sc_r^{n-2}=\partial \Sc_{r,+}^{n-1}$, viewed as the boundary of the bounded region $\Sigma_r\subset \Sigma$ (see Figure \ref{fig1}). 
\end{definition}


\begin{figure}[!htb]
\centering
\includegraphics[scale=0.70]{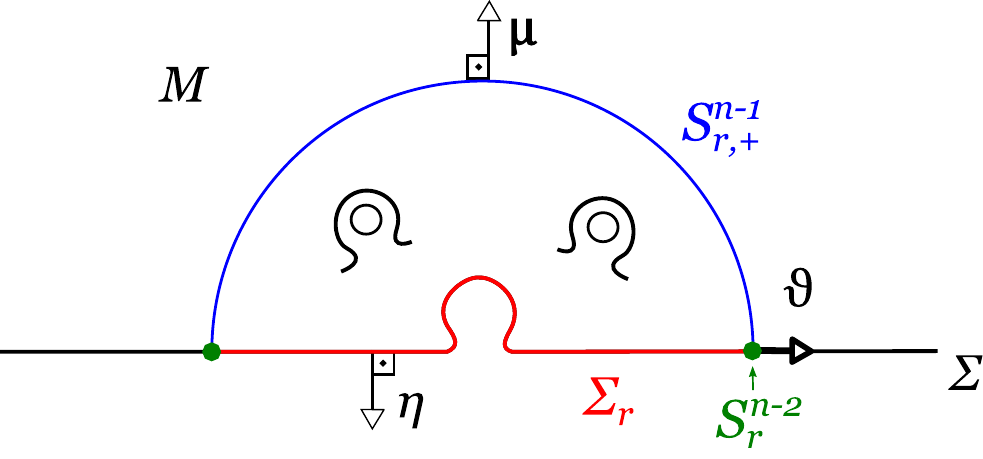}
\caption{An asymptotically flat manifold.}
\label{fig1}
\end{figure}

It  is not hard to verify that the limit on the right-hand side of (\ref{massadm}) exists. Moreover, we shall see in Section \ref{massgeo} that its value does not depend on the particular asymptotically flat coordinates chosen. Thus, $\mathfrak m_{(M,g)}$ is an invariant of the asymptotic geometry of $(M,g)$.

Besides having an obvious intrinsic geometric relevance, this invariant appears crucially in \cite{A} in connection with the global  convergence of a certain Yamabe-type flow first considered by S. Brendle in \cite{Br}, which produces scalar-flat compact domains with constant mean curvature boundary in the long-term limit. As explained in \cite{A}, the following conjecture is expected to be true.

\begin{conjecture}\label{conjpmt}
If $(M,g)$  is asymptotically flat with decay rate $\tau>(n-2)/2$ as above and satisfies $R_g\geq 0$ and  $H_g\geq 0$ then $\mathfrak m_{(M,g)}\geq 0$, with the equality occurring if and only if $(M,g)$ is isometric to $(\mathbb R^n_+, \delta)$.
\end{conjecture}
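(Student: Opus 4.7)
\bigskip

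The statement splits into two independently handleable cases and I would attack each by the analogue of its classical counterpart: a spinorial Witten-type argument in the spin case, and, for $3\leq n\leq 7$, a doubling procedure that reduces the claim to the boundaryless positive mass theorem of Schoen–Yau. Both routes rely on a preliminary geometric fact that I expect to be the first technical step of the paper: identifying the extra integral $\int_{\mathcal{S}_r^{n-2}}g_{\alpha n}\vartheta^\alpha\,d\mathcal{S}_r^{n-2}$ in \eqref{massadm} as precisely the corner/boundary contribution that makes $\mathfrak{m}_{(M,g)}$ coordinate invariant and compatible with the Stokes-type computations performed in each approach.

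For the spin case, I would fix a spin structure on $M$, pick an asymptotically parallel spinor $\psi_0$ on the end, and solve the Dirac boundary value problem $\mathcal{D}\psi=0$ on $M$ with $\psi-\psi_0\in H^1$ on the end together with a self-adjoint local chirality boundary condition $\eta\cdot\psi=\pm\psi$ on $\Sigma$. Fredholmness of this BVP on a suitable weighted Sobolev space uses $\tau>(n-2)/2$ exactly as in Witten's original argument and produces a unique solution. Applying the Lichnerowicz–Weitzenböck identity and integrating by parts over a large coordinate half-ball should yield
\begin{equation*}
c_n\,\mathfrak{m}_{(M,g)}|\psi_0|^2=\int_M\!\left(|\nabla\psi|^2+\tfrac{R_g}{4}|\psi|^2\right)dv_g+\tfrac{1}{2}\int_\Sigma H_g|\psi|^2\,d\sigma_g,
\end{equation*}
with $c_n>0$; the boundary condition is chosen precisely so that it kills the tangential Dirac term that naturally appears along $\Sigma$ and leaves only $H_g|\psi|^2$, while the two-step integration-by-parts corner along $\mathcal{S}_r^{n-2}$ produces the $g_{\alpha n}\vartheta^\alpha$ contribution in \eqref{massadm}. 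Non-negativity of $\mathfrak{m}_{(M,g)}$ then follows from $R_g,H_g\geq 0$.

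For $3\leq n\leq 7$, I would form the double $\widetilde M=M\cup_\Sigma M$ with the reflected metric $\tilde g$, which is continuous but only Lipschitz across $\Sigma$. A direct check on the expansion \eqref{asymflat} shows that $(\widetilde M,\tilde g)$ is asymptotically flat without boundary and that $\mathfrak{m}_{(\widetilde M,\tilde g)}=2\,\mathfrak{m}_{(M,g)}$, the $g_{\alpha n}$-term in \eqref{massadm} being exactly the piece that converts two hemispherical ADM integrals on the two copies of $M$ into the single full-sphere ADM integral on $\widetilde M$. Since the jump of the mean curvature across $\Sigma$ equals $2H_g\geq 0$, I would smooth $\tilde g$ on an $\varepsilon$-neighbourhood of $\Sigma$ as in Miao's PMT-with-corners argument to obtain a smooth metric $\tilde g_\varepsilon$ with $R_{\tilde g_\varepsilon}\geq -\delta_\varepsilon$ and mass close to $2\,\mathfrak{m}_{(M,g)}$, then restore $R\geq 0$ by a small conformal change solving a Yamabe-type equation $-\Delta u+c_n R_{\tilde g_\varepsilon}u=0$ normalised by $u\to 1$ at infinity. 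Applying the classical Schoen–Yau positive mass theorem in the admissible range $3\leq n\leq 7$ and letting $\varepsilon\to 0$ gives $\mathfrak{m}_{(M,g)}\geq 0$.

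The principal obstacles are, in the spin case, establishing Fredholmness and solvability of the chirality Dirac BVP on an asymptotically flat half-space end with weighted Sobolev spaces in the borderline range $\tau>(n-2)/2$ and, in the doubling case, the joint control of scalar curvature and of the ADM mass through Miao's smoothing/conformal procedure, which was originally designed for closed manifolds and needs to be localised so that the change in mass is controlled independently of the distortion near $\Sigma$. Rigidity in both approaches then follows the standard chain: $\mathfrak{m}_{(M,g)}=0$ forces $\nabla\psi\equiv 0$ for every asymptotic spinor $\psi_0$, hence produces $2^{[n/2]}$ linearly independent parallel spinors on $M$, which in turn force $g$ to be flat and $\Sigma$ to be totally geodesic; a routine identification of the universal cover then gives $(M,g)\cong(\mathbb{R}^n_+,\delta)$.
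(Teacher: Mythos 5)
Your two routes are indeed the ones the paper follows (a doubling argument reduced to Miao's positive mass theorem with corners, and a Witten-type formula for the spin case), and the spin half of your plan matches Theorem \ref{wittenform} essentially verbatim, up to the small slip that the local boundary condition must be $i\,\eta\cdot\psi=\pm\psi$ rather than $\eta\cdot\psi=\pm\psi$ (Clifford multiplication by a unit normal has eigenvalues $\pm i$; the paper uses the involution $\varepsilon=i\gamma(\nu)$ and imposes $\psi_-=0$). The genuine gap is in your doubling step for $3\leq n\leq 7$: you double the \emph{original} metric, so the Lipschitz corner of $\tilde g$ lies along the whole non-compact hypersurface $\Sigma$, which runs out to spatial infinity. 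Miao's theorem \cite{Mi}, and the smoothing-plus-conformal-correction scheme you invoke, require the corner to be a \emph{compact} hypersurface: the mollification produces large negative scalar curvature in an $\varepsilon$-neighborhood of the corner, and the control of the conformal factor, of the decay rate, and of the mass depends on that region being compact and away from infinity. With a corner reaching infinity the doubled metric is not even asymptotically flat in the $C^2$ sense of \eqref{asymflat}, and your claimed identity $\mathfrak m_{(\widetilde M,\tilde g)}=2\,\mathfrak m_{(M,g)}$ (with the $g_{\alpha n}\vartheta^\alpha$ term absorbed as an equatorial contribution) is not justified at that regularity. The missing idea, which is the paper's first technical step, is Proposition \ref{conf:flat:assump}: a Schoen--Yau-type conformal improvement of the asymptotics which, at the cost of an arbitrarily small change of mass, makes $g$ conformally flat with $R_g\equiv 0$ and $H_g\equiv 0$ near infinity. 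Then $\Sigma$ is umbilic and hence totally geodesic outside a compact set, the double is $C^{2,\alpha}$ there, $g_{\alpha n}=0$ along $\Sigma$ at infinity so the mass reduces to the hemispherical ADM integral and genuinely doubles (Remark \ref{keyrem}), and the possible corner can be confined to a \emph{compact} closed hypersurface $\Sigma'$, to which Miao's result applies. Without this step your reduction does not go through.

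A second gap concerns rigidity: your equality analysis via parallel spinors only covers the spin case, while for $3\leq n\leq 7$ without a spin hypothesis you offer nothing, and nothing follows from Miao's inequality alone. The paper closes this with the variational characterization of the mass (Proposition \ref{propvar}): if $\mathfrak m_{(M,g)}=0$, one perturbs $g$ by compactly supported symmetric tensors, restores $R\geq 0$, $H\geq 0$ by solving the conformal boundary value problem (\ref{sistema:phi}), and concludes from criticality that ${\rm Ric}_g=0$ and $A_g=0$ (Lemma \ref{scalar:rigidity}); the double is then a complete asymptotically flat manifold with nonnegative Ricci curvature, hence flat, which yields $(M,g)\cong(\mathbb R^n_+,\delta)$ (Lemma \ref{ricci:rigidity}). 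You would need an argument of this kind (or a careful equality analysis in the minimal-hypersurface or corner setting) to complete the non-spin case.
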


This conjecture has been confirmed in some special cases in \cite{Es, Ra}. In this work we show more generally that it holds true whenever the standard Positive Mass Conjecture holds (see \cite{SY1, SY2, Wi, Ba}). More precisely, the following result holds.

\begin{theorem}\label{mainm}
Conjecture \ref{conjpmt} holds true if either $3\leq n\leq  7$ or if $n\geq 3$ and $M$ is spin. 
\end{theorem}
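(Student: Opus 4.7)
The plan is to reduce Conjecture \ref{conjpmt} to the classical positive mass theorem via a \emph{doubling construction} across $\Sigma$. Starting from $(M,g)$, I form $\widetilde{M}=M\cup_{\Sigma}M$ and equip it with a metric $\widetilde{g}$ obtained by reflecting $g$ through $\Sigma$. Because $\Sigma$ is asymptotic to $\partial\mathbb R^n_+$, the doubled manifold $\widetilde{M}$ has no boundary and is asymptotically flat in the classical sense with a single end modeled on $\mathbb R^n$. The argument then boils down to two claims: (i) the ADM mass of $(\widetilde{M},\widetilde{g})$ equals $2\,\mathfrak m_{(M,g)}$, and (ii) the distributional scalar curvature of $\widetilde{g}$ is nonnegative under $R_g\ge 0$ and $H_g\ge 0$. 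Granting both, the classical PMT applied (after smoothing) to $\widetilde{g}$ yields $\mathfrak m_{(M,g)}\ge 0$. Note that spinness of $M$ transfers to spinness of $\widetilde{M}$, so the spin case reduces to Witten's theorem on $\widetilde{M}$, while in dimensions $3\le n\le 7$ one uses Schoen--Yau.

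To prepare the doubling, I would first exploit the coordinate-invariance of (\ref{massadm}) (to be established in Section \ref{massgeo}): the extra integral $\int_{\mathcal{S}^{n-2}_r}g_{\alpha n}\vartheta^\alpha$ is precisely what makes $\mathfrak m_{(M,g)}$ geometric when the chart at infinity is not adapted to $\Sigma$. Using this invariance, I would select an asymptotically flat chart in which $\Sigma$ near infinity is straightened to $\{x_n=0\}$ in Fermi form, so that $g_{\alpha n}\equiv 0$ along $\Sigma$, the boundary term in (\ref{massadm}) drops out, and the reflected metric $\widetilde{g}$ is $C^0$ and Lipschitz across the seam. Splitting the ADM integral of $\widetilde{g}$ on $\mathcal{S}^{n-1}_r$ into upper and lower hemispheres and using the reflection symmetry then immediately identifies $\mathfrak m_{\widetilde{g}}$ with $2\,\mathfrak m_{(M,g)}$. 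In such coordinates, the distributional scalar curvature $R_{\widetilde{g}}$ equals $R_g$ in the interior plus a term $2H_g$ supported on $\Sigma$ (this is the standard jump formula for metrics Lipschitz across a hypersurface, the second fundamental forms of the two sides being opposite), hence is nonnegative.

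To invoke the smooth PMT I would apply Miao's regularization: mollify $\widetilde{g}$ in a thin collar of $\Sigma$ to produce smooth metrics $\widetilde{g}_\varepsilon$ with $R_{\widetilde{g}_\varepsilon}\ge -\eta(\varepsilon)$ where $\eta(\varepsilon)\to 0$ in a suitable $L^1$-sense, then solve the conformal Laplace equation $-\frac{4(n-1)}{n-2}\Delta_{\widetilde{g}_\varepsilon} u_\varepsilon + R_{\widetilde{g}_\varepsilon}u_\varepsilon=0$ with $u_\varepsilon\to 1$ at infinity to obtain smooth scalar-flat approximants $u_\varepsilon^{4/(n-2)}\widetilde{g}_\varepsilon$ whose masses converge to $\mathfrak m_{\widetilde{g}}$. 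The classical PMT then yields $\mathfrak m_{(M,g)}\ge 0$. For the rigidity statement, vanishing of $\mathfrak m_{(M,g)}$ implies vanishing of $\mathfrak m_{\widetilde{g}}$, the classical rigidity forces $\widetilde{g}$ to be flat Euclidean, and the reflection symmetry then gives that $(M,g)$ is isometric to $(\mathbb R^n_+,\delta)$.

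The main technical obstacle I anticipate is producing the Fermi-type asymptotic chart \emph{while preserving the decay rate} $\tau>(n-2)/2$, because $\Sigma$ is itself only asymptotically planar with the same decay; one must verify that the straightening diffeomorphism does not worsen the bounds (\ref{asymflat}). A second delicate point is that the Miao smoothing and subsequent conformal deformation must be executed uniformly in the asymptotic end: the mollification must respect the $O(r^{-\tau})$ decay, and one must establish that the linear Poisson problem for $u_\varepsilon$ admits solutions with decay compatible with the mass integral, so that $\mathfrak m_{\widetilde{g}_\varepsilon}\to\mathfrak m_{\widetilde{g}}$ as $\varepsilon\to 0$. The combination of the nonstandard boundary integral in (\ref{massadm}) with these asymptotic regularity issues is where the half-space setting genuinely departs from the classical one.
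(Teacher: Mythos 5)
Your overall strategy (double across $\Sigma$ and reduce to the classical positive mass theorem via Miao's corner result) is the same as the paper's first proof, but there is a genuine gap in how you set up the doubling. You reflect $g$ directly, so the non-smooth seam of $\widetilde g$ is the \emph{entire non-compact} hypersurface $\Sigma$. Miao's theorem, and in particular his regularization scheme (mollification in a collar followed by a conformal correction), is formulated for corners along a \emph{compact} hypersurface: the scalar-curvature error created by the mollification is supported in a compact collar, which is what makes the conformal factor exist, tend to $1$ at infinity at the right rate, and perturb the mass by $o(1)$. With a corner running out to infinity none of this is automatic, and you yourself flag exactly these points ("uniformly in the asymptotic end", decay of the mollification, solvability of the Poisson problem with mass-compatible decay) without resolving them. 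The paper removes this obstruction \emph{before} doubling: Proposition \ref{conf:flat:assump} conformally deforms $g$ (changing the mass by at most $\e$) so that near infinity the metric is conformally flat with $R\equiv0$ and $H\equiv0$; then $\Sigma$ is totally geodesic near infinity, the double is $C^{2,\a}$ outside a compact set, and the corner can be taken along a \emph{closed} hypersurface $\Sigma'$ with $H^-\ge H^+$, to which Miao's theorem (mildly extended, as noted in the paper's remark) applies. Your Fermi-straightening step only achieves $g_{\alpha n}=0$ along $\Sigma$, i.e.\ a Lipschitz double with nonnegative distributional scalar curvature, which is weaker than what is needed to invoke the existing corner PMT; supplying the missing uniform-regularization argument would amount to reproving a non-compact-corner version of Miao's theorem.

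The rigidity part of your argument is also not justified as stated: classical PMT rigidity applies to smooth metrics, and even in Miao's setting the zero-mass case does not immediately give flatness of a Lipschitz corner metric, so "classical rigidity forces $\widetilde g$ to be flat" needs an argument you do not provide. Moreover, since the inequality is proved only for the deformed metrics $\bar g$ with $|\mathfrak m_{(M,\bar g)}-\mathfrak m_{(M,g)}|\le\e$, the equality case cannot be read off from the approximants. The paper instead derives rigidity from the variational characterization of the mass (Proposition \ref{propvar}): Lemma \ref{scalar:rigidity} shows that $\mathfrak m_{(M,g)}=0$ forces ${\rm Ric}_g=0$ and $A_g=0$ by differentiating the mass along conformally corrected compactly supported deformations, and Lemma \ref{ricci:rigidity} then doubles the now totally geodesic boundary to conclude $(M,g)\cong(\Rn,\delta)$. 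You would need to either adopt this variational route or prove a rigidity statement for corner/Lipschitz metrics to close your version of the argument.
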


Combined with the main result in \cite{A}, this guarantees the global convergence of the Yamabe-type  flow introduced in \cite{Br} for any initial scalar-flat compact manifold with boundary which meets the conditions of the theorem (i.e. either it is spin or has dimension $n\leq 7$).
The following 
immediate consequence of the rigidity statement in Theorem \ref{mainm} is also worth noticing. 

\begin{corollary}
\label{mainmcor}
Let $(M,g)$ be as in Theorem \ref{mainm} and assume further that there exists a compact subset $K\subset M$ such that $(M\setminus K,g)$ is isometric to $(\mathbb R^n_+\setminus \overline B_1^+(0),\delta)$. Then $(M,g)$ is isometric to $(\mathbb R^n_+, \delta)$.
\end{corollary}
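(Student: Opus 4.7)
The plan is to deduce Corollary \ref{mainmcor} directly from the rigidity clause of Theorem \ref{mainm} (equivalently, of Conjecture \ref{conjpmt}). Since the hypothesis ``as in Theorem \ref{mainm}'' already places $(M,g)$ in the class covered by that result, it is enough to show $\mathfrak m_{(M,g)}=0$; the equality case of the theorem then forces $(M,g)$ to be globally isometric to $(\mathbb R^n_+,\delta)$.

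First, I would use the given isometry to produce a global asymptotically flat chart $\Psi:M\setminus K\to \mathbb R^n_+\setminus\overline B_1^+(0)$ on the end. In these coordinates the metric coefficients satisfy $g_{ij}\equiv \delta_{ij}$ identically, so the decay \eqref{asymflat} holds trivially for every $\tau>0$, and $R_g$ and $H_g$ have compact supports (contained in $K$ and $K\cap\Sigma$ respectively) and are thus automatically integrable. Hence the setup of Definition \ref{defasymflat} is in force, the sign conditions $R_g\geq 0$ and $H_g\geq 0$ remain intact, and the dimension/spin hypothesis is inherited from the statement, so Theorem \ref{mainm} applies to $(M,g)$.

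Next, I would compute the mass directly in this chart. Choose $r_0$ large enough that $\Sc_{r,+}^{n-1}\subset \Psi(M\setminus K)$ for every $r\geq r_0$. For such $r$ the bulk integrand $g_{ij,j}-g_{jj,i}$ on $\Sc_{r,+}^{n-1}$ and the boundary integrand $g_{\alpha n}$ on $\Sc_r^{n-2}$ both vanish identically, because $g\equiv\delta$ there. Passing to the limit $r\to +\infty$ in \eqref{massadm} yields $\mathfrak m_{(M,g)}=0$.

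Finally, I would invoke the equality statement in Theorem \ref{mainm}: with $\mathfrak m_{(M,g)}=0$ and the dimension/spin hypothesis satisfied, the theorem concludes that $(M,g)$ is globally isometric to $(\mathbb R^n_+,\delta)$, which is exactly the content of the corollary. There is no genuine obstacle in this argument — the corollary is essentially a restatement of the rigidity clause under the stronger assumption that $g$ is honestly flat outside a compact set, and the only substantive content is the elementary observation that the ADM-type boundary expression \eqref{massadm} is manifestly zero whenever the metric is truly flat near infinity, which in turn triggers rigidity.
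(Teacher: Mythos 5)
Your proposal is correct and is exactly the argument the paper intends: the corollary is stated as an immediate consequence of the rigidity clause of Theorem \ref{mainm}, obtained by noting that in the flat chart on $M\setminus K$ the integrands in (\ref{massadm}) vanish identically, so $\mathfrak m_{(M,g)}=0$ (using the chart-independence of the mass from Proposition \ref{massinv}), and the equality case then yields the isometry with $(\mathbb R^n_+,\delta)$.
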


We provide here two proofs of Theorem \ref{mainm}.  
In the proof presented in Section \ref{firstproof}, the first step  consists of an improvement of the asymptotics of the given metric in the spirit of the classical proof of the standard Positive Mass Theorem by Schoen and Yau (\cite{SY2}). Once this technical step is accomplished in Proposition \ref{conf:flat:assump}, this proof proceeds by a reduction to the classical cases via a doubling construction. We employ a result by Miao (\cite{Mi}), which covers the situation in which corners along a compact inner hypersurface appear, to prove that the conjecture above holds true whenever the classical Positive Mass Theorem holds for the doubled manifold. We also use the improvement in the asymptotics to present an alternative proof of Theorem \ref{mainm} in the case $3\leq n\leq 7$ which is more in the spirit of the classical arguments by Schoen-Yau (\cite{SY1}). More precisely, we show that the assumption of negative mass implies the existence of a stable minimal hypersurface without boundary leading to  a contradiction as in \cite{SY1}. In these proofs,
the rigidity statement in the theorem follows by means of the variational characterization  of the mass given in Proposition \ref{propvar}.
Finally, if $n\geq 3$ and $M$ is spin we present in Theorem \ref{wittenform} the natural extension of Witten's celebrated formula for the mass in terms of a suitable harmonic spinor globally defined on $M$. 
The proof of Theorem \ref{mainm} in this case is an immediate consequence of this expression.
\begin{remark}
\label{gener}
{\rm
We can conceive a version of Theorem \ref{mainm} in which the manifold $(M,g)$ has two collections of finitely many ends, say $\{E_l\}_{l=1}^m$ and $\{E'_{l}\}_{l=1}^{m'}$, which we assume endowed with diffeomorphisms $\Psi_l: E_l\to\mathbb R^n_+\setminus \overline{B}_1^+(0)$ and $\Psi'_{l}: E'_{l}\to\mathbb R^n\setminus \overline{B}_1(0)$ such that the expansion (\ref{asymflat}) holds. To each end $E_l$ we associate the mass given by (\ref{massadm}), and to each end $E'_l$ we associate its standard ADM  mass as in \cite{Ba,LP}. In this setting, the result says that if $R_g\geq 0$ and $H_g\geq 0$ then the mass of each end is non-negative. Moreover, if at least one mass vanishes then $(M,g)$ actually has only one end, being isometric either to $(\mathbb R^n_+,\delta)$ or to $(\mathbb R^n,\delta)$, according to the type of the end. The proofs of these more general statements follow by straightforward adaptations of the arguments presented here and therefore are omitted.
Observe that, since we are not assuming that $\Sigma$ is connected, we  allow for the presence of finitely many compact boundary components. If we think of $(M,g)$ as being the initial data set for a time-symmetric solution of Einstein fields equations, then these components may be viewed as (past or future) trapped hypersurfaces. In fact, the rigidity statement above actually implies that, in the presence of such compact trapped hypersurfaces, the mass of each end is actually positive. This is of course related to the positive mass theorem for black holes first considered in \cite{GHHP} (see also \cite{H}). For more recent results along these lines in the classical spin setting we refer to \cite{DX} and the references therein. }
\end{remark}

This paper is organized as follows. In Section \ref{mass} we give a motivation for the definition of the mass, by showing that it can be approached from a variational perspective. In Section \ref{massgeo} we prove that the mass is a geometric invariant in the sense that it does not depend on the asymptotic structure and varies smoothly with the metric. The first proof of Theorem \ref{mainm} is presented in Section \ref{firstproof}. As already mentioned, this proof makes use of a result due to Miao to reduce our positive mass theorem to the classical version, for manifolds without boundary. In Section \ref{anotherproof} we provide a second proof of our main theorem by adapting the arguments of Schoen and Yau, for dimensions up to seven, and the arguments of Witten, for spin manifolds.  The appendix is devoted to the proof of a technical result concerning some elliptic problems in weighted H\"{o}lder spaces.  

\vspace{0.2cm}

\noindent
{\bf Acknowledgements:} We first learned about the notion of mass in (\ref{massadm}) from Professor F. Marques. We would like to thank him for suggesting us that Theorem \ref{mainm} should hold true and for helpful discussions. The first and second authors would like to thank the hospitality of Professor A. Neves at Imperial College London where part of this research was carried out. While at Imperial College, the first author was supported by CAPES/Brazil and CNPq/Brazil grants and the second one by a CNPq grant.

\section{The variational approach to the mass}\label{mass}

In this section we show how the mass ${\mathfrak m}_{(M,g)}$ can be approached from a variational perspective. 
This not only motivates Definition \ref{defasymflat}  but also plays a key role in the proof of the rigidity statement in Theorem \ref{mainm}.  

The arguments here are similar to those used in  \cite[Section 8]{LP} for the ADM mass and we start by recalling this procedure.
We consider a manifold $M$ of dimension $n\geq 3$. Recall that $\Mc$, the space of Riemannian metrics on $M$, is an open cone in ${\rm Sym}^2(M)$, the space of bilinear symmetric tensors on $M$. Thus, if $g\in \Mc$  and $\delta g\in {\rm Sym}^2(M)$ is small enough then $g+\delta g\in \Mc$.
We recall that the corresponding variation for 
the  scalar curvature $R=R_g$ is
\[
\delta R=\nabla_i(\nabla_k\delta g^{ik}-\nabla^i{\overline \delta g})-R_{ik}\delta g^{ik},
\]
where $\overline \delta g =g^{ik}\delta g_{ik}$, $\nabla$ is the Levi-Civita connection of $g$ (extended to act on tensors) and $R_{ik}$ is the Ricci tensor. Also, the variation of the volume element is
\begin{equation}\label{liou}
\delta dM_g=\frac{1}{2}{\overline \delta g}dM_g.
\end{equation}
This allows us to compute the variation of the Hilbert-Einstein action given by
\[
g\in\Mc\mapsto \Ac(g)=\int_MRdM_g\,.
\]
We have
\begin{equation}\label{varac}
\delta \Ac=   \int_M \nabla_i(\nabla_k\delta g^{ik}-\nabla^i{\overline \delta g})dM_g -\int_M\left(
                   R_{ik}-\frac{R}{2}g_{ik}\right)\delta g^{ik} dM_g.
\end{equation}
Thus, if no boundary is present the first term in the right-hand side vanishes after integration by parts and we obtain the usual variational formula, namely,
\begin{equation}\label{eulerhe}
\delta \Ac=  -\int_M\left(
                   R_{ik}-\frac{R}{2}g_{ik}\right)\delta g^{ik} dM_g,
\end{equation}
In particular, 
critical metrics for the Hilbert-Einstein action are precisely Ricci-flat metrics.
This applies if $M$ is closed or, more generally, if the variation $\delta g$ is compactly supported.

If $M$ is asymptotically flat (with an empty boundary) then it is natural to consider variations preserving this kind of structure at infinity. This time a boundary contribution appears and, as explained in \cite{LP}, the ADM mass is precisely the term that should be subtracted from $\Ac$ to restore the expected form of the variational principle. More precisely, if for any such metric $g$ on $M$ we define the ADM mass of $(M,g)$ as 
\begin{equation}\label{admmass}
m_{(M,g)}=\lim_{r\to +\infty}\int_{\Sc_r}(g_{ij,j}-g_{jj,i})\mu^id\Sc_r^{n-1},
\end{equation}
where $\mu$ is the outward unit normal to a large coordinate sphere $\Sc_r^{n-1}$ in the asymptotic region, 
and set
\[
\Bc(g)=\Ac(g)-m_{(M,g)},
\]
then it follows from (\ref{varac}) that
\begin{equation}\label{euler}
\delta \Bc=-\int_M\left(R_{ik}-\frac{R}{2}g_{ik}\right)\delta g^{ik} dM_g,
\end{equation}
the obvious analogue of (\ref{eulerhe}).

Let us now assume that  $(M,g)$ is asymptotically flat with a non-compact boundary as in Theorem \ref{mainm}.
We claim that the natural analogue of $\Ac$ is the {\em Gibbons-Hawking-York action}  (\cite{GH, Y}) given by
\begin{equation}\label{ghyaction}
\widetilde\Ac(g)=\int_M RdM_g +2\int_\Sigma Hd\Sigma_h,
\end{equation}
where $h=g|_\Sigma$ and $H=H_g$.
As before, one must subtract the mass $\mathfrak m_{(M,g)}$ from this in order to restore the expected form of the variational principle. 

\begin{proposition}
\label{propvar}
If $(M,g)$ is asymptotically flat and 
\[
\widetilde \Bc=\widetilde  \Ac-{\mathfrak m}_{(M,g)},
\]
then
\begin{equation}\label{variform2}
\delta\widetilde \Bc= -\int_{M}\left(R_{ik}-\frac{R}{2}g_{ik}\right)\delta g^{ik}dM_g-\int_{\Sigma}\left(A_{\alpha\beta}-Hh_{\alpha\beta}\right)\delta h^{\alpha\beta}d\Sigma_h,
\end{equation}
where $A$ is the shape operator of $\Sigma$.
\end{proposition}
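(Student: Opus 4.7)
The strategy, inspired by the treatment in \cite[Section 8]{LP} for the ADM mass, is to compute $\delta\widetilde{\Ac}$ directly and verify that all boundary contributions at infinity assemble into $\delta\mathfrak m_{(M,g)}$, so that they disappear upon subtracting $\mathfrak m_{(M,g)}$.

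\textbf{Step 1 (Bulk variation).} Using the stated formulas for $\delta R$ and $\delta dM_g$, together with $g_{ik}\delta g^{ik}=-\overline{\delta g}$, one obtains
$$\delta\!\int_M R\, dM_g = -\int_M\Bigl(R_{ik}-\tfrac{R}{2}g_{ik}\Bigr)\delta g^{ik}\, dM_g + \int_M \nabla_i V^i\, dM_g,$$
where $V^i = \nabla_k\delta g^{ik}-\nabla^i\overline{\delta g}$. Applying the divergence theorem to the truncated region $M_r=\{p\in M:\,r(p)\leq r\}$, whose boundary is $\Sigma_r\cup\Sc^{n-1}_{r,+}$, and letting $r\to\infty$, splits the divergence into a contribution on $\Sigma$ (with outward normal $\eta$) and an asymptotic contribution over $\Sc^{n-1}_{r,+}$ (with outward normal $\mu$).

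\textbf{Step 2 (Hemisphere).} Assuming $\delta g$ preserves the decay rate, a direct computation using the asymptotic expansion \eqref{asymflat} shows that, modulo terms that vanish as $r\to\infty$, $V^i\mu_i$ agrees with $\delta(g_{ij,j}-g_{jj,i})\mu^i$. Hence the limit over $\Sc^{n-1}_{r,+}$ matches $\delta$ of the first integral in the definition \eqref{massadm} of the mass.

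\textbf{Step 3 (Boundary variation).} Next, vary $2H\, d\Sigma_h$ using the classical formulas for the variations of the shape operator $A$ and of the induced volume form, decomposing $\delta g|_\Sigma$ into its tangential part $\delta h$ and its normal components along $\eta$. The familiar Gibbons--Hawking--York identity (see \cite{Y}) then takes the form
$$2\,\delta\!\int_\Sigma H\, d\Sigma_h = -\int_\Sigma V^i\eta_i\, d\Sigma_h - \int_\Sigma\bigl(A_{\alpha\beta}-H h_{\alpha\beta}\bigr)\delta h^{\alpha\beta}\, d\Sigma_h + \int_\Sigma\mathrm{div}_h W\, d\Sigma_h,$$
where $W$ is a tangent vector field on $\Sigma$ built linearly out of $\delta g|_\Sigma$ and its tangential derivatives. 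The first term precisely cancels the $\Sigma$-contribution produced in Step 1, leaving the $(A_{\alpha\beta}-Hh_{\alpha\beta})$-term of \eqref{variform2}.

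\textbf{Step 4 (Asymptotic boundary of $\Sigma$).} Apply Stokes' theorem on $\Sigma$ to convert $\int_\Sigma\mathrm{div}_h W\, d\Sigma_h$ into $\lim_{r\to\infty}\int_{\Sc^{n-2}_r} W\!\cdot\!\vartheta\, d\Sc^{n-2}_r$. Expanding $W$ asymptotically via \eqref{asymflat} and keeping only the leading terms, this limit coincides with $\delta$ of the second integral in \eqref{massadm}.

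Combining Steps 1--4 yields
$$\delta\widetilde{\Ac} = -\int_M \Bigl(R_{ij}-\tfrac{R}{2}g_{ij}\Bigr)\delta g^{ij}\, dM_g - \int_\Sigma\bigl(A_{\alpha\beta}-Hh_{\alpha\beta}\bigr)\delta h^{\alpha\beta}\, d\Sigma_h + \delta\mathfrak m_{(M,g)},$$
and subtracting $\mathfrak m_{(M,g)}$ delivers \eqref{variform2}.

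The main obstacle is the careful identification of the tangential field $W$ in Step 3 and the verification in Step 4 that the induced asymptotic integral over $\Sc^{n-2}_r$ reproduces exactly the $g_{\alpha n}\vartheta^\alpha$ piece of \eqref{massadm}. This amounts to a delicate accounting of how the normal components of $\delta g|_\Sigma$ feed through the variations of $H$ and $d\Sigma_h$, and it is precisely this matching that forces the mass formula \eqref{massadm} to include the second, codimension-two integral in addition to the hemispheric ADM-like term.
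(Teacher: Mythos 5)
Your plan follows essentially the same route as the paper's proof: vary the truncated action $\widetilde\Ac_r$ on $M_r$, split the bulk divergence into contributions over $\Sigma_r$ and $\Sc^{n-1}_{r,+}$, combine the $\Sigma_r$ term with the variation of $2H\,d\Sigma_h$ to isolate $-(A_{\alpha\beta}-Hh_{\alpha\beta})\delta h^{\alpha\beta}$ plus a tangential divergence (the paper's $W$ is exactly $W_\alpha=\delta g_{\alpha i}\eta^i$), and convert the latter by Stokes into the $\Sc^{n-2}_r$ integral that, together with the hemisphere term, gives $\delta\mathfrak m_{(M,g)}$. The only cosmetic difference is that the paper justifies the convergence of these two boundary integrals to $\delta\mathfrak m_{(M,g)}$ by appealing to the smooth dependence of the mass established in Section \ref{massgeo}, rather than by the direct asymptotic matching you sketch.
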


\begin{proof}
We adapt a classical computation (\cite{Ar, Lo}) to the quantity
\begin{equation}\label{ghyactionk}
\widetilde\Ac_r(g)=\int_{M_r} RdM_g +2\int_{\Sigma_r} Hd\Sigma_h,
\end{equation}
where $M_r$ is the compact domain whose boundary is $\Sigma_r\cup \Sc_{r,+}^{n-1}$; see Figure \ref{fig1}. Notice that this is {\em not} the standard GHY action for the compact manifold  $M_r$   since the boundary integral over $\Sc_{r,+}^{n-1}$
is missing.

In order to compute $\delta\widetilde \Ac_r$ we note that from (\ref{varac}) the variation of $\Ac_r$, the Hilbert-Einstein action evaluated on $M_r$, is
\begin{eqnarray}\label{eq:deltaA}
\delta \Ac_r & = &    \int_{\Sigma_r} \eta^i(\nabla_k\delta g^{k}_i-\nabla_i{\overline \delta g})d\Sigma_h
    +\int_{\Sc_{r,+}^{n-1}} \mu^i(\nabla_k\delta g^{k}_i-\nabla_i{\overline \delta g})dS_{r,+}^{n-1}\notag \\
     & & \quad -\int_{M_r}\left(
                   R_{ik}-\frac{R}{2}g_{ik}\right)\delta g^{ik} dM_g,
\end{eqnarray}
where $dS_{r,+}^{n-1}$ is the area element of $S_{r,+}^{n-1}$.
As usual,  
we adopt the index ranges $i,j,\cdots=1,\cdots,n$ and $\alpha,\beta,\cdots=1,\cdots,n-1$ and choose coordinates 
around $p\in \Sigma$
so that $\{\partial_\alpha\}_\alpha$ spans $T\Sigma$ while $\partial_n$ points inwards. Moreover, we assume that $\eta=-\partial_n$ at $p$.

Since
\begin{equation}
\label{unitnormalexp}
\eta=-(g^{nn})^{-1/2}g^{ni}\partial_i\,,
\end{equation}
the second fundamental form is
\begin{equation}\label{qual}
A_{\alpha\beta}=-\langle \eta,\nabla_\alpha\partial_\beta\rangle=(g^{nn})^{-1/2}\Gamma_{\alpha\beta}^n.
\end{equation}
and the mean curvature is 
\begin{equation}
\label{meancurvloc}
H =h^{\alpha\beta}A_{\alpha\beta}=\frac{1}{2}(g^{nn})^{1/2}
          (2g_{n\alpha,\alpha}-
             g_{\alpha\alpha,n}).
\end{equation}
This allows us to compute the variation of $H$. After evaluation at $p$, the final result is 
\[
\delta H=\nabla^\Sigma_\alpha\delta g^{\alpha}_n-\frac{1}{2}h^{\alpha\beta}\nabla_n\delta h_{\alpha\beta}-\frac{1}{2}H\delta g_{nn},
\] 
where $\nabla^\Sigma$ is the induced connection on $\Sigma$.  From this and (\ref{liou}) we see that 
\begin{equation}\label{varform}
\delta(2Hd\Sigma_h)=\left(2\nabla^\Sigma_\alpha\delta g^\alpha_n-h^{\alpha\beta}\nabla_n\delta h_{\alpha\beta}+H\delta h^{\alpha}_\alpha-H\delta g^n_ n\right)d\Sigma_h.
\end{equation}

On the other hand, at $p$ we have 
\begin{eqnarray*}
\eta^i(\nabla_k\delta g^{k}_i-\nabla_i{\overline \delta g}) & = & -\left(\nabla_\alpha\delta g^\alpha_n-\nabla_n{\overline \delta g}\right)\\
& = & -\nabla^\Sigma_\alpha\delta g_n^\alpha+\Gamma^\beta_{n \alpha}\delta g_\beta^\alpha-\Gamma^\alpha_{n\alpha}\delta g^n_n+
       \nabla_n(g^{\alpha\beta}\delta g_{\alpha\beta}),
\end{eqnarray*}
so that
\begin{equation}\label{inter}
\eta^i(\nabla_k\delta g^{k}_i-\nabla_i{\overline \delta g})=
  -\nabla^\Sigma_\alpha\delta g^\alpha_n +g^{\alpha\beta}\nabla_n\delta g_{\alpha\beta} -A^{\alpha\beta}\delta g_{\alpha\beta}+H\delta g_n^n.
\end{equation}
Thus, if we combine (\ref{inter}),  (\ref{varform}) and (\ref{eq:deltaA}) we get 
\begin{eqnarray*}
\delta\widetilde{\Ac}_r & = & -\int_{M_r}\left(R_{ik}-\frac{R}{2}g_{ik}\right)\delta g^{ik}dM_g-\int_{\Sigma_r}\left(A_{\alpha\beta}-Hh_{\alpha\beta}\right)\delta h^{\alpha\beta}d\Sigma_h \\
 & & \quad +\int_{\Sc_{r,+}^{n-1}} \mu^i(\nabla_k\delta g^{k}_i-\nabla_i{\overline \delta g})dS_{r,+}^{n-1}  -\int_{\Sigma_r} \nabla^{\Sigma}_\alpha \delta  g^\alpha_i\eta^i d\Sigma_h.
\end{eqnarray*}
The last integral is clearly a divergence so we can rewrite this as
\begin{eqnarray*}
\delta\widetilde{\Ac}_r & = & -\int_{M_r}\left(R_{ik}-\frac{R}{2}g_{ik}\right)\delta g^{ik}dM_g-\int_{\Sigma_r}\left(A_{\alpha\beta}-Hh_{\alpha\beta}\right)\delta h^{\alpha\beta}d\Sigma_h \\
 & & \quad +\int_{\Sc_{r,+}^{n-1}} \mu^i(\nabla_k\delta g^{k}_i-\nabla_i{\overline \delta g})dS_{r,+}^{n-1}  -\int_{\Sc_r^{n-2}} \vartheta^\alpha \delta  g_{\alpha i}\eta^i d\Sc_r^{n-2}.
\end{eqnarray*}
It follows from the results in the next section that  the last two integrals converge as $r\to +\infty$ to $\delta {\mathfrak m}_{(M,g)}$, the variation of the mass.
From this, (\ref{variform2}) follows easily.
\end{proof}

We  thus see that Ricci-flat metrics are again critical for $\widetilde \Bc$ with respect to variations fixing the metric along the boundary ($\delta h=0$). 

\section{The mass as a geometric invariant}\label{massgeo}

In this section we give a proof of the geometric invariance of the mass by adapting the standard arguments in \cite{Ba, LP}. We also show that this invariant depends smoothly on the asymptotically flat metric, thus justifying the computation leading to (\ref{variform2}). 

Let us define the function $r(x)$ as any smooth, positive extension of the asymptotic parameter $|x|$ to $M$.
We start by recalling the expansions of the scalar curvature and the mean curvature in the asymptotic region.

\begin{proposition}
\label{expasymreg}
One has 
\begin{equation}
\label{asymexpscal}
R=\mathcal{C}_{i,i} + \Theta, \quad \Theta=O(r^{-2\tau-2}),
\end{equation}
and 
\begin{equation}
\label{asympexpmean}
H  =  \frac{1}{2}\left(-\langle \mathcal{C},\eta\rangle+g_{n\alpha,\alpha}\right)+ \Theta',\quad \Theta'=O(r^{-2\tau-1}),
\end{equation}
where $\mathcal{C}_i=g_{ij,j}-g_{jj,i}$ is the ADM mass density.
\end{proposition}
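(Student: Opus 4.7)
The plan is to proceed by direct asymptotic computation in the given coordinate chart on $M_\infty$, using the decay hypothesis (\ref{asymflat}) repeatedly to discard quadratic remainders in the metric perturbation $g_{ij}-\delta_{ij}$. Throughout I would use that a Christoffel symbol $\Gamma^{k}_{ij}=\tfrac{1}{2}g^{kl}(g_{il,j}+g_{jl,i}-g_{ij,l})$ is $O(r^{-\tau-1})$, so any product of two Christoffels is $O(r^{-2\tau-2})$ and thus absorbed into $\Theta$. Similarly $g^{ij}=\delta^{ij}+O(r^{-\tau})$ and $g_{ij,kl}=O(r^{-\tau-2})$, so when contracting with derivatives of Christoffels one can freely replace $g^{ij}$ by $\delta^{ij}$ modulo $O(r^{-2\tau-2})$.

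For the scalar curvature expansion, I would start from $R=g^{ij}R_{ij}$ and the local formula $R_{ij}=\partial_k\Gamma^k_{ij}-\partial_j\Gamma^k_{ik}+\Gamma\cdot\Gamma$. By the remarks above,
\[
R=\delta^{ij}\bigl(\partial_k\Gamma^k_{ij}-\partial_j\Gamma^k_{ik}\bigr)+O(r^{-2\tau-2}).
\]
Linearising the Christoffel symbols (i.e.\ replacing $g^{kl}$ by $\delta^{kl}$, which introduces only an $O(r^{-2\tau-1})$ error inside a derivative) and carrying out the two contractions, the right-hand side collapses after cancellations to $g_{ij,ij}-g_{ii,jj}=\partial_i(g_{ij,j}-g_{jj,i})=\mathcal{C}_{i,i}$. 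This yields the first identity with $\Theta=O(r^{-2\tau-2})$.

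For the mean curvature expansion, I would start directly from the local expression (\ref{meancurvloc}) already established in the paper,
\[
H=\tfrac{1}{2}(g^{nn})^{1/2}(2g_{n\alpha,\alpha}-g_{\alpha\alpha,n}).
\]
Since $(g^{nn})^{1/2}=1+O(r^{-\tau})$ and the bracket is $O(r^{-\tau-1})$, replacing $(g^{nn})^{1/2}$ by $1$ costs $O(r^{-2\tau-1})$. It remains to rewrite $2g_{n\alpha,\alpha}-g_{\alpha\alpha,n}$ in terms of $\mathcal{C}$ and $\eta$. Using (\ref{unitnormalexp}) one has $\eta=-\partial_n+O(r^{-\tau})$, hence
\[
-\langle\mathcal{C},\eta\rangle=\mathcal{C}_n+O(r^{-2\tau-1})=g_{nj,j}-g_{jj,n}+O(r^{-2\tau-1}),
\]
and splitting the sums over $j$ into $j=\alpha\le n-1$ and $j=n$ the terms $g_{nn,n}$ cancel, giving $-\langle\mathcal{C},\eta\rangle=g_{n\alpha,\alpha}-g_{\alpha\alpha,n}+O(r^{-2\tau-1})$. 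Adding $g_{n\alpha,\alpha}$ and multiplying by $\tfrac{1}{2}$ recovers exactly the bracket above, so (\ref{asympexpmean}) follows with $\Theta'=O(r^{-2\tau-1})$.

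The only mildly delicate point is bookkeeping of error terms in the scalar curvature step: one must check that every nonlinear contribution (products of Christoffels, the $(g^{ij}-\delta^{ij})$ correction in front of $\partial\Gamma$, and the linearisation of $g^{kl}$ inside $\Gamma$ when differentiated) falls into $O(r^{-2\tau-2})$, using that first derivatives decay one power faster than the metric perturbation itself. The mean curvature computation is then essentially algebraic given the identities (\ref{unitnormalexp})--(\ref{meancurvloc}) derived in the previous section.
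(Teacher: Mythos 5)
Your proposal is correct and follows essentially the same route as the paper: the scalar curvature expansion is exactly the standard computation the paper cites as well-known from Bartnik and Lee--Parker, and your mean curvature derivation is precisely the "follows easily from (\ref{unitnormalexp}) and (\ref{meancurvloc})" step, carried out explicitly (including the cancellation of the $g_{nn,n}$ terms in $\mathcal{C}_n$ and the $O(r^{-2\tau-1})$ bookkeeping). In effect you have supplied the details the paper delegates to the references, with the error estimates handled correctly.
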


\begin{proof}
The expansion (\ref{asymexpscal}) is well-known (see \cite{Ba, LP}). Also, (\ref{asympexpmean}) follows easily from (\ref{unitnormalexp}) and (\ref{meancurvloc}). 
\end{proof}

We now introduce the right functional spaces in order to handle this type of question. Given a complete Riemannian manifold $M$ (with or without boundary), $k\geq 0$ an integer and $\gamma\in\mathbb R$, 
we proceed as in \cite{LP} 
and define the {\em weighted} $C^k$ {\em space} $C^{k}_\gamma(M)$ as the set of $C^k$ functions $u$ on $M$ for which the norm  
\[
\|u\|_{C^k_\gamma(M)}=\sum_{i=0}^k\sup_M r^{-\gamma+i}|\nabla^iu|
\]
is finite. Moreover, if $0<\alpha<1$ we define the {\em weighted H\"older space} $C^{k,\alpha}_{\gamma}$ as the set of functions $u\in C^{k}_{\gamma}(M)$ such that   
the norm 
\[
\|u\|_{C^{k,\alpha}_\gamma(M)}=\|u\|_{C^k_\gamma(M)}+
\sup_{x,y}\left(\min r(x),r(y)\right)^{-\gamma+k+\alpha}\frac{|\nabla^ku(x)-\nabla^ku(y)|}{|x-y|^\alpha}
\]
is finite. Here, the supremum is over all $x\neq y$ such that $y$ is contained in a normal coordinate neighborhood of $x$, and $\nabla^ku(y)$ is the tensor at $x$ obtained by the pararel transport along the radial geodesic from $x$ to $y$.

We also define the {\em{weighted Lebesgue space}} $L^{q}_{0,\b}(M)$, $q\geq 1$, $\b\in \R$, as the set of locally integrable functions $u$  for which the norm
$$
\|u\|_{q,0,\b}=\left(\int_M|r^{-\b}u|^qr^{-n}dM_g\right)^{\frac{1}{q}}
$$
is finite. For $k\geq 0$ an integer, $q\geq 1$ and $\b\in \R$, we define the {\em{weighted  Sobolev  space}} $L^{q}_{k,\b}(M)$ to be the set of $u$ for which $|\nabla^iu|\in L^{q}_{0,\b-i}(M)$ for $i=0, 1, ..., k$, with the norm
$$
\|u\|_{q,k,\b}=\sum_{i=0}^{k}\|\nabla^i u\|_{q,0,\b-i}.
$$
Notice that for $\beta=-n/q$ we recover the standard Sobolev spaces, denoted simply by $L^q_k(M)$.

It is easy to check that these are Banach spaces whose underlying topologies do not depend on the involved choices. 
As stated in \cite{LP} for manifolds without boundary, the following weighted Sobolev lemma also holds in our context.
\begin{proposition}\label{Sobolev:lemma}
Let $q>1$,  $l-k-\a>n/q$ and $\e>0$. Then there are continuous embeddings $C^{l,\a}_{\b-\e}(M)\subset L^{q}_{l,\b}(M)\subset C^{k,\a}_{\b}(M)$. 
\end{proposition}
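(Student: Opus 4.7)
The plan is to adapt the standard proof of the weighted Sobolev embedding for asymptotically flat manifolds without boundary (as in \cite{Ba, LP}) to our setting. The strategy has two ingredients: on the compact piece $K\subset M$ the weights $r^{-\beta}$ are bounded above and below, so the weighted norms are equivalent to the usual ones and the classical Sobolev and Hölder embeddings on bounded domains with smooth boundary apply directly; on the end $M_\infty$, identified via $\Psi$ with $\R^n_+\setminus \overline{B}_1^+(0)$, I would decompose into half-annular shells $A_R^+=\{x\in \R^n_+:\, R\leq |x|\leq 2R\}$ for $R=2^k$, $k\geq 0$, and use a rescaling argument between each $A_R^+$ and the fixed reference half-annulus $A_1^+$.

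For the first inclusion $C^{l,\alpha}_{\beta-\ve}(M)\subset L^q_{l,\beta}(M)$, the argument is direct: if $u\in C^{l,\alpha}_{\beta-\ve}(M)$ then by definition $|\nabla^i u|\leq C\, r^{\beta-\ve -i}$ for $0\leq i\leq l$, so
\[
\|\nabla^i u\|^q_{q,0,\beta-i}=\int_M\bigl|r^{-\beta+i}\nabla^i u\bigr|^q\, r^{-n}\, dM_g\leq C\int_M r^{-\ve q-n}\, dM_g,
\]
and the last integral splits as a finite piece over $K$ plus $\int_1^{+\infty} r^{-\ve q-1}dr<\infty$ on the end, which converges precisely because $\ve>0$. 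Summing in $i$ gives $\|u\|_{q,l,\beta}\leq C\|u\|_{C^{l,\alpha}_{\beta-\ve}(M)}$.

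For the second inclusion $L^q_{l,\beta}(M)\subset C^{k,\alpha}_{\beta}(M)$, rescale $u$ on $A_R^+$ by $\tilde u(y)=u(Ry)$ on $A_1^+$. The chain rule and change of variables give
\[
\|\nabla^i \tilde u\|_{L^q(A_1^+)}=R^{i-n/q}\|\nabla^i u\|_{L^q(A_R^+)},\qquad \|\nabla^i \tilde u\|_{C^0(A_1^+)}=R^i\|\nabla^i u\|_{C^0(A_R^+)},
\]
together with the analogous scaling for the $\alpha$-Hölder seminorm of $\nabla^k \tilde u$. Since the asymptotic expansion \eqref{asymflat} shows that $g$ is uniformly equivalent to the flat metric on each $A_R^+$ with derivatives controlled by $R^{-\tau}\to 0$, the Euclidean and $g$-norms on $A_R^+$ are comparable with constants independent of $R$ once $R$ is large. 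Applying the classical Sobolev embedding $W^{l,q}(A_1^+)\hookrightarrow C^{k,\alpha}(A_1^+)$ (valid for $l-k-\alpha>n/q$ since $A_1^+$ is a bounded domain with smooth boundary) to $\tilde u$ and unwinding the scaling yields
\[
\sum_{i=0}^k R^{i-\beta}\|\nabla^i u\|_{C^0(A_R^+)}+R^{k+\alpha-\beta}[\nabla^k u]_{\alpha;A_R^+}\leq C\,\|u\|_{q,l,\beta;\,\tilde A_R^+},
\]
where $\tilde A_R^+$ denotes the slightly enlarged shell $A_{R/2}^+\cup A_R^+\cup A_{2R}^+$ used to accommodate Hölder difference quotients across the radii $|x|=R$ and $|x|=2R$. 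Taking the supremum over $R=2^k$ and combining with the estimate on $K$ gives $\|u\|_{C^{k,\alpha}_\beta(M)}\leq C\|u\|_{q,l,\beta}$.

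The main technical point to be careful about is the treatment of the boundary: the half-annulus $A_1^+$ has a piece of flat boundary $\partial\R^n_+\cap A_1^+$, and one must invoke the extension/Sobolev embedding theorem for Lipschitz domains (which applies here since $A_1^+$ is in fact smooth away from the two corners along $|x|\in\{1,2\}\cap\partial\R^n_+$, and those corners still satisfy the cone condition needed for $W^{l,q}\hookrightarrow C^{k,\alpha}$). One must also verify that the overlap of the enlarged shells $\tilde A_R^+$ is uniformly bounded so that summing contributions in $R$ does not introduce a divergent constant—this is automatic for the dyadic choice $R=2^k$.
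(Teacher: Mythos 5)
Your proof is correct, but note that the paper itself offers no argument for this proposition: it simply asserts that the weighted Sobolev lemma of Lee--Parker and Bartnik ``also holds in our context'' and moves on, so what you have written is precisely the standard proof that the authors implicitly invoke, adapted to the half-space end. The two ingredients you use are the right ones: the first inclusion is the direct integration you carry out (the weight $r^{-\ve q-1}$ being integrable at infinity exactly because $\ve>0$), and the second is the dyadic rescaling of half-annuli $A_R^+$ to a fixed reference half-annulus, where the classical embedding $W^{l,q}\hookrightarrow C^{k,\alpha}$ for $l-k-\alpha>n/q$ applies since $A_1^+$ is a Lipschitz domain; your bookkeeping of the scaling factors and of the uniform (in $R$) equivalence of the metric $g$ with $\delta$ coming from (\ref{asymflat}) is accurate. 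Two small points are worth recording to make the argument airtight. First, the weighted H\"older seminorm in the paper's definition is a supremum over pairs $x\neq y$ with $y$ in a normal neighborhood of $x$, and such pairs need not lie in the same or adjacent dyadic shells; but when $|x-y|\gtrsim \min(r(x),r(y))$ the difference quotient is trivially bounded by $C\,(\min r)^{-\alpha}\bigl(|\nabla^k u(x)|+|\nabla^k u(y)|\bigr)$, which the already-established weighted $C^k$ bound controls, so only the adjacent-shell case treated by your enlarged shells $\tilde A_R^+$ requires the rescaling. Second, converting between $g$-covariant derivatives $\nabla^i$ and flat derivatives uniformly in $R$ uses derivatives of $g$ up to order roughly $l-1$, while (\ref{asymflat}) only controls two derivatives; this is harmless for the instances actually used in the paper ($l\leq 2$), but for general $l$ one should either assume correspondingly more decay on higher derivatives of $g$ or interpret the spaces via coordinate derivatives on the end, as is customary.
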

After fixing asymptotically flat coordinates on the end $M_{\infty}$ we consider, for each $\tau>0$, the space $\Mc_\tau$ of all metrics on $M$ so that 
\[
g-\delta\in C^{1,\alpha}_{-\tau}(M_{\infty}), 
\quad R\in L^1(M), \quad H\in L^1(\Sigma).
\] 
If we fix a background metric $g_0$ and write $g=g_0+b$, it is clear from Proposition \ref{expasymreg} that we can identify $\Mc_\tau$ to a subset of the affine space  
\[
\{g_0+b; b_{ij,ij}-b_{ii,jj}\in L^1(M_{\infty}), b_{\alpha\alpha,n}\in L^1(\Sigma\cap M_{\infty})\}.
\]
In the topology induced by this identification we have $g_k\to g$ if and only if 
\[
\|g_k-g\|_{C^{1,\alpha}_{-\tau}(M)}
\to 0
\]
and 
\[
\|R_{g_k}-R_g\|_{L^1(M)}+\|H_{g_k}-H_g\|_{L^1(\Sigma)}\to 0.
\]

The following proposition describes the main technical result on weighted H\"older spaces needed in this work.
Its proof is postponed to Appendix \ref{prooftech1}.
\begin{proposition}\label{isomorphism}
Let $(M,g)$ be an asymptotically flat manifold $(M,g)$ with $g\in \Mc_\tau$, $\tau>0$, and with a nonempty boundary $\Sigma$. Fix  $2-n<\gamma<0$ and let $T:C^{2,\a}_{\gamma}(M)\to C^{0,\a}_{\gamma-2}(M)\times C^{1,\a}_{\gamma-1}(\Sigma)$
be given by
$$
T(u)=\left(-\Delta_gu+hu, \d u/\d\eta+\bar{h}u\right), 
$$ 
where 
$\Delta_g$ is the Laplacian, $\eta$ is the outward unit normal to $\Sigma$,
$h\in C^{0,\a}_{-2-\e}(M)$ and $\bar{h}\in C^{1,\a}_{-1-\e}(\Sigma)$, for some $\e>0$ small.  
If $h\geq0$ and $\bar{h}\geq 0$ then $T$ is an isomorphism.
\end{proposition}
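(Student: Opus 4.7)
The plan is to prove the proposition in two stages: first establish injectivity of $T$ using the sign assumptions on $h$ and $\bar h$, and then show that $T$ is Fredholm of index zero, so that surjectivity follows automatically.

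For injectivity, suppose $Tu=0$ with $u\in C^{2,\alpha}_\gamma(M)$ and $\gamma<0$. Since $u=O(r^\gamma)\to 0$ at infinity, both $\sup u$ and $\inf u$ are attained on $\overline{M}$. If $\sup u>0$, the strong maximum principle for $L=-\Delta_g+h$ with $h\geq 0$ forbids $u$ from attaining its positive maximum at an interior point unless $u$ is constant, contradicting the decay. If the maximum is attained at a point $p\in\Sigma$, the Hopf boundary point lemma forces $\partial u/\partial\eta(p)>0$, but the Robin condition gives $\partial u/\partial\eta(p)=-\bar h(p)u(p)\leq 0$, a contradiction. Applying the same argument to $-u$ rules out a negative infimum, so $u\equiv 0$.

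For Fredholmness with index zero, the strategy is the classical one of Lockhart--McOwen (cast in H\"older scales as in \cite{Ba, LP}), extended to incorporate the Robin boundary condition. The model operator at infinity is the flat Laplacian on $\R^n_+$ with homogeneous Neumann boundary condition; by even reflection across $\partial\R^n_+$ this reduces to $-\Delta$ on $\R^n$, for which convolution with the fundamental solution shows that $-\Delta:C^{2,\alpha}_\gamma(\R^n)\to C^{0,\alpha}_{\gamma-2}(\R^n)$ is an isomorphism in the non-exceptional weight range $\gamma\in(2-n,0)$. I would then transfer this to $M$ by combining interior and boundary Schauder estimates on a large compact set with scaled Schauder estimates on dyadic half-annuli $A_R=\{R\leq|x|\leq 2R\}\cap\R^n_+$ of the asymptotic end, glued by a partition of unity to build a parametrix $P$ for $T$ satisfying $PT=I+K_1$ and $TP=I+K_2$ with $K_1,K_2$ compact. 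This yields a global weighted a priori estimate
$$
\|u\|_{C^{2,\alpha}_\gamma(M)}\le C\left(\|Tu\|_{C^{0,\alpha}_{\gamma-2}(M)\times C^{1,\alpha}_{\gamma-1}(\Sigma)}+\|u\|_{C^0(K)}\right)
$$
for some compact $K\subset M$, giving semi-Fredholmness; the formal adjoint of $T$ has the same structure and satisfies an analogous estimate, so $T$ is Fredholm. To pin down the index, I would homotope $(h,\bar h)$ to $(0,0)$ through the cone of non-negative pairs and invoke stability of the Fredholm index to reduce to $T_0=(-\Delta_g,\partial/\partial\eta)$; the index of $T_0$ equals that of the Neumann model on $\R^n_+$, which by the reflection trick coincides with the index of $-\Delta$ on $\R^n$ at weight $\gamma\in(2-n,0)$---and this is zero.

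The main technical obstacle is the boundary analysis in the weighted setting: one cannot simply cite Lockhart--McOwen, whose framework is stated for manifolds without boundary. The delicate step is verifying the scaled Schauder estimate for the Robin operator on half-annuli with constants independent of the scaling radius $R$, then summing these localized estimates dyadically to produce the global weighted estimate above. The reflection trick handles the pure Neumann case cleanly; for the Robin correction, the hypothesis $\bar h\in C^{1,\alpha}_{-1-\e}(\Sigma)$ ensures $r\bar h\to 0$, so the Robin term is a subcritical perturbation of the Neumann model and does not shift the indicial roots that determine the exceptional weights. Once Fredholmness with index zero is in place, the trivial kernel from the first step yields the desired isomorphism.
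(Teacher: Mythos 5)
Your injectivity step is sound and coincides with the paper's (maximum principle plus the Hopf lemma at a boundary maximum, using $h\geq 0$, $\bar h\geq 0$ and the decay forced by $\gamma<0$), and the overall plan ``injective $+$ Fredholm of index zero $\Rightarrow$ isomorphism'' is a legitimate alternative to the paper's argument. The weighted scaled Schauder estimates on dyadic half-annuli that you flag as the delicate point are indeed carried out in the paper (Lemmas \ref{lemma1}--\ref{lemma3} of the appendix), and your observation that the Robin terms are subcritical is correct: since $h\in C^{0,\a}_{-2-\e}$ and $\bar h\in C^{1,\a}_{-1-\e}$, multiplication by $h$ or $\bar h$ is in fact a \emph{compact} perturbation of $(-\Delta_g,\partial/\partial\eta)$ on these weighted spaces, so the homotopy in $(h,\bar h)$ is unproblematic.

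The genuine gap is the anchor of your index computation: the assertion that ``the index of $T_0=(-\Delta_g,\partial/\partial\eta)$ on $M$ equals that of the Neumann model on $\mathbb{R}^n_+$'' is not justified and is not a general principle. For weighted elliptic problems on manifolds with ends, the asymptotic model (indicial roots) governs \emph{for which weights} the operator is Fredholm and \emph{how the index jumps} across exceptional weights, but the index at a fixed weight is a global quantity; $M$ is not diffeomorphic to $\mathbb{R}^n_+$, and passing from the model to $M$ requires either a relative-index/excision argument or a duality computation, neither of which you supply. The standard repair is duality: identify the cokernel of $T_0$ at weight $\gamma$ with the kernel of the adjoint (again a Neumann-type) problem at the dual weight $2-n-\gamma\in(2-n,0)$ and kill it by the same maximum-principle argument — but this pairing is awkward in H\"older scales and is normally run in weighted Sobolev spaces (as in Bartnik) before transferring back by Schauder regularity. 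The paper avoids index theory altogether: it proves the a priori estimate $\|u\|_{C^{2,\a}_\gamma}\leq C\|Lu\|_{C^{0,\a}_{\gamma-2}}+C\|Bu\|_{C^{1,\a}_{\gamma-1}}$ \emph{for injective} $T$ (Lemma \ref{lemma3}), establishes surjectivity directly for an anchor operator with $g=\delta$ outside a compact set by conformally compactifying the end through the inversion $x\mapsto x/|x|^2$ and solving on the resulting compact manifold with boundary (Lemma \ref{lemma4}), and then reaches the general $T$ by the method of continuity (openness via the implicit function theorem, closedness via the a priori estimate). To make your proposal complete you must either carry out the duality/cokernel identification at the dual weight or replace your index anchor by a direct surjectivity proof for a model-like operator in the spirit of Lemma \ref{lemma4}.
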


We can use standard interpolation methods to define $L^q_{k,\beta}(\Sigma)$ for any $k\in\mathbb R_+$. In particular, the restriction map
\[
u\in C_c^{\infty}(M)\mapsto (u, \partial u/\partial \eta )\in C_c^\infty(\Sigma)\times C_c^\infty(\Sigma)
\] 
extends continuously to the so-called {\em trace map}
\[
\mathfrak T:L^q_{2,\beta}(M)\to L^q_{2-1/q,\beta}(\Sigma)\times L^q_{1-1/q,\beta}(\Sigma),
\]
which is known to be surjective. 
Hence, it makes sense to consider the subspaces of $L^q_{2,\beta}(M)$ consisting of functions satisfying Neumann and Dirichlet boundary conditions, namely, 
\[
W_N=\{u\in L^{q}_{2,\b}(M);\:\d u/\d \eta=0\,\:\text{on}\:\Sigma\}
\]
and 
\[
W_D=\{u\in L^{q}_{2,\b}(M);\:u=0\,\:\text{on}\:\Sigma\}.
\]
\begin{proposition}\label{Sobolev:N}
Consider 
$\Delta_g:W_N\to  L^{q}_{0,\b-2}(M)$. Then\\
(a) $\Delta_g$ is an isomorphism if and only if $2-n<\b<0$;\\
(b) $\Delta_g$ is injective if $0>\b\notin \mathbb{Z}$;\\
(c) $\Delta_g$ is surjective if $2-n<\b\notin \mathbb{Z}$.
\end{proposition}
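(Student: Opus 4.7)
The plan is to reduce the Neumann problem on $(M,g)$ to the well-known Fredholm theory for the Laplacian on asymptotically flat manifolds without boundary \cite{Ba, LP} via a \emph{doubling construction across $\Sigma$}. Let $(\tilde M,\tilde g)$ be the manifold obtained by doubling $(M,g)$ along $\Sigma$, with $\tilde g$ defined by reflection. Since the model at infinity of $M$ is the half-space $\R^n_+$ and its double is $\R^n$, the space $(\tilde M,\tilde g)$ is asymptotically flat in the classical sense with the same decay rate $\tau$; the metric $\tilde g$ is $C^{0,\a}$ but typically fails to be $C^1$ along $\Sigma$. For $u\in W_N$, the Neumann condition is precisely what guarantees that the even reflection $\tilde u$ satisfies $\Delta_{\tilde g}\tilde u=\widetilde{\Delta_g u}$ in the distributional sense on all of $\tilde M$, with no singular contribution supported along $\Sigma$; moreover, the weighted norms of $u$ and $\tilde u$ agree up to a factor of two.

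Once this correspondence is established, the three assertions follow by transferring the classical results. For \textbf{(b)}, if $\Delta_g u=0$ with $\beta<0$, $\beta\notin\mathbb{Z}$, then $\tilde u$ is weakly $\tilde g$-harmonic in $L^q_{2,\beta}(\tilde M)$, and the classical theory on $\tilde M$ forces $\tilde u\equiv 0$; alternatively, one integrates by parts on the exhaustion $M_r$, using the Neumann condition to discard the $\Sigma_r$ boundary term and the decay of $u$ and $\nabla u$ to kill the $\Sc_{r,+}^{n-1}$ term, reaching $\int_M|\nabla u|_g^2 dM_g=0$. For \textbf{(c)}, given $f\in L^q_{0,\beta-2}(M)$ with $2-n<\beta\notin\mathbb{Z}$, extend $f$ evenly to $\tilde f\in L^q_{0,\beta-2}(\tilde M)$ and apply the classical surjectivity result to obtain $\tilde u\in L^q_{2,\beta}(\tilde M)$ solving $\Delta_{\tilde g}\tilde u=\tilde f$. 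The odd part of $\tilde u$ is $\tilde g$-harmonic of rate $\beta$; since the classical injectivity statement applies for all $\beta<0$, $\beta\notin\mathbb{Z}$, this odd part vanishes, so $\tilde u$ is itself even and its restriction to $M$ gives $u\in W_N$ with $\Delta_g u=f$. Finally, \textbf{(a)} follows by combining (b) and (c) on the common range $2-n<\beta<0$.

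The main obstacle is the low regularity of $\tilde g$ across $\Sigma$, which requires justifying both the identity $\Delta_{\tilde g}\tilde u=\widetilde{\Delta_g u}$ in the distributional sense and the applicability of the classical Fredholm theory at $C^{0,\a}$ regularity. The first reduces to an integration by parts against test functions on $\tilde M$: the symmetry of $\tilde g$ under the reflection together with the vanishing of $\partial u/\partial\eta$ ensures that no interface contribution arises from the hypersurface $\Sigma$. The second is available because the theorems in \cite{Ba, LP} are formulated under weighted H\"older hypotheses on the metric, which are satisfied by $\tilde g$. The indicial roots of the flat Laplacian on $\R^n$ are the integers, and this is exactly the source of the exclusion $\beta\notin\mathbb{Z}$ in the statement.
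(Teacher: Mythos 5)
Your overall route coincides with the paper's: the paper also proves Propositions \ref{Sobolev:N} and \ref{Sobolev:D} by doubling $(M,g)$ along $\Sigma$ and reducing to \cite[Proposition 2.2]{Ba} via reflection of functions, and your observation that the Neumann condition is exactly what prevents a singular contribution along $\Sigma$ in $\Delta_{\widetilde g}\widetilde u=\widetilde{\Delta_g u}$ is the key point the paper leaves to the reader.

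There is, however, a genuine flaw in your proof of (c). You kill the odd part of $\widetilde u$ by invoking ``classical injectivity for all $\beta<0$'', but in (c) the weight only satisfies $2-n<\beta\notin\mathbb Z$ and may be positive; indeed this is precisely the case the paper needs ($\beta=-\tau+1+\epsilon$, which is positive when $n=3$). For $\beta>1$ your conclusion that ``$\widetilde u$ is itself even'' is simply false: the kernel of $\Delta_{\widetilde g}$ on $L^q_{2,\beta}(\widetilde M)$ contains nonzero \emph{odd} harmonic functions, e.g.\ the function $\widetilde x'_n$ asymptotic to $x_n$ constructed in Propositions \ref{exisharmcoord} and \ref{relatcoordharm}, so a solution of $\Delta_{\widetilde g}\widetilde u=\widetilde f$ need not be even; for $0<\beta<1$ the odd part does vanish, but only after identifying the kernel with asymptotically constant functions, an argument you did not supply. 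The repair is immediate and is what the reflection argument should be: since $\widetilde g$ and $\widetilde f$ are invariant under the reflection $\sigma$, replace $\widetilde u$ by its even part $\tfrac12(\widetilde u+\widetilde u\circ\sigma)$, which still solves the equation and restricts to an element of $W_N$ with $\Delta_g u=f$. Two smaller points: your ``alternative'' integration-by-parts proof of (b) does not work as stated when $(2-n)/2\le\beta<0$, because the flux term over $\Sc_{r,+}^{n-1}$ is only $O(r^{n-2+2\beta})$; one needs the improved decay of harmonic functions first (your primary argument, transferring injectivity to $\widetilde M$, is fine). And combining (b) and (c) only gives the ``if'' half of (a); the ``only if'' half (non-injectivity for $\beta>0$, e.g.\ by constants, and non-surjectivity for $\beta\le 2-n$) is not addressed, though it too transfers from \cite[Proposition 2.2]{Ba}.
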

\begin{proposition}\label{Sobolev:D}
Consider $\Delta_g:W_D\to  L^{q}_{0,\b-2}(M)$. Then\\
(a) $\Delta_g$ is an isomorphism if and only if $2-n<\b<0$;\\
(b) $\Delta_g$ is injective if $0>\b\notin \mathbb{Z}$;\\
(c) $\Delta_g$ is surjective if $2-n<\b\notin \mathbb{Z}$.
\end{proposition}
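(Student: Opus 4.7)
The plan is to prove (a)--(c) in close parallel with the Neumann case, Proposition \ref{Sobolev:N}, by reducing the Dirichlet problem on $(M,g)$ to the classical Bartnik-Lockhart-McOwen theorem on an asymptotically flat manifold without boundary, using \emph{odd} reflection across $\Sigma$ in place of the even reflection appropriate to Neumann data.

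First, I would form the doubled manifold $\widetilde M = M \cup_\Sigma M$ equipped with the reflection metric $\widetilde g$, and denote by $\iota:\widetilde M\to\widetilde M$ the swap involution. Since $g\in\Mc_\tau$ is $C^{1,\a}$ up to $\Sigma$, the metric $\widetilde g$ is $C^{1,\a}$ away from $\Sigma$ and at least Lipschitz across it, so $(\widetilde M,\widetilde g)$ is asymptotically flat with two identical ends; the weighted Sobolev spaces $L^q_{k,\b}(\widetilde M)$ are then well-defined and compatible with those on each copy of $M$.

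Next, given $u\in W_D$, let $\widetilde u$ denote the odd extension of $u$, namely $\widetilde u = u$ on one copy and $\widetilde u = -u\circ\iota$ on the other. Because $u|_\Sigma=0$, the extension $\widetilde u$ is continuous across $\Sigma$; a routine computation using the trace characterization shows $\widetilde u\in L^q_{2,\b}(\widetilde M)$ with $\|\widetilde u\|_{q,2,\b}\sim 2^{1/q}\|u\|_{q,2,\b}$ and $\Delta_{\widetilde g}\widetilde u$ equal a.e.\ to the odd extension of $\Delta_g u$. Hence odd reflection identifies $(W_D,\Delta_g)$ with the restriction of $\Delta_{\widetilde g}$ to the closed subspace of $\iota$-antisymmetric elements in $L^q_{2,\b}(\widetilde M)$, mapping into the subspace of $\iota$-antisymmetric elements in $L^q_{0,\b-2}(\widetilde M)$.

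Finally, I would invoke the classical isomorphism theorem on the asymptotically flat manifold $\widetilde M$, which yields the exact analogues of (a)--(c) for $\Delta_{\widetilde g}:L^q_{2,\b}(\widetilde M)\to L^q_{0,\b-2}(\widetilde M)$. Since $\iota$ is an isometry of $\widetilde g$, the induced $\mathbb Z_2$-action commutes with $\Delta_{\widetilde g}$ and decomposes both spaces orthogonally into even and odd summands, each preserved by the operator; restricting to the odd summand yields the desired Dirichlet statement. The main obstacle I anticipate is the borderline regularity of $\widetilde g$ at the seam $\Sigma$: gluing two copies of a $C^{1,\a}$ metric typically produces only a Lipschitz metric there, so one must check that the classical theorem extends to this regularity class. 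This can be done by recasting $\Delta_{\widetilde g}$ in divergence form and invoking $W^{2,q}$ estimates for elliptic operators with Lipschitz coefficients (Calder\'on-Zygmund), or, alternatively, by bypassing doubling altogether: one adapts the proof of Proposition \ref{isomorphism} to Dirichlet data for the base isomorphism (a), and then runs an indicial-root analysis of the Laplacian on $\mathbb R^n_+$ with Dirichlet boundary condition to obtain (b) and (c). The relevant indicial roots in the half-space model are integers, which accounts precisely for the exceptional set $\mathbb Z$ appearing in the statements.
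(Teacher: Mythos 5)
Your proposal is essentially the paper's own proof: the authors also form the double $(\widetilde M,\widetilde g)$ along $\Sigma$, note that despite the non-smoothness of $\widetilde g$ at the seam it still satisfies the hypotheses of Bartnik's asymptotically flat setting, and deduce both the Neumann and Dirichlet statements from Bartnik's isomorphism theorem via reflection (odd reflection in the Dirichlet case), exactly as you describe. Your discussion of the Lipschitz-seam regularity issue is the one point the paper handles simply by invoking the weak regularity allowed in Bartnik's Definition 2.1, so no genuinely different route is involved.
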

\bp[Proofs of Propositions \ref{Sobolev:N} and \ref{Sobolev:D}]
We consider the the double $(\widetilde{M},\widetilde{g})$ of $(M,g)$ along $\Sigma$ defined by  $\widetilde{M}=M\times \{0,1\}\slash\thicksim$, where $(y,0)\thicksim (y,1)$ for all $y\in\Sigma$, and $\widetilde{g}(y,j)=g(y)$ for all $y\in M$ and $j=0,1$.  Although $\widetilde{g}$ is not smooth on $\widetilde{M}$, it satisfies the hypotheses  in \cite[Definition 2.1]{Ba}. Then both proofs follow from  \cite[Proposition 2.2]{Ba} by means of reflection arguments of functions on $M$. The details are left to the reader.
\ep
\begin{remark}
\label{remsurj}
{
\rm  Suppose $\b\notin \mathbb{Z}$, $\b>2-n$. As a consequence of Proposition \ref{Sobolev:N}, standard arguments show that the Neumann problem
\begin{equation}\label{sistSobolev}
\begin{cases}
\Delta_gu=f\,,&\text{in}\:M\,,
\\
\displaystyle\frac{\d u}{\d \eta}= \bar{f}\,,&\text{on}\:\Sigma\,,
\end{cases}
\end{equation}
has a solution $u\in L^{q}_{2,\b}(M)$ for any $f\in  L^{q}_{0,\b-2}(M)$ and $\bar{f}\in L^{q}_{1-1/q,\b}(\Sigma)$. 
In fact, we can solve the cases $f\equiv 0$ and $\bar{f}\equiv 0$ separately. The latter case follows directly from Proposition \ref{Sobolev:N}. 
In order to solve the case $f\equiv 0$, we choose $\phi\in L^{q}_{2,\b}(M)$ such that $\d\phi/\d\eta=\bar{f}$. Then we use Proposition \ref{Sobolev:N} to find $\psi\in W_N$ satisfying $\Delta_g \psi=-\Delta_g\phi\in L^{q}_{0,\b-2}$. Thus, $u=\psi+\phi$ is a solution to (\ref{sistSobolev}) when $f\equiv 0$.
A similar result holds for the Dirichlet problem in (\ref{sistSobolev}) as a consequence of Proposition \ref{Sobolev:D}.
}
\end{remark}

The geometric invariance of the mass is described in the next proposition. 

\begin{proposition}
\label{massinv}
If $(M,g)$ is asymptotically flat with $g\in\Mc_\tau$, $\tau>(n-2)/2$, then the  mass $\mathfrak m_{(M,g)}$ only depends on the metric $g$. Moreover, this dependence is smooth with respect to the topology on $\Mc_{\tau}$ described above. 
\end{proposition}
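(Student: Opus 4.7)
The proof follows the classical approach used for the ADM mass \cite{Ba,LP}, with integration by parts along $\Sigma$ accounting for the new corner term in (\ref{massadm}). The first step is to rewrite $\mathfrak m_{(M,g)}$ in a form that isolates its manifestly geometric pieces. Applying the Euclidean divergence theorem to the vector field $\mathcal{C}=(g_{ij,j}-g_{jj,i})\partial_i$ on the asymptotic half-annulus $\Omega_r=\Psi^{-1}(\Rn\cap\{1\le|x|\le r\})$, and using on the flat piece of the boundary the identity
$$
\int_{\Sigma_r\setminus\Sigma_1}g_{n\alpha,\alpha}\,dx'=\int_{\Sc_r^{n-2}} g_{\alpha n}\vartheta^\alpha-\int_{\Sc_1^{n-2}}g_{\alpha n}\vartheta^\alpha,
$$
Proposition \ref{expasymreg} yields, in the limit $r\to\infty$,
\begin{equation}\label{massformgeo}
\mathfrak m_{(M,g)}=\mathfrak C_\Psi(g)+\int_M R_g\,dM_g+2\int_\Sigma H_g\,d\Sigma_h+E(g),
\end{equation}
where $\mathfrak C_\Psi(g)$ collects the finite boundary integrals over $\Sc_{1,+}^{n-1}\cup \Sc_1^{n-2}$ together with the contribution from the compact core $K$, and $E(g)=\int_{M_\infty}\Theta+2\int_{\Sigma\cap M_\infty}\Theta'$ is absolutely convergent thanks to $\tau>(n-2)/2$.

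For coordinate invariance, suppose $\Psi,\Psi'$ are two asymptotic charts for the same end. A Bartnik-type analysis, together with the fact that both charts send $\Sigma$ to $\{x_n=0\}$, forces the transition $\Phi=\Psi'\circ\Psi^{-1}$ to be asymptotic to a rigid motion of $\Rn$ of the form $x\mapsto Ax+b$ with $A\in O(n-1)$ acting on the first $n-1$ coordinates and $b\in\R^{n-1}\times\{0\}$, modulo a perturbation $v=O^{2,\alpha}(|x|^{1-\tau})$. Both integrands in (\ref{massadm}) are manifestly tensorial under such rigid motions, so everything reduces to invariance under $\mathrm{Id}+v$. Expanding $\widetilde g_{ij}=g_{ij}+v_{i,j}+v_{j,i}+\ldots$, the leading discrepancies in $\widetilde{\mathcal{C}}_i\widetilde\mu^i$ and $\widetilde g_{\alpha n}\widetilde\vartheta^\alpha$ take the form of Euclidean divergences on $\Sc_{r,+}^{n-1}$ and on $\Sigma_r\setminus\Sigma_1$ respectively; Stokes' theorem then produces two corner integrals on $\Sc_r^{n-2}$ which cancel exactly, while the residual higher-order contributions decay fast enough to disappear in the limit.

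Smooth dependence on $g\in\Mc_\tau$ is immediate from (\ref{massformgeo}): the bulk integrals $\int_M R_g\,dM_g$ and $\int_\Sigma H_g\,d\Sigma_h$ are continuous linear functionals in the very topology defining $\Mc_\tau$; the finite piece $\mathfrak C_\Psi(g)$ depends smoothly on the first jet of $g$ over a fixed compact set, hence on $g$ in $C^{1,\alpha}_{-\tau}$; and $E(g)$ is controlled quadratically by $g-\delta$ in $C^{1,\alpha}_{-\tau}$ against the integrable weight $r^{-2\tau-2}$, giving smoothness by dominated convergence. The main obstacle is the last step of coordinate invariance: in the closed-boundary setting of \cite{Ba} there is no corner contribution, and all Stokes corrections live on $\Sc_r^{n-1}$. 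Here one must track separately the Stokes corrections generated by integration by parts on $\Sc_{r,+}^{n-1}$ and on $\Sigma_r\setminus\Sigma_1$, and verify that the corner term $g_{\alpha n}\vartheta^\alpha$ in (\ref{massadm}) is precisely what is needed to annihilate the combined codimension-two contribution on $\Sc_r^{n-2}$, with the correct signs for the conormals $\mu$, $\eta$, and $\vartheta$.
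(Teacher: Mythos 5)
Your outline parallels the paper's own strategy: the limit in (\ref{massadm}) exists because $R\in L^1(M)$ and $H\in L^1(\Sigma)$ after integrating the expansions of Proposition \ref{expasymreg}; chart invariance is reduced to transition maps that are boundary-preserving rigid motions up to a decaying perturbation; and the correction to the hemisphere integrand is an exact $(n-2)$-form whose contribution on $\Sc_r^{n-2}$ is cancelled by the change of the corner integrand $g_{\alpha n}\vartheta^\alpha$ --- this is exactly the ``simple but curious cancellation'' plus corner compensation carried out in Section \ref{massgeo}. Your smoothness discussion is acceptable in spirit (the paper argues more directly: continuity for a fixed chart via a cutoff of the density $\mathcal C$, plus affine dependence on $g$), though the claim that $E(g)$ is ``controlled quadratically by $g-\delta$ in $C^{1,\alpha}_{-\tau}$'' needs care: $\Theta$ and $\Theta'$ contain terms of the form $(g-\delta)\ast\partial^2 g$, which the $C^{1,\alpha}_{-\tau}$ topology does not control; one must integrate by parts or argue as the paper does.

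The genuine gap is the sentence asserting that ``a Bartnik-type analysis \dots forces'' the transition map to be asymptotic to a rigid motion of $\Rn$ with $A\in O(n-1)$, $b_n=0$, modulo a perturbation of size $O(|x|^{1-\tau})$. In the half-space setting this is precisely the hard, new step, and it cannot simply be quoted from \cite{Ba}: Bartnik's rigidity at infinity is proved on $\mathbb R^n$ minus a ball using harmonic coordinates and Fredholm theory for $\Delta_g$ on weighted spaces \emph{without} boundary. Here one needs (i) solvability of mixed boundary value problems for $\Delta_g$ in weighted spaces on a manifold with non-compact boundary --- the content of Proposition \ref{isomorphism} (proved in the appendix) and of Propositions \ref{Sobolev:N} and \ref{Sobolev:D} via doubling --- in order to construct harmonic coordinates with $\partial x'_\alpha/\partial\eta=0$ and $x'_n=0$ on $\Sigma$ asymptotic to the given chart (Proposition \ref{exisharmcoord}, where the case $n=3$ requires the extra weighted Sobolev step); and (ii) the fact that two such adapted harmonic systems differ by a rigid motion with $Q^n_\alpha=Q^\alpha_n=a_n=0$, which the paper extracts from the structure of $\ker\Delta_{\widetilde g}$ on the doubled manifold (Proposition \ref{relatcoordharm}). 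Without some argument of this kind, the decay and the block structure of the asymptotic rigid motion --- on which your entire cancellation on $\Sc_r^{n-2}$ rests --- are unproved; supplying them is where most of the work of Section \ref{massgeo} and the appendix actually lies.
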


The rest of this section is devoted to the proof of Proposition \ref{massinv}.
The first step  is to integrate  (\ref{asymexpscal}) over the region $M_{r,r'}$ determined by two coordinates hemispheres, say $\Sc_{r,+}^{n-1}$ and $\Sc_{r',+}^{n-1}$, with $r<r'$. We find that
\begin{eqnarray*}
\int_{M_{r,r'}}R dM_g & = & \int_{\Sc_{r',+}^{n-1}}\langle \mathcal{C},\mu\rangle d\Sc_{r',+}^{n-1} -
    \int_{\Sc_{r,+}^{n-1}}\langle \mathcal{C},\mu\rangle d\Sc_{r,+}^{n-1}\\
    & & \quad +\int_{\Sigma_{r,r'}}\langle \mathcal{C},\eta\rangle d\Sigma_h + \int_{M_{r,r'}}\Theta dM_g, 
 \end{eqnarray*}
where $\Sigma_{r,r'}$ is the portion of the boundary of $M_{r,r'}$ lying on $\Sigma$. On the other hand, from (\ref{asympexpmean}) we get 
\begin{eqnarray*}
\int_{\Sigma_{r,r'}}\langle \mathcal{C},\eta\rangle d\Sigma_h & = & \int_{\Sc_{r'}^{n-2}}g_{\alpha n}\vartheta^{\alpha}d\Sc_{r'}^{n-2}-
\int_{\Sc_{r}^{n-2}}g_{\alpha n}\vartheta^{\alpha}d\Sc_{r}^{n-2}\\
& & \quad -2\int_{\Sigma_{r,r'}}Hd
\Sigma_h + 2\int_{\Sigma_{r,r'}}\Theta'd\Sigma_h.
\end{eqnarray*}
Hence,
\begin{align}\notag
\int_{\Sc_{r',+}^{n-1}}\langle \mathcal{C},\mu\rangle d\Sc_{r',+}^{n-1}
&+\int_{\Sc_{r'}^{n-2}}g_{\alpha n}\vartheta^{\alpha}d\Sc_{r'}^{n-2} 
\\
=&\int_{M_{r,r'}}R dM_g 
+ 2\int_{\Sigma_{r,r'}}H d\Sigma_h\notag
\\
&\quad+ \int_{\Sc_{r,+}^{n-1}}\langle \mathcal{C},\mu\rangle d\Sc_{r,+}^{n-1}+\int_{\Sc_{r}^{n-2}}g_{\alpha n}\vartheta^{\alpha}d\Sc_{r}^{n-2} 
+o(r'),\notag
\end{align}
where $\lim_{r'\to\infty} o(r')=0$.
Taking into account that $R\in L^1(M)$ and $H\in L^1(\Sigma)$, this clearly shows that the limit in the right-hand side of (\ref{massadm}) exists and is finite for any given asymptotically flat coordinate system. 

If we repeat the above computation using $\phi \mathcal{C}$ instead of $\mathcal{C}$, where $\phi$ is a cutoff function which equals $1$ in a neighborhood of infinity, then we easily see that the mass is continuous as a function on $\Mc_\tau$, for the fixed asymptotically flat chart. Since it is obviously affine, its smoothness follows at once. Thus, it remains to check that the mass does not depend on the asymptotically flat chart used to compute it. For this we need to show that $\mathbb R^n_+$ is rigid at 
infinity in a suitable sense. This uses harmonic coordinates as in \cite{Ba}.
\begin{proposition}
\label{exisharmcoord}
Suppose  $(M,g)$ is asymptotically flat with $g\in\Mc_\tau$, $\tau>(n-2)/2$, and $\{x_i\}$ are asymptotically flat coordinates defined on $M_{\infty}$. Then there exist smooth functions $\{x'_i\}$ on $M$ satisfying
\begin{equation}\notag
\begin{cases}
\Delta_gx'_{\a}=0\,&\text{in}\:M\,,
\\
\displaystyle\frac{\d x'_{\a}}{\d \eta}=0\,&\text{on}\:\Sigma\,,
\end{cases}
\end{equation}
for $\a=1,...,n-1$,
\begin{equation}\notag
\begin{cases}
\Delta_gx'_{n}=0\,&\text{in}\:M\,,
\\
x'_{n}=0\,&\text{on}\:\Sigma\,,
\end{cases}
\end{equation}
and
\begin{equation}\notag
\begin{cases}
x_i-x'_i\in C^{2,\alpha}_{-\tau+1}(M_{\infty})\,,&\text{if}\:n\geq 4\,;
\\
x_i-x'_i\in C^{2,\alpha}_{-\tau+1+\e}(M_{\infty})\,,&\text{if}\:n=3\,.
\end{cases}
\end{equation}
Moreover, the functions $\{x'_i\}$ form an asymptotic flat coordinate system in a neighborhood of infinity.
\end{proposition}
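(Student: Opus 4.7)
The strategy is to construct $x'_i$ as a perturbation of $x_i$: writing $x'_i=\widetilde x_i+u_i$, where $\widetilde x_i$ is a globally defined cutoff extension of $x_i$, one reduces matters to a boundary value problem for $u_i$ which is handled by the weighted elliptic theory of Propositions \ref{Sobolev:N} and \ref{Sobolev:D}.

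Concretely, fix a smooth radial cutoff $\chi$ equal to $1$ outside a large coordinate half-ball in $M_\infty$ and $0$ on a neighborhood of $K$, and set $\widetilde x_i=\chi\,x_i$ (extended by zero). Since $\Psi$ sends $\Sigma\cap M_\infty$ into $\{x_n=0\}$, one automatically has $\widetilde x_n\equiv 0$ on $\Sigma$. A direct calculation based on (\ref{asymflat}) and the identity $\Delta_g x_i=-g^{jk}\Gamma^i_{jk}$ shows that $\Delta_g \widetilde x_i\in C^{0,\alpha}_{-\tau-1}(M)$, and (\ref{unitnormalexp}) yields $\partial \widetilde x_\alpha/\partial\eta=-(g^{nn})^{-1/2}g^{n\alpha}\in C^{1,\alpha}_{-\tau}(\Sigma)$.

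With this ansatz, the problem splits: for $\alpha=1,\dots,n-1$ one solves the inhomogeneous Neumann problem
\[
\Delta_g u_\alpha=-\Delta_g \widetilde x_\alpha \text{ in } M,\qquad \partial u_\alpha/\partial\eta=-\partial \widetilde x_\alpha/\partial\eta \text{ on } \Sigma,
\]
and for $i=n$ the homogeneous Dirichlet problem
\[
\Delta_g u_n=-\Delta_g \widetilde x_n \text{ in } M,\qquad u_n=0 \text{ on } \Sigma.
\]
Both the sources and the boundary data lie in the relevant weighted Sobolev spaces for any admissible weight $\beta\in(2-n,0)$, so Remark \ref{remsurj} (via Proposition \ref{Sobolev:N}) solves the Neumann problems while Proposition \ref{Sobolev:D} solves the Dirichlet one. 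The natural choice is $\beta=-\tau+1$, which lies in $(2-n,0)$ whenever $n\geq 4$ because $\tau>(n-2)/2\geq 1$; when $n=3$ the admissible window shrinks to $(-1,0)$ and the target weight may meet its endpoints, so one uses $\beta=-\tau+1+\varepsilon$ to stay safely inside. Weighted Sobolev embedding (Proposition \ref{Sobolev:lemma}) together with interior and boundary Schauder estimates then upgrades the solutions to the claimed H\"older classes and yields their smoothness on all of $M$.

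To conclude that $\{x'_i\}$ is an asymptotically flat chart near infinity, the bound $\nabla u_i=O(r^{-\tau})$ implies that the Jacobian of $x_i\mapsto x'_i$ is $\mathrm{Id}+O(r^{-\tau})$ and hence invertible for large $r$; the Dirichlet condition identifies $\Sigma$ near infinity with $\{x'_n=0\}$; and a routine change-of-variables computation preserves the expansion (\ref{asymflat}) with the same rate $\tau$. The main obstacle I expect is the borderline behavior in dimension $n=3$ just described: the admissible window $(-1,0)$ is narrow enough that the natural target weight $-\tau+1$ can touch or leave it, forcing the $\varepsilon$-loss recorded in the statement. A secondary technical point is the translation between the weighted H\"older spaces natural for $\Delta_g\widetilde x_i$ and the weighted Sobolev spaces of Propositions \ref{Sobolev:N} and \ref{Sobolev:D}, which is handled by the embeddings in Proposition \ref{Sobolev:lemma}.
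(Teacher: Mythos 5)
Your overall strategy (extend the coordinates globally, correct by solving Neumann problems for $x'_\alpha$ and a Dirichlet problem for $x'_n$, then invert the resulting perturbation near infinity) is the same as the paper's. The difference is that you route \emph{all} dimensions through the weighted Sobolev theory of Propositions \ref{Sobolev:N}, \ref{Sobolev:D} and Remark \ref{remsurj}, whereas the paper uses the weighted H\"older isomorphism of Proposition \ref{isomorphism} when $n\geq 4$ and falls back on the Sobolev route only for $n=3$. This is not a cosmetic difference: it is exactly what produces the sharp weight $-\tau+1$ in dimensions $n\geq 4$.

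The gap is in the sentence ``Both the sources and the boundary data lie in the relevant weighted Sobolev spaces for any admissible weight $\beta\in(2-n,0)$'' and the ensuing choice $\beta=-\tau+1$ for $n\geq 4$. The source $\Delta_g\widetilde x_i$ lies in $C^{0,\alpha}_{-\tau-1}(M)$, but $C^{0}_{\gamma}(M)\not\subset L^q_{0,\gamma}(M)$: with $u=O(r^{\gamma})$ and the same weight $\gamma$ the integrand $|r^{-\gamma}u|^qr^{-n}$ is of order $r^{-n}$, so the weighted norm diverges logarithmically. This is why Proposition \ref{Sobolev:lemma} only gives the embedding $C^{l,\alpha}_{\beta-\e}\subset L^q_{l,\beta}$ with a strict loss $\e>0$. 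Consequently your data only land in $L^q_{0,\beta-2}(M)\times L^q_{1-1/q,\beta-1}(\Sigma)$ for $\beta>-\tau+1$, and the Sobolev route (followed by Proposition \ref{Sobolev:lemma} and the Schauder estimate of Lemma \ref{lemma2}(a)) yields only $x_i-x'_i\in C^{2,\alpha}_{-\tau+1+\e}(M_\infty)$ in every dimension. That is the claimed conclusion for $n=3$, but it falls short of the statement for $n\geq 4$; nor can you remove the $\e$ afterwards, since Lemma \ref{lemma2}(a) requires the solution to already lie in $C^{1,\alpha}$ with the target weight. The repair is the paper's: for $n\geq 4$ the weight $-\tau+1$ is negative, so the data sit naturally in $C^{0,\alpha}_{-\tau-1}(M)\times C^{1,\alpha}_{-\tau}(\Sigma)$ and Proposition \ref{isomorphism} (applied with $h\equiv 0$, $\bar h\equiv 0$, and its Dirichlet analogue for $x'_n$) gives the correction directly in $C^{2,\alpha}_{-\tau+1}(M)$ with no loss. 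A secondary point: Remark \ref{remsurj} as stated also requires $\beta\notin\mathbb{Z}$, so even inside $(2-n,0)$ you must either perturb the weight to avoid integers or invoke the isomorphism statement (a) directly.
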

\begin{proof}
Let us first extend $x_i$ arbitrarily to smooth functions on $M$ satisfying $x_n=0$ on $\d M$.

If $n\geq 4$, then $-\tau+1$  is negative and the result follows from Proposition \ref{isomorphism}. Indeed, we use the fact that $\Delta_gx_i\in C^{0,\alpha}_{-\tau-1}(M)$ and $\partial x_{\alpha}/\partial\eta \in C^{1,\alpha}_{-\tau}(\Sigma)$, for $\a=1,..,n-1$, to solve for $z_i\in C^{2,\alpha}_{-\tau+1}(M)$ the equations 
$\Delta_g z_\alpha=\Delta_gx_\alpha$ and 
$\Delta_g z_n=\Delta_gx_n$ with boundary conditions $\partial z_\alpha/\partial\eta=\partial x_{\alpha}/\partial\eta$ and $z_n=0$, respectively. It is clear that $x'_i=x_i-z_i$ meets the conditions of the proposition.

If $n=3$, $-\tau+1$ may be positive, so we will need to make use of Propositions \ref{Sobolev:N}  and \ref{Sobolev:D} to find $x'_i$ as above. 
By Proposition \ref{Sobolev:lemma}, $\Delta_g x_i\in  C^{0,\alpha}_{-\tau-1}(M)\subset L^{q}_{0,-\tau-1+\e}(M)$ and $\d x_{\a}/\d \eta\in C^{1,\alpha}_{-\tau}(\Sigma)\subset L^{q}_{1-1/q,-\tau+1+\e}(\Sigma)$, for any $q>1$ and $\e>0$.
Assuming that $\e$ is chosen such that $\b=-\tau+1+\e\notin \mathbb{Z}$ and $\b>2-n$, it follows from Remark \ref{remsurj} that there exist $z_i\in L^{q}_{2,\b}(M)$ satisfying $\Delta_g z_\alpha=\Delta_gx_\alpha$ and 
$\Delta_g z_n=\Delta_g x_n$ with boundary conditions $\partial z_\alpha/\partial\eta=\partial x_\alpha/\partial\eta$ and $z_n=0$, respectively.
For $q>n$, Proposition \ref{Sobolev:lemma} implies that $z_i\in C^{1,\a}_{\b}(M)$, and Lemma \ref{lemma2}(a) ensures that $z_i\in C^{2,\a}_{-\tau+1+\e}(M)$. Finally, we set again $x'_i=x_i-z_i$.

That $\{x'_i\}$ form a coordinate system in some neighborhood of infinity follows from the fact that $|\nabla z_i|=O(r^{-\tau+\e})$.
\end{proof}

We now consider two asymptotically flat coordinate systems $\{x_i\}$ and $\{y_i\}$ on the same manifold and let  $\{x'_i\}$ and $\{y_i'\}$ be the corresponding harmonic coordinate systems as in Proposition \ref{exisharmcoord}. 
The following result describes the relationship between these coordinate systems. 
\begin{proposition}
\label{relatcoordharm}
If $(M,g)$ is as in Proposition \ref{exisharmcoord} then  there exists an orthogonal matrix $\{Q_i^j\}_{i,j=1}^{n}$  and constants $a_i$, $i=1,...,n$,  so that 
\[
x_i'=Q_{i}^jy_j'+a_i\,,
\]
with $Q^{n}_{\a}=Q^{\a}_{n}=a_n=0$, for $\a=1,...,n-1$.
\end{proposition}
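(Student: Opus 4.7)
The plan is to reduce the comparison of $\{x'_i\}$ and $\{y'_j\}$ to a uniqueness statement for harmonic functions on $M$ subject to the Neumann/Dirichlet boundary conditions from Proposition \ref{exisharmcoord}. First, I would show that the two asymptotically flat charts differ at infinity by a Euclidean isometry of the closed half-space, up to lower-order corrections. The standard argument applies: writing the transition $\Phi = x \circ y^{-1}$ and using $\Phi^* g = g$ together with the fact that the coefficients of $g$ approach $\delta$ at rate $O(r^{-\tau})$ in both charts, the Jacobian of $\Phi$ is asymptotically orthogonal. Integrating and imposing boundary preservation ($\{x_n = 0\}$ and $\{y_n = 0\}$ both cut out $\Sigma$ on the end), one obtains
\[
x_i = Q_i^j y_j + a_i + O(r^{1-\tau})
\]
for some $Q \in O(n)$ and constants $a_i$ with $Q^\alpha_n = 0$ and $a_n = 0$ for $\alpha < n$; orthogonality of $Q$ then forces $Q^n_\alpha = 0$ as well.

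Next, using Proposition \ref{exisharmcoord} to write $x'_i = x_i + z_i$ and $y'_i = y_i + w_i$ with $z_i, w_i$ in the appropriate weighted H\"older space, I set
\[
u_i := x'_i - Q_i^j y'_j - a_i.
\]
By linearity this is harmonic on $M$, and substitution gives $u_i = O(r^{1-\tau})$ when $n \geq 4$, or $O(r^{1-\tau+\varepsilon})$ when $n = 3$. The block form of $Q$ ensures compatibility with the boundary conditions: for $\alpha < n$, the only potentially obstructing term has coefficient $Q^n_\alpha = 0$, leaving $\partial u_\alpha/\partial \eta = 0$ on $\Sigma$; for $\alpha = n$, using $Q^\beta_n = 0$ and $a_n = 0$, we have $u_n|_\Sigma = 0$. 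It then suffices to show $u_i \equiv 0$.

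For $n \geq 4$, the hypothesis $\tau > (n-2)/2 \geq 1$ gives $1 - \tau < 0$, so by Proposition \ref{Sobolev:lemma} we may place $u_i$ in $L^q_{2,\beta}(M)$ for some $\beta < 0$ with $\beta \notin \mathbb{Z}$, and the injectivity parts of Propositions \ref{Sobolev:N} and \ref{Sobolev:D} yield $u_i \equiv 0$ at once. The main obstacle is the case $n = 3$, where the guaranteed decay $O(r^{1-\tau+\varepsilon})$ can be positive but strictly sub-linear. To handle this, I would apply weighted elliptic regularity to $\Delta_g u_i = 0$ (or equivalently, reflect across $\Sigma$ according to the relevant boundary condition and expand in Euclidean spherical harmonics on the end) to produce an expansion $u_i = A_i + O(r^{-1})$ for constants $A_i$. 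For $\alpha < n$ the Neumann condition does not constrain $A_\alpha$, so I can absorb it by replacing $a_\alpha$ with $a_\alpha + A_\alpha$; for $\alpha = n$ the Dirichlet condition $u_n|_\Sigma = 0$ propagates along the noncompact $\Sigma$ to infinity and forces $A_n = 0$ directly. In both cases the corrected $u_i$ decays, and the injectivity argument above yields $u_i \equiv 0$, completing the proof.
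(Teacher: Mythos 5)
Your argument is essentially correct, but it follows a genuinely different route from the paper. The paper never compares the underlying charts $\{x_i\}$ and $\{y_i\}$ at infinity: it passes to the double $(\widetilde M,\widetilde g)$ along $\Sigma$, extends the harmonic coordinates by even reflection (Neumann components) and odd reflection (Dirichlet component) to elements of $\ker\Delta_{\widetilde g}\subset L^q_{2,\beta}(\widetilde M)$, $1<\beta<2$, and then invokes Bartnik's facts that this kernel is independent of the chart and has dimension $n+1$; since $\{1,\widetilde y'_1,\dots,\widetilde y'_n\}$ is a basis, the affine relation $\widetilde x'_i=Q^j_i\widetilde y'_j+a_i$ is immediate, the parity/boundary conditions give $Q^n_\alpha=Q^\alpha_n=a_n=0$, and orthogonality of $Q$ follows because $g$ is asymptotically flat in both harmonic systems. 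You instead first import the theorem that two asymptotically flat charts agree up to a rigid motion modulo $O(r^{1-\tau})$ (Bartnik's Corollary~3.2, adapted to the half-space via boundary preservation), and then kill the harmonic difference $u_i=x'_i-Q^j_iy'_j-a_i$ by the injectivity statements of Propositions \ref{Sobolev:N} and \ref{Sobolev:D}, with an extra asymptotic-constant extraction when $n=3$. Both work; the paper's route is shorter given that the doubling and Bartnik's kernel theory are already in place for Propositions \ref{Sobolev:N} and \ref{Sobolev:D} and it avoids the chart-comparison theorem altogether, while yours stays on $M$ with boundary-value problems but leans on two nontrivial standard inputs that you only sketch: the asymptotic rigid-motion theorem (the passage from pointwise near-orthogonality of the Jacobian to a fixed limiting $Q$ and translation requires the full oscillation estimate, not just ``integrating''), and, for $n=3$, the structure result that a $g$-harmonic function of sub-linear growth on the end is a constant plus a decaying term (which must be justified, e.g.\ by reflection and the perturbed spherical-harmonics expansion as in Bartnik, since $g$ is not flat near infinity). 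With those two inputs made precise, your absorption of $A_\alpha$ into $a_\alpha$ and the use of the noncompact Dirichlet condition to force $A_n=0$ are correct, and the conclusion follows.
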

\bp
We consider the double $(\widetilde{M},\widetilde{g})$ as in the proofs of Propositions \ref{Sobolev:N} and \ref{Sobolev:D}. Since $x'_i\in {\rm{ker}}\,\Delta_g\subset L^q_{2,\beta}(M)$ for all $1<\beta<2$ and $q\geq 1$, we can define functions $\widetilde{x}'_i\in {\rm{ker}}\, \Delta_{\widetilde{g}}\subset L^q_{2,\beta}(\widetilde{M})$ by $\widetilde{x}'_{\a}(x,j)=x'_{\a}(x)$ and $\widetilde{x}'_{n}(x,j)=(-1)^{j}x'_{n}(x)$ for $j=0,1$. We define $\widetilde{y}'_i$ in a similar way.  Although the coordinates $x_i$ and $y_i$ define different spaces $L^q_{k,\beta}(\widetilde{M})$ for $k\geq 1$, $ {\rm{ker}}\, \Delta_{\widetilde{g}}$ is independent of the chosen coordinates as observed in \cite[p. 676]{Ba}. 

Since ${\rm {dim}} ({\rm{ker}}\,\Delta_{\widetilde{g}})=n+1$ and the set $\{1,\tilde{y}'_1,...,\tilde{y}'_n\}$ is linearly independent, we can write $\widetilde{x}'_ i=Q^{j}_{i}\widetilde{y}'_j+a_i$ and then the result follows by using the boundary conditions on $x'_i$ and $y'_i$. We observe that $\{Q_{i}^{j}\}$ is orthogonal because the metric $g$ is asymptotially flat with respect to both $x'_i$ and $y'_i$.  
\ep

By eventually composing the coordinates with rigid motions of $\Rn$ we may assume that 
\begin{equation}\label{trans1}
\partial_{y_i}=\widetilde Q_{j}^i\partial_{x_j}, 
\end{equation}
with 
\begin{equation}\label{trans2}
\widetilde {Q}^{i}_{j}=\delta^{i}_{j}+O(r^{-\tau+\e}), \quad\text{and}\:\: \widetilde Q^\alpha_n=0\:\;\text{along}\: \Sigma. 
\end{equation} 
This is the promised rigidity at infinity of $\mathbb R^n_+$, which we now explore to complete the proof of Proposition \ref{massinv}.

As in \cite[p.680]{Ba} we write $R\star_g 1=d\tilde{\mathcal C}^{(x)}+{\mathcal D}$  in a given coordinate system $\{x_i\}$,  where   $\tilde{\mathcal C}^{(x)}=g^{ij}\omega_i^k\zeta_{kj}$, with $\nabla_g\partial_{x_i}=\omega_i^k\partial_{x_k}$ and $\zeta_{kj}=(\partial_{x_k}\wedge \partial_{x_j})\lrcorner\star_g 1$, and ${\mathcal D}=O(r^{-2\tau-2})$. 
Observe that the mass density $\mathcal C^{(x)}$ (defined in Proposition \ref{expasymreg}) in this same coordinate system satisties
$$
\lim_{r\to +\infty}\int_{\Sc_{r,+}^{n-1}}\langle \mathcal{C}^{(x)},\mu\rangle d\Sc_{r',+}^{n-1}
=\lim_{r\to +\infty}\int_{\Sc_{r,+}^{n-1}}\tilde{\mathcal C}^{(x)}.
$$
Proceeding as  in \cite[pp.681-682]{Ba} it follows from (\ref{trans1})-(\ref{trans2}) that
\[
\tilde{\mathcal C}^{(x)}-\tilde{\mathcal C}^{(y)}=d\left(\star_\delta\left(\widetilde Q^j_idx_j\wedge dx_i\right)\right)+O(r^{-2\tau-1+2\epsilon}),
\]
that is, the ADM densities differ by the sum of a total differential and a term that integrates to zero as $r\to +\infty$. This is the \lq simple but curious cancellation\rq\, mentioned in {\cite{Ba}}. Observing the chosen  orientation for $S^{n-2}$ in Definition \ref{defasymflat}, we find that 
\begin{align*}
\lim_{r\to +\infty}\int_{\Sc_{r,+}^{n-1}}\tilde{\mathcal C}^{(x)}  - 
\lim_{r\to +\infty}\int_{\Sc_{r,+}^{n-1}}\tilde{\mathcal C} ^{(y)}
& =
\lim_{r\to+\infty}\int_{\Sc^{n-2}_r}\star_\delta\left(\widetilde Q^j_idx_i\wedge dx_j\right)\\
& = 
\lim_{r\to+\infty}\int_{\Sc^{n-2}_r}\star_\delta\left(\widetilde Q^n_\alpha dx_{\a}\wedge dx_{n}\right).
\end{align*}
On the other hand, by (\ref{trans1})-(\ref{trans2}) we compute that 
\begin{eqnarray*}
g^{(y)}_{\alpha n} & = & \langle \partial_{y_{\alpha}},\partial_{y_n}\rangle=
\langle \widetilde Q^\alpha_i\partial_{x_i},\widetilde Q_j^n\partial_{x_j}\rangle \\
& = & \widetilde Q^\alpha_\beta\widetilde Q^n_j
g^{(x)}_{\beta j} =
\widetilde Q^n_jg^{(x)}_{\alpha j}+ O(r^{-2\tau+\epsilon})\\
& = & 
g_{\alpha n}^{(x)}+ \widetilde Q^n_\alpha +O(r^{-2\tau+\epsilon}).
\end{eqnarray*}
This completes the proof of Proposition \ref{massinv}.

\begin{remark}
\label{keyrem}
{\rm
In a coordinate system where $g_{\alpha n}=0$ along 
$\Sigma$ in the asymptotic region, 
the expression (\ref{massadm}) simplifies to 
\begin{equation}
\label{keyremeq}
{\mathfrak m}_{(M,g)}=\lim_{r\to +\infty}\int_{\Sc_{r,+}^{n-1}}(g_{ij,j}- g_{jj,i})\mu^i d\Sc_{r,+}^{n-1}. 
\end{equation}
In particular, this takes place if the metric is conformally flat near infinity, which is the case of the metric $\overline g$ constructed in Proposition \ref{conf:flat:assump} below. As a concrete example, 
consider the 
{\em half Schwarzschild space}, which is $M_m=\{x\in \mathbb R^n_+;|x|\geq (m/2)^{\frac{1}{n-2}}\}$ endowed with the conformal metric 
\[
g_m=\left(1+\frac{m}{2}|x|^{2-n}\right)^{\frac{4}{n-2}}\delta, \quad m>0.
\]
Thus, $g_m$ is scalar-flat with a  non-compact totally geodesic boundary given by $x_n=0$ and 
a straightforward computation using (\ref{massadm}) shows that 
\[
\label{masshsc}
\mathfrak m_{(M_m,g_m)}=(n-1)\omega_{n-1}m. 
\]
This means that $\mathfrak m_{(M_m,g_m)}$ is half the ADM mass  of the standard Schwarzschild space, which is the double of $(M_m,g_m)$ along its totally geodesic boundary.
A similar remark applies  for the mass invariants of the manifolds appearing in the doubling construction used in the proof of Theorem \ref{mainm} presented in the next  section.}
\end{remark}

\section{A proof of Theorem \ref{mainm}}
\label{firstproof}

In the following we denote the dependence of geometric invariants on the underlying metric by a subscript. 
In particular, we consider the conformal operators
\[
L_g=-a_n\Delta_g+R_g, \quad a_n=\frac{4(n-1)}{n-2},
\]
and 
\[
B_g= b_n\frac{\partial}{\partial\eta_g}+H_g, \quad b_n= \frac{2(n-1)}{n-2}, 
\]
acting on function on $M$ and $\Sigma$, respectively. 
We also recall  the function $r(x)$ defined in Section \ref{massgeo} as any smooth, positive extension of the asymptotic parameter $|x|$ to $M$.

The following result shows that, under the conditions of Theorem \ref{mainm}, the asymptotics of the metric $g$ can be substantially improved. This follows an idea first put forward by Schoen and Yau in their celebrated proof of the classical Positive Mass Theorem (\cite{SY2}).  

\begin{proposition}\label{conf:flat:assump}
Let $(M,g)$ be an asymptotically flat manifold with $g\in\Mc_{\tau}$, where $\tau>(n-2)/2$, and assume that $R_g\geq 0$ and  $H_g\geq 0$. Then  for any $\epsilon>0$ small enough there exists an asymptotically flat metric  $\bar{g}\in\Mc_{\tau-\epsilon}$ satisfying:\\ 
i) $R_{\bar{g}}\geq 0$ and $H_{\bar{g}}\geq 0$,  with $R_{\bar{g}}\equiv 0$ and  $H_{\bar{g}}\equiv 0$  near infinity;\\ 
ii) $\overline g$ is conformally flat near infinity;\\
iii) $|\mathfrak m_{(M,\bar g)}- \mathfrak m_{(M,g)}|\leq \e$. 
\end{proposition}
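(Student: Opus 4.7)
My plan is to follow the Schoen--Yau density argument \cite{SY2}, adapted to the boundary setting, in three main steps.

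\emph{Step 1: conformal change killing $R$ and $H$ at infinity.} I will fix a smooth cutoff $\chi_\sigma$ with $\chi_\sigma = 1$ on $\{r \le \sigma\}$ and $\chi_\sigma = 0$ on $\{r \ge 2\sigma\}$. By Proposition~\ref{expasymreg} and $\tau > (n-2)/2$, the nonnegative functions $h := (1-\chi_\sigma) R_g/a_n$ and $\bar h := (1-\chi_\sigma) H_g/b_n$ lie in the weighted H\"older spaces required by Proposition~\ref{isomorphism}; for $\gamma \in (\max\{2-n,-\tau\},0)$ I will invoke that proposition to solve
\[
-a_n \Delta_g v + (1-\chi_\sigma) R_g\, v = -(1-\chi_\sigma) R_g, \qquad b_n \tfrac{\partial v}{\partial \eta} + (1-\chi_\sigma) H_g\, v = -(1-\chi_\sigma) H_g
\]
for $v \in C^{2,\alpha}_\gamma(M)$. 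Since the right-hand sides are supported in $\{r \ge \sigma\}$ and have target-space norms of order $\sigma^{-\tau-\gamma}\to 0$, continuity of $T^{-1}$ will yield $\|v\|_{C^{2,\alpha}_\gamma} \to 0$, hence $u := 1+v > 0$ for $\sigma$ large. Rearranging the PDE gives $L_g u = \chi_\sigma R_g u$ and $B_g u = \chi_\sigma H_g u$, so $g_1 := u^{4/(n-2)} g$ will satisfy $R_{g_1} = u^{-(n+2)/(n-2)} \chi_\sigma R_g \ge 0$ and $H_{g_1} = u^{-n/(n-2)} \chi_\sigma H_g \ge 0$, both supported in $\{r \le 2\sigma\}$, with $g_1 \in \Mc_{\tau-\epsilon}$ and $|\mathfrak m_{g_1} - \mathfrak m_g|$ arbitrarily small by Proposition~\ref{massinv}.

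\emph{Step 2: matching a half-Schwarzschild exterior.} Next, I will pass to harmonic coordinates for $g_1$ via Proposition~\ref{exisharmcoord}. In those coordinates, the vanishing of $R_{g_1}$ and $H_{g_1}$ on the exterior together with the Neumann conditions on $\Sigma$ should yield an asymptotic expansion whose leading (trace) part matches the half-Schwarzschild model
\[
g_A := \bigl(1 + A|x|^{2-n}\bigr)^{4/(n-2)} \delta,
\]
with $A$ fixed so that $\mathfrak m_{g_A} = \mathfrak m_{g_1}$ (feasible by Remark~\ref{keyrem}). Note that $g_A$ is conformally flat, scalar-flat, and satisfies $H_{g_A} = 0$ on $\{x_n = 0\}$ by the radial symmetry of its conformal factor. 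I will then pick a second cutoff $\phi$, equal to $1$ on $\{r \le \sigma_1\}$ and to $0$ on $\{r \ge 2\sigma_1\}$ with $\sigma_1 \gg \sigma$, and set
\[
\bar g := \phi\, g_1 + (1-\phi)\, g_A,
\]
so that $\bar g = g_1$ on $\{r \le \sigma_1\}$ and $\bar g = g_A$ on $\{r \ge 2\sigma_1\}$; hence $\bar g$ is conformally flat with vanishing scalar and mean curvatures near infinity, and $\mathfrak m_{\bar g} = \mathfrak m_{g_A} = \mathfrak m_{g_1}$ since the mass depends only on the behavior at infinity.

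\emph{Step 3 and main obstacle.} On the transition annulus $\{\sigma_1 \le r \le 2\sigma_1\}$, $R_{\bar g}$ and $H_{\bar g}$ will be small in weighted norms (thanks to the sharpness of the Step-2 expansion) but need not be nonnegative, and this is the chief difficulty: a Schoen--Yau-style conformal correction preserves the sign of the scalar curvature and hence cannot by itself repair negative values. My plan to resolve this is to enlarge $\sigma_1$ and refine the interpolation, using the sharpness $(g_1)_{ij} - (g_A)_{ij} = o(|x|^{2-n})$ in harmonic coordinates either to pick $\phi$ with appropriate monotonicity so that the convex combination already has nonnegative $R$ and $H$ on the annulus, or to absorb the defect by a Miao-type \cite{Mi} smoothing argument followed by a final small conformal perturbation (which preserves the conformal flatness of $\bar g$ on $\{r \ge 2\sigma_1\}$ since $g_A$ is conformally flat). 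Proposition~\ref{massinv} controls the mass drift at each stage, keeping the total change within $\epsilon$.
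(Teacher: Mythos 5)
Your Step 1 is sound and is essentially the analytic mechanism of the paper (solve a linear system via Proposition \ref{isomorphism} plus a perturbation argument, then make a conformal change so that the new scalar and mean curvatures are nonnegative and compactly supported). The genuine gaps are in Steps 2 and 3, i.e. precisely where conformal flatness near infinity has to be produced. First, the expansion you rely on, $(g_1)_{ij}-(g_A)_{ij}=o(|x|^{2-n})$ in harmonic coordinates, is unjustified and is false in general for decay rate $\tau\in((n-2)/2,\,n-2)$: vanishing of the single scalar quantity $R_{g_1}$ (plus the boundary conditions) does not improve the decay of the full tensor $g_1-\delta$ beyond $O(r^{-\tau+\epsilon})$, and the Schwarzschild-type expansion such as (\ref{exp:u}) is available only after one already knows the metric is conformally flat near infinity with harmonic conformal factor --- which is exactly what you are trying to arrange, so the argument is circular. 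Second, Step 3 leaves the sign problem on the transition annulus unresolved, and neither suggested remedy works as stated: choosing the cutoff $\phi$ monotone cannot control the sign of the second-derivative terms entering $R_{\bar g}$, and Miao's theorem \cite{Mi} concerns distributional positivity across a corner hypersurface, not repairing negative curvature on an open region. Moreover, the premise behind your ``chief difficulty'' --- that a conformal correction preserves the sign of the scalar curvature and so cannot repair negative values --- is a misconception: solving $L_{\bar g}u=Fu$ with a prescribed nonnegative, compactly supported $F$ gives $R_{u^{4/(n-2)}\bar g}=u^{-4/(n-2)}F\geq 0$ regardless of the sign of $R_{\bar g}$; what solvability with $u=1+v$, $v$ small, requires is only smallness of the defect in the weighted norms, obtained by perturbing the isomorphism of Proposition \ref{isomorphism}, not any sign condition on $R_{\bar g}$.

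Once this is realized, the whole Schwarzschild-matching step is superfluous, and this is how the paper proceeds: cut off directly to the flat metric, $g_R=\chi_R g+(1-\chi_R)\delta$, and solve the coupled system $L_{g_R}u_R=\chi_R R_g\,u_R$ in $M$, $B_{g_R}u_R=\chi_R H_g\,u_R$ on $\Sigma$, with $u_R=1+v_R$ and $v_R$ small in $C^{2,\alpha}_{-\tau+\epsilon}(M)$; the errors $R_{g_R}-\chi_R R_g$ and $H_{g_R}-\chi_R H_g$ created by the crude cutoff are small in the relevant weighted spaces (no refined expansion needed), so the perturbed operator is still an isomorphism for large $R$. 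The metric $\bar g=u_R^{4/(n-2)}g_R$ then has $R_{\bar g}=u_R^{-4/(n-2)}\chi_R R_g\geq 0$ and $H_{\bar g}=u_R^{-2/(n-2)}\chi_R H_g\geq 0$, both vanishing near infinity, is conformally flat near infinity because $g_R=\delta$ there, and satisfies (iii) by the continuity of the mass in the $\Mc_{\tau-\epsilon}$ topology (Proposition \ref{massinv}) --- note that this continuity also involves $L^1$ control of $R$ and $H$, a point your multi-stage scheme would additionally have to track. In short: your Step 1 duplicates the paper's idea, but the conformal factor should be used to repair the cutoff-to-$\delta$ metric itself; as written, Steps 2--3 contain an unproved (and generally false) expansion and an acknowledged unsolved sign problem, so the proposal does not yet constitute a proof.
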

\begin{proof}
Our  argument uses the conformal method and proceeds similarly  to  the proof of \cite[Lemma 10.6]{LP}.
Let $\chi:\R\to [0,1]$ be a smooth cutoff function such that $\chi(t)=1$ for $t\leq 1$ and $\chi(t)=0$ for $t\geq 2$. 
For $R>0$ large define $\chi_R(x)=\chi(R^{-1}r(x))$ and set $g_R=\chi_R g+(1-\chi_R)\delta$. We are going to solve
\begin{equation}\label{sistema:uR}
\begin{cases}
L_{g_R}u_R=\chi_R R_g u_R\,,&\text{in}\:M\,,
\\
B_{g_R}u_R=\chi_R H_g u_R\,,&\text{on}\:\Sigma
\end{cases}
\end{equation}
for $u_R>0$ and $R$ large enough, and check that the conformal metric $\overline g=u_R^{\frac{4}{n-2}}g_R$ has all the desired properties.

We write $u_R=1+v_R$ and set 
$$
L_R=-\Delta_{g_R}+\frac{1}{a_n}\gamma_R, \quad
B_R=\frac{\d}{\d\eta_{g_R}}+\frac{1}{b_n}\bar{\gamma}_R,
$$
where $\gamma_R=R_{g_R}-\chi_R R_g$ and $\bar{\gamma}_R=H_{g_R}-\chi_R H_g$.  
Thus, (\ref{sistema:uR}) is equivalent to
\begin{equation}\label{sistema:psiR}
\begin{cases}
a_n L_Rv_R=-\gamma_R\,,&\text{in}\:M\,,
\\
b_nB_Rv_R=-\bar{\gamma}_R\,,&\text{on}\:\Sigma\,,
\end{cases}
\end{equation}
For any $\e>0$ we have $\|\gamma_R\|_{C^{0,\a}_{-\tau-2+\e}(M)}\to 0$ and  $\|\bar{\gamma}_R\|_{C^{1,\a}_{-\tau-1+\e}(\Sigma)}\to 0$ as $R\to \infty$. In what follows we are going to solve (\ref{sistema:psiR}) uniquely for $v_R\in C^{2,\a}_{-\tau+\e}(M)$, with $\|\psi_R\|_{C^{2,\a}_{-\tau+\e}(M)}\to 0$ as $R\to \infty$. 

Let us fix $0<\e<\tau-\frac{n-2}{2}$.  According to Proposition \ref{isomorphism}, the operator
$$
T:C^{2,\a}_{-\tau+\e}(M)\to C^{0,\a}_{-\tau+\e-2}(M)\times C^{1,\a}_{-\tau+\e-1}(\Sigma)
$$ 
given by $\displaystyle T u=(\Delta_gu,\frac{\d u}{\d\eta_g})$ is an isomorphism.

Let us consider $\displaystyle T_Ru=(L_Ru,B_Ru)$.
It follows from the easily established  estimates
$$
\|(\Delta_{g_R}-\Delta_g)u)\|_{ C^{0,\a}_{-\tau+\e-2}(M)}\leq \|g_R-g\|_{ C^{1,\a}_{0}(M)}\|u\|_{ C^{2,\a}_{-\tau+\e}(M)}\;,
$$
$$
\|(\d/\d\eta_{g_R}-\d/\d\eta_{g})u\|_{ C^{1,\a}_{-\tau+\e-1}(\Sigma)}\leq 
\|g_R-g\|_{ C^{0,\a}_{0}(M)}\|u\|_{ C^{2,\a}_{-\tau+\e}(M)}\;,
$$
$$
\|\gamma_R u\|_{ C^{0,\a}_{-\tau+\e-2}(M)}\leq 
\|\gamma_R\|_{ C^{0,\a}_{-2}(M)}\|u\|_{ C^{0,\a}_{-\tau+\e}(M)}\;,
$$
and
$$
\|\bar{\gamma}_R u\|_{ C^{1,\a}_{-\tau+\e-1}(\Sigma)}\leq 
\|\bar{\gamma}_R\|_{ C^{1,\a}_{-1}(\Sigma)}\|u\|_{ C^{1,\a}_{-\tau+\e}(M)}
$$
that $T_R-T$ is arbitrarily small in the operator norm as $R\to\infty$. From this we conclude that $T_R$ is also an isomorphism for large $R$, which provides a unique solution $v_R$ to (\ref{sistema:psiR}). 

Now we can choose $\bar{g}=g_R$ for $R$ large, proving (i) and (ii).
It is easy to prove that $g_R\to g$  in $\Mc_{\tau-\e}$, as $R\to\infty$, so that the property (iii) also holds.
\end{proof}

We will  now present our first proof of Theorem \ref{mainm}.
We first observe that in order to prove the inequality $\mathfrak m_{(M,g)}\geq 0$, it suffices to assume that $g$ satisfies the conclusion of Proposition \ref{conf:flat:assump}. Thus, we may assume that
$R_g\geq 0$ and $H_g\geq 0$ everywhere with $R_g=0$, $H_g= 0$ and $g$ conformally flat outside a compact set $K=\{x\in M\,;\:r(x)\leq C\}$.

Since $\Sigma$ is umbilic and $H_g=0$ outside $K$ it follows that  $\Sigma\backslash K$ is totally geodesic. This suggests to consider
the double $(\widetilde{M},\widetilde{g})$ of $(M,g)$ along $\Sigma$. More precisely, $\widetilde{M}=M\times \{0,1\}\slash\thicksim$, where $(y,0)\thicksim (y,1)$ for all $y\in\Sigma$, and $\widetilde{g}(y,j)=g(y)$ for all $y\in M$ and $j=0,1$.  
It is easy to check that 
$\widetilde{g}$ is $C^{2,\a}$ on $\widetilde{M}\backslash\widetilde{K}$,
where $\widetilde{K}$ is the double of $K$.
Observe that, if we 
consider the compact hypersurface (with boundary) $\Sigma_K=\Sigma\cap K$, we have that both  $\widetilde{g}|_{\widetilde{M}\backslash M}$ and on $\widetilde g_{M\backslash \Sigma}$  induce the same metric on $\Sigma$ and hence on $\Sigma_K$.
Also,
since $\Sigma_K$ has nonnegative mean curvature $H_g$ with respect to $M\backslash \Sigma$ and the unit normal $\eta$, it has nonpositive mean curvature $-H_g$ with respect to $\widetilde{M}\backslash M$ and the same unit normal vector $\eta$.

We can extend $\Sigma_K$ to a  closed hypersurface $\Sigma'$ in such a way that $\Sigma'\backslash\Sigma_K\subset \widetilde{M}\backslash\widetilde{K}$ and $\eta$ points to the unbounded connected component of $\widetilde{M}\backslash \Sigma'$; see Figure \ref{fig2}. Let $H^+$ be the mean curvature of $\Sigma'$ with respect to this unbounded component and $H^-$ be the one with respect to the bounded component, both calculated using a smooth extension of $\eta$ normal to $\Sigma'$. 
Observe that $H^-=-H^+=H_g\geq 0$ on $\Sigma_K$, the region where $\widetilde g$ is possibly nonsmooth. On the other hand, since $\widetilde{g}$ is $C^{2,\a}$ in $\widetilde{M}\backslash\widetilde{K}\supset \Sigma'\backslash \Sigma_K$, we see  that $H^+=H^-$ on $\Sigma'\backslash\Sigma_K$. Thus, $\widetilde{M}$ together with $\Sigma'$ satisfy the assumptions  of \cite[Theorem 1]{Mi}, which allows us  to infer that the ADM mass $m_{(\widetilde M,\widetilde g)}$ of the doubled manifold is nonnegative. As in Remark \ref{keyrem}, this mass is precisely $2\mathfrak m_{(M,g)}$, so we conclude that  $\mathfrak m_{(M,g)}\geq 0$, as desired.

\begin{figure}[!htb]
\centering
\includegraphics[scale=0.70]{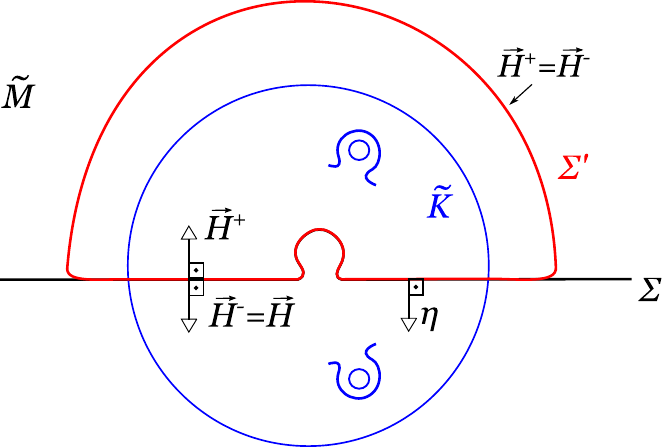}
\caption{The doubling construction.}
\label{fig2}
\end{figure}
\begin{remark} {\rm{Although originally stated in a  more restrictive setting, the result in \cite{Mi} also holds in our context where $\Sigma'$ may have finitely many connnected components and the unbounded region $N$ determined by this hypersurface  does not need to be diffeomorphic to $\R^n$ minus a ball. In fact, it is enough that $N$ is an asymptotically flat manifold in the classical sense. We can extend this result even further to the case where $\widetilde{M}\backslash N$ has a finite number of ends, allowing the generalization stated in Remark \ref{gener}.}} 
\end{remark}
We will now prove the rigidity statement in Theorem \ref{mainm}.
The key result is the lemma below, which says that a manifold $(M,g)$ as in Theorem \ref{mainm} which has minimal mass necessarily is Ricci-flat and has totally geodesic boundary. 
\begin{lemma}\label{scalar:rigidity}
Under the conditions of Theorem \ref{mainm}, if $\mathfrak m_{(M,g)}= 0$ then ${\rm Ric}_g=0$ and $A_g=0$.
\end{lemma}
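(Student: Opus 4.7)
My plan is to combine the variational formula of Proposition \ref{propvar} with the positive mass inequality already established in the first part of Theorem \ref{mainm}, in two steps. First, a conformal correction reduces us to the case $R_g\equiv 0$ and $H_g\equiv 0$; then a compactly supported perturbation argument forces $\mathrm{Ric}_g\equiv 0$ and $A_g\equiv 0$ via the variational formula.

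For the first step, I would write $u_0=1-v_0$ and apply Proposition \ref{isomorphism} with $h=R_g/a_n$ and $\bar h=H_g/b_n$ (after invoking Proposition \ref{conf:flat:assump} to ensure the requisite decay of $R_g,H_g$) to obtain a unique solution $v_0\in C^{2,\alpha}_\gamma(M)$, $2-n<\gamma<0$, of $L_g v_0=R_g$, $B_g v_0=H_g$, so that $u_0>0$ solves $L_g u_0=0$ in $M$, $B_g u_0=0$ on $\Sigma$, $u_0\to 1$ at infinity. The maximum principle gives $v_0\geq 0$, hence $u_0\leq 1$, and standard asymptotic analysis yields the expansion $u_0=1-A'|x|^{2-n}+o(|x|^{2-n})$ with $A'\geq 0$. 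The rescaled metric $\tilde g:=u_0^{4/(n-2)}g$ satisfies $R_{\tilde g}\equiv 0$ and $H_{\tilde g}\equiv 0$, and a direct expansion of the mass formula gives
\[
\mathfrak m_{(M,\tilde g)}=\mathfrak m_{(M,g)}-2(n-1)\omega_{n-1}A'.
\]
Combined with $\mathfrak m_{(M,g)}=0$, $A'\geq 0$, and the positive mass inequality $\mathfrak m_{(M,\tilde g)}\geq 0$ (Theorem \ref{mainm} applied to $\tilde g$), this forces $A'=0$. On the other hand, integrating $L_g u_0=0$ against $1$ over the truncation $M_R$, using $B_g u_0=0$ and the asymptotic expansion in the limit $R\to\infty$, produces the Green-type identity
\[
2(n-1)\omega_{n-1}A'=\int_M R_g u_0\,dM_g+2\int_\Sigma H_g u_0\,d\Sigma_h.
\]
With $A'=0$, $u_0>0$, and $R_g,H_g\geq 0$, this forces $R_g\equiv 0$ and $H_g\equiv 0$, whence $u_0\equiv 1$ and $\tilde g=g$.

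For the second step, given any smooth compactly supported symmetric $(0,2)$-tensor $k$ on $M$, set $g_s=g+sk$ for $|s|$ small. Compact support of $k$ preserves the asymptotics, so $\mathfrak m_{(M,g_s)}=\mathfrak m_{(M,g)}=0$. Repeating the construction of the first step with $g_s$ in place of $g$ --- the existence and smooth dependence on $s$ of the corrected conformal factor $u_s$ following from Proposition \ref{isomorphism} together with the implicit function theorem near $s=0$ --- produces $\tilde g_s=u_s^{4/(n-2)}g_s$ with $R_{\tilde g_s}=H_{\tilde g_s}=0$ and, by Theorem \ref{mainm}, $\mathfrak m_{(M,\tilde g_s)}\geq 0$. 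Since $u_0\equiv 1$ from the first step, $\tilde g_0=g$ and $\mathfrak m_{(M,\tilde g_0)}=0$, so $s=0$ is a minimum of $s\mapsto \mathfrak m_{(M,\tilde g_s)}$ and its derivative vanishes there. As $R_{\tilde g_s}=H_{\tilde g_s}=0$, $\widetilde{\Ac}(\tilde g_s)\equiv 0$ and so $\widetilde{\Bc}(\tilde g_s)=-\mathfrak m_{(M,\tilde g_s)}$; applying Proposition \ref{propvar} to the family $\tilde g_s$ and using $R_g=H_g=0$ to absorb the trace contributions coming from the conformal factor $u_s$, the stationarity at $s=0$ reduces to
\[
0=\int_M\bigl(R_{ij}-\tfrac{R_g}{2}g_{ij}\bigr)k^{ij}\,dM_g+\int_\Sigma\bigl(A_{\alpha\beta}-H_g h_{\alpha\beta}\bigr)(k|_\Sigma)^{\alpha\beta}\,d\Sigma_h
\]
for every compactly supported $k$. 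Taking $k$ supported in the interior yields $\mathrm{Ric}_g-(R_g/2)g\equiv 0$, which with $R_g=0$ gives $\mathrm{Ric}_g\equiv 0$; then allowing $k$ to vary arbitrarily on $\Sigma$ yields $A_g-H_g h\equiv 0$, which with $H_g=0$ gives $A_g\equiv 0$.

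The main obstacle will be the sign bookkeeping and asymptotic analysis in the first step: the argument relies critically on the maximum principle forcing $A'\geq 0$, the conformal mass formula producing the matching sign $-2(n-1)\omega_{n-1}A'\leq 0$, and the PMT inequality $\mathfrak m_{(M,\tilde g)}\geq 0$ squeezing $A'$ to zero. The Green-type identity then provides the final punch, converting $A'=0$ into the pointwise vanishing of $R_g$ and $H_g$, after which the variational step is a direct unwinding of Proposition \ref{propvar}.
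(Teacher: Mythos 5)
Your argument is essentially correct, but it takes a genuinely different (and longer) route than the paper's. The paper never establishes $R_g\equiv 0$, $H_g\equiv 0$ first: for an arbitrary compactly supported $k$ it solves the linear problem $L_{g_t}u_t=R_gu_t$, $B_{g_t}u_t=H_gu_t$ (equivalently a system for $v_t=u_t-1$ whose operator is a small, compactly supported perturbation of $(-a_n\Delta_g,\,b_n\partial/\partial\eta_g)$, so no sign condition on $R_{g_t},H_{g_t}$ is needed), so that $\widetilde g_t=u_t^{4/(n-2)}(g+tk)$ satisfies $R_{\widetilde g_t}=u_t^{-4/(n-2)}R_g\geq 0$ and $H_{\widetilde g_t}=u_t^{-2/(n-2)}H_g\geq 0$. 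Since the already proved inequality gives $\mathfrak m_{(M,\widetilde g_t)}\geq 0=\mathfrak m_{(M,\widetilde g_0)}$, the first variation vanishes, and in the combined formula of Proposition \ref{propvar} the contributions of $\dot u$, $R_g$ and $H_g$ cancel identically, yielding $0=\int_M\langle k,{\rm Ric}_g\rangle\,dM_g+\int_\Sigma\langle k,A_g\rangle\,d\Sigma_h$ directly, with no need of a preliminary scalar-flatness step. Your two-step scheme --- first kill $R_g$ and $H_g$ by a global conformal factor $u_0$, using the maximum principle, the expansion $u_0=1-A'|x|^{2-n}+o(|x|^{2-n})$, the conformal transformation law of the mass and the mass inequality for $\widetilde g$, plus the Green identity; then run the variational argument --- is the classical Schoen--Yau style route and does work, at the price of two extra analytic ingredients the paper avoids: the differentiated leading-order expansion of $u_0$ (needed both for the mass-change formula and for the boundary term in the Green identity; convergence of the relevant integrals uses $R_g\in L^1(M)$, $H_g\in L^1(\Sigma)$ and $\tau>(n-2)/2$), and the conformal transformation law of (\ref{massadm}) itself, including the check that the corner term $\int_{\Sc^{n-2}_r}g_{\alpha n}\vartheta^\alpha$ is unaffected to leading order. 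Your second step is then essentially the paper's argument specialized to $R_g=H_g=0$.

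One caution: do not literally ``invoke Proposition \ref{conf:flat:assump}'' at the start. That proposition replaces $g$ by a metric $\bar g$ whose mass agrees with $\mathfrak m_{(M,g)}$ only up to $\e$, so it cannot be used in a rigidity argument at exactly zero mass, and conclusions about $\bar g$ would not transfer back to $g$. Fortunately you do not need it: asymptotic flatness already gives $R_g\in C^{0,\a}_{-\tau-2}(M)$ and $H_g\in C^{1,\a}_{-\tau-1}(\Sigma)$, which suffices both for Proposition \ref{isomorphism} (with $h=R_g/a_n\geq 0$, $\bar h=H_g/b_n\geq 0$) and, together with $R_g\in L^1(M)$, $H_g\in L^1(\Sigma)$, for the asymptotic expansion of $u_0$. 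With that correction, and the (routine but not free) justification of the expansion and of the conformal mass formula, your proof goes through.
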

\bp
As in the proof of \cite[Lemma 10.7]{LP}, we will use the variational characterization of the mass; see Proposition \ref{propvar}. 
We set $g_t=g+tk$, where $k$ is a compactly supported symmetric $2$-tensor on $M$. These metrics do not necessarily satisfy the positivity conditions on the scalar and mean curvatures but we remedy this as follows.
We consider the linear boundary value problem 
\begin{equation}\label{sistema:phi}
\begin{cases}
L_{g_t}u_t=R_gu_t\,,&\text{in}\:M\,,
\\
B_{g_t}u_t=H_gu_t\,,&\text{on}\:\Sigma\,,
\end{cases}
\end{equation}
for $t\in\mathbb R$.
If we write $u_t=1+v_t$ this is equivalent to
\begin{equation}\label{sistema:psi}
\begin{cases}
-a_n\Delta_{g_t}v_t+\gamma_t v_t=-\gamma_t\,,&\text{in}\:M\,,
\\
b_n\frac{\d v_t}{\d\eta_{g_t}}+\bar{\gamma_t} v_t=-\bar{\gamma_t}\,,&\text{on}\:\Sigma\,,
\end{cases}
\end{equation}
where $\gamma_t=R_{g_t}-R_g$ and $\bar{\gamma_t}=H_{g_t}-H_g$.  Since $\gamma_t$ and $\bar{\gamma_t}$ are both compactly supported and converge to zero in $C^k$ as $t\to 0$, we see that  $\|\gamma_t\|_{C^{0,\a}_{-\tau-2}(M)}\to 0$ and  $\|\bar{\gamma_t}\|_{C^{1,\a}_{-\tau-1}(\Sigma)}\to 0$ as $t\to 0$. Arguing as in the proof of Proposition \ref{conf:flat:assump}, for small $t$  we can solve (\ref{sistema:psi}) uniquely for $v_t\in C^{2,\a}_{-\tau}(M)$, with $\|v_t\|_{C^{2,\a}_{-\tau}(M)}\to 0$ as $t\to 0$. 
Thus, if we 
set $\widetilde{g}_t=u_t^{\frac{4}{n-2}}g_t$, it follows from (\ref{sistema:phi}) that
\begin{equation}\label{rel:conf}
R_{\widetilde{g}_t}=u_t^{-\frac{n+2}{n-2}}L_{g_t}u_t=
u_t^{-\frac{4}{n-2}}R_g\geq 0\,,
\:\:\:\:\:\:
H_{\widetilde{g}_t}=u_t^{-\frac{n}{n-2}}B_{g_t}u_t=u_t^{-\frac{2}{n-2}}H_g\geq 0\,.
\end{equation}

Notice that we have already proved that $\mathfrak m_{(M,g)}\geq 0$ for any $(M,g)$ as in Theorem \ref{mainm}. Hence,   the assumption $\mathfrak m_{(M,g)}=0$ means that $g$ is a minimum for the mass among the metrics $\widetilde{g}_t$, $|t|$ small.
On the other hand, if $\dot u=\frac{d}{dt}u_t|_{t=0}$ we easily see that 
$$
\frac{d}{d t}\Big|_{t=0}R_{\widetilde{g}_t}d M_{\widetilde{g}_t}=-\frac{4}{n-2}\dot{u}R_g d M_g+\frac{1}{2}R_g\left\langle\frac{4}{n-2}\dot{u}g+k,g\right\rangle d M_g,
$$
and
$$
\frac{d}{d t}\Big|_{t=0}2H_{\tilde{g}_t}d \Sigma_{\tilde{h}_t}=-\frac{4}{n-2}\dot{u}H_g d \Sigma_h+H_g\left\langle\frac{4}{n-2}\dot{u}g+k,g\right\rangle d \Sigma_h.
$$
Using Propositions \ref{propvar} and \ref{massinv}, and putting all these facts together, a straightforward computation gives 
\begin{eqnarray*}
0 & = & \frac{d}{dt}\Big|_{t=0}\mathfrak m_{(M,\widetilde g_t)}\\
&= & \int_M\left\langle k,{\rm Ric}_g\right\rangle d M_g+\int_{\Sigma}\left\langle k,A_g\right\rangle d \Sigma_h.
\end{eqnarray*}
Since this holds for any compactly supported $k$, the conclusion follows.
\ep

The proof of Theorem \ref{mainm} is completed using the following lemma. 
\begin{lemma}\label{ricci:rigidity} 
If $(M,g)$ as in Theorem \ref{mainm} satisfies ${\rm Ric}_g\geq 0$ and $A_g=0$ then it is  isometric to $\Rn$ with the standard flat metric.
\end{lemma}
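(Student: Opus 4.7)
The plan is to reduce the rigidity to a classical theorem for complete asymptotically flat Riemannian manifolds of nonnegative Ricci curvature, via a doubling construction along the (now totally geodesic) boundary. The main delicate point will be the verification that the doubled metric extends to a genuine $C^2$ Riemannian metric across $\Sigma$, since this lets one invoke the smooth Bishop--Gromov rigidity without any approximation argument.

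First I would construct the double $(\widetilde M,\widetilde g)$ of $(M,g)$ along $\Sigma$, as in the proofs of Propositions \ref{Sobolev:N} and \ref{Sobolev:D}. In Fermi coordinates $(x',x_n)$ based on $\Sigma$ one has $g_{nn}\equiv 1$ and $g_{n\alpha}\equiv 0$, while the second fundamental form reads $A_{\alpha\beta}=\pm\tfrac12\partial_n g_{\alpha\beta}|_{x_n=0}$. The hypothesis $A_g\equiv 0$ therefore forces the first normal derivatives of the tangential components of $g$ to vanish along $\Sigma$; under the even reflection $\widetilde g_{\alpha\beta}(x',x_n):=g_{\alpha\beta}(x',|x_n|)$ the odd first derivative drops out while the (even) second normal derivative is continuous, so the doubled metric is of class $C^2$ across $\Sigma$ (not merely $C^{1,1}$). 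As ${\rm Ric}$ involves only two derivatives of the metric, the hypothesis carries over to ${\rm Ric}_{\widetilde g}\geq 0$ on all of $\widetilde M$, and $(\widetilde M,\widetilde g)$ becomes a complete asymptotically flat Riemannian manifold with a single end diffeomorphic to $\R^n\setminus\overline B_1(0)$.

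Next I would apply Bishop--Gromov volume comparison to $(\widetilde M,\widetilde g)$: for any basepoint $p$,
\[
V(r):=\frac{\vol_{\widetilde g}(B_r(p))}{\omega_n r^n}
\]
is non-increasing in $r$, with $V(0^+)=1$. A direct asymptotic computation using $\widetilde g-\delta=O(r^{-\tau})$ with $\tau>0$ yields $\lim_{r\to\infty}V(r)=1$, and monotonicity then forces $V\equiv 1$. The equality case of Bishop--Gromov therefore identifies $(\widetilde M,\widetilde g)$ isometrically with $(\R^n,\delta)$; as a by-product $\widetilde M$ must have trivial topology, which automatically rules out any possible compact components of $\Sigma$.

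Finally, I would exploit the canonical involution $\sigma:\widetilde M\to\widetilde M$ interchanging the two copies of $M$: it is an isometry of $\widetilde g$ with fixed-point set exactly $\Sigma$. Transported via the isometry $(\widetilde M,\widetilde g)\cong(\R^n,\delta)$, $\sigma$ becomes an isometric involution of Euclidean space whose fixed set is a connected smooth hypersurface, which can only be the orthogonal reflection across an affine hyperplane. Hence $\Sigma$ corresponds to an affine hyperplane in $\R^n$ and $(M,g)$ is isometric to the closed half-space it bounds, namely $(\Rn,\delta)$.
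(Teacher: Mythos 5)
Your argument is correct and follows essentially the same route as the paper: double $(M,g)$ across the totally geodesic boundary to get a complete $C^{2,\alpha}$ asymptotically flat manifold with ${\rm Ric}\geq 0$, and invoke the classical rigidity (Bishop--Gromov with asymptotic volume ratio $1$) to identify the double with $(\mathbb R^n,\delta)$. The only difference is that you spell out what the paper leaves as ``well-known'' --- the $C^2$ regularity of the reflected metric, the volume-comparison rigidity, and the identification of $(M,g)$ with a half-space via the reflection involution --- all of which are fine.
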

\begin{proof}
The double $(\widetilde M,\widetilde g)$
of $(M,g)$ along its totally geodesic boundary $\Sigma$ is a complete $C^{2,\alpha}$ asymptotically flat manifold (with an empty boundary) satisfying ${\rm Ric}_{\widetilde{g}}\geq 0$, which  is well-known to be  isometric to $\mathbb R^n$ with the standard flat metric.
The result follows. 
\end{proof}

\section{Another proof of Theorem \ref{mainm}}
\label{anotherproof}

In this section we provide another proof of Theorem \ref{mainm} which is more in line with the classical arguments. Thus, in Subsection \ref{nleq7} we treat the case $n\leq 7$ and in Subsection \ref{setup} we treat the case when $M$ is spin. 
 
\subsection{The case $n\leq 7$}\label{nleq7}  
We present below another proof of Theorem \ref{mainm} for this case, following the ideas in \cite{SY1, SY2}.

Let us assume by contradiction that the mass ${\mathfrak m}_{(M,g)}$  is negative. We will construct an embedded asymptotically flat minimal hypersurface $\Hc\subset M$ and obtain a contradiction.  The boundary $\Sigma$ will work as a barrier when constructing $\Hc$.

By Proposition \ref{conf:flat:assump} we can assume that $(M,g)$ is asymptotically flat satisfying $g=u^{\frac{4}{n-2}}\delta$ and $R_g\equiv 0\equiv H_g$ near infinity. Hence, 
\begin{equation}\notag
\begin{cases}
\Delta u =0\,,&\text{in}\:\Rn\,,
\\
\displaystyle\frac{\d u}{\d x_n}=0\,,&\text{on}\:\d \Rn\,,
\end{cases}
\end{equation}
for $|x|$ large. Thus we can write 
\begin{equation}\label{exp:u}
u(x)=1+C|x|^{2-n}+O(|x|^{1-n})
\end{equation}
where $C=c(n){\mathfrak m}_{(M,g)}$ and $c(n)$ is a positive constant so that $C<0$ by our assumptions. (The proof of this asymptotic expansion is a simplified version of the arguments used for the function $v$ below. An alternative argument is a modification of the one in \cite{LP}, p.83.)

If $f_1$ and $f_2$ are positive functions decaying rapidly on $M$ and $\Sigma$ respectively, then we can find a solution to
\begin{equation}\label{eq:f1:f2}
\begin{cases}
L_{g}v=f_1\,,&\text{in}\:M\,,
\\
B_{g}v=f_2\,,&\text{on}\:\Sigma\,.
\end{cases}
\end{equation}
in the form $v(x)=1+\e|x|^{2-n}+O(|x|^{1-n})$, where we can make $\e>0$ arbitrarily small.  (See Section \ref{firstproof} for the definition of $L_g$ and $B_g$.)  In fact, as in the proof of Proposition \ref{conf:flat:assump} we rewrite the system (\ref{eq:f1:f2}) by making $v=1+\psi$, $\psi\in C^{2,\a}_{-\tau}(M)$, and obtain a new system in terms of $\psi$, which is solvable if $\widetilde{f}_1=f_1-R_g\in C^{2,\a}_{-\tau-2}(M)$ and $\widetilde{f}_2=f_2-H_g\in C^{1,\a}_{-\tau-1}(\Sigma)$. The solvability of this problem relies on the fact that $R_g,H_g\geq 0$, which ensures its uniqueness and allows us to use Proposition \ref{isomorphism}. Observe also that $\widetilde{f}_j$ has the same decay rate of $f_j$, $j=1,2$, because both $R_g$ and $H_g$ have compact support. 

In order to obtain the expansion for $v$, we rewrite the system for $\psi$ in terms of the background metric $u^{-\frac{4}{n-2}}g$ , which is Euclidean outside a compact set. Then we proceed as in the proof of Lemma \ref{lemma4} to estimate $|x|^{n-2}|\psi(x)|$ by a constant which can be assumed arbitrarily small by choosing $\widetilde{f}_1$ and $\widetilde{f}_2$  small with appropriate decay at infinity. The details are left to the reader.

In particular, in what follows we can assume that $R_g>0$ in $M$, $H_g>0$ in $\Sigma$, $g=u^{\frac{4}{n-2}}\delta$ in the end $M_{\infty}$, and $u$ has the asymptotic expansion (\ref{exp:u}) with $C<0$.

Let us consider the unit vector field $\nu=u^{-\frac{2}{n-2}}\frac{\d}{\d x_n}$ defined on $M_{\infty}$ and inward normal to $\Sigma\cap M_{\infty}$. We can extend $\nu$ to $M$ in such a way that it is still a unit vector on $M$ and still normal to $\Sigma$ outside $M_{\infty}$. 
As in \cite{schoen} we compute the divergence of $\nu$ to find that
$$
\text{div}_g \nu =-2(n-1)C\frac{x_n}{|x|^{n}}+O(|x|^{-n})\,,
$$ 
near infinity. So we can choose $a_0>0$ sufficiently large such that $\text{div}_g \nu (x)>0$ whenever $x_n\geq a_0$.

For $\s>0$ large and $a\geq 0$, we set
$$
\Gamma_{a,\s}=\{x=(\bar{x},x_n);\:|\bar x|=\s,\:x_n=a\}.
$$
Let $\Hc_{\s,a}\subset M$ be the $(n-1)$-hypersurface of least area having $\Gamma_{\s,a}$ as its boundary. This is possible because $H_g> 0$ along $\Sigma$.
We define
$$
A(\s)=\min\{\text{area}_g(\Hc_{\s,a});\:a\in[0,a_0]\}.
$$
We set $\Hc_{\s}=\Hc_{\s,a_{\s}}$ for $a_{\s}\in[0,a_0]$ such that  $\text{area}_g(\Hc_{\s,a})=A(\s)$.

For $R>0$ large, let us set $\Omega_{R}=\{x\in M;\:|x|<R\}$.
As in \cite{SY1}, we can choose $R_0$ large such that $x\mapsto |x|^2$ is a convex function for $|x|\geq R_0$. 
For each $\s$ large we choose $R_{\s}\geq R_0$ such that
$$
\{x=(\bar x,x_n)\in M_{\infty};\:|\bar x|=\s;\:x_n\leq a_0\}
\subset\Omega_{R_{\s}}.
$$
By the maximum principle we see that 
$\Hc_{\s}\subset \Omega_{R_{\s}}$.

Since we have changed the metric $g$ in such a way that $\Sigma$ has positive mean curvature, we have $\text{div}_g\nu<0$ along $\Sigma$. By continuity, one can choose $\e_{\s}>0$ small such that $\text{div}_g\nu(x)<0$ for any $x\in \Omega_{R_{\s}}$ such that $d_g(x,\Sigma)\leq \e_{\s}$.
In particular we can conclude that 
\begin{equation}\label{epsilon}
\Hc_{\s}\subset \{x\in \Omega_{R_{\s}};\:d_g(x,\Sigma)\geq \e_{\s}'\}.
\end{equation}
for some $0< \e_{\s}'< \e_{\s}$. This is done by means of the divergence theorem and the minimizing properties of $\Hc_{\sigma}$.
\begin{remark}{\rm{
If $M$ has more than one end, the same arguments apply observing that $|x|=R$, $R$ large, works as a barrier in the other ends as well, preventing $\Hc_{\s}$ from escaping to infinity.
}}
\end{remark}

Now the proof follows as in \cite{schoen}, observing that (\ref{epsilon}) ensures that the boundary does not interfere when calculating the area first and second variations formula for each $\Hc_{\s}$. The limiting area minimizing hypersurface $\Hc$, obtained as $\s\to\infty$, is asymptotically flat without boundary, and we will be able to change conformally the induced  metric on $\Hc$ to a scalar-flat metric. In dimensions $4\leq n\leq 7$, by analyzing the second variation formula of  area on $\Hc$ we conclude that its mass is negative, contradicting the classical version of the positive mass theorem. In dimension $n=3$, the contradiction is given by analyzing the integral of the Gaussian curvature of the surface $\Hc$  as well as its Euler number.

The rigidity statement follows as in the previous section.

\subsection{The case $M$ spin}\label{setup}
In this subsection we 
establish a Witten-type formula for the mass of  asymptotically flat spin manifolds in any dimension $n\geq 3$. As a consequence, we obtain another proof of Theorem \ref{mainm}  in the spin setting.

\begin{theorem}
\label{wittenform}
If $(M,g)$ is an asymptotically flat spin manifold of dimension $n\geq 3$ as in Theorem \ref{mainm} then \begin{equation}
\label{wittenform2}
\frac{1}{4}\mathfrak m_{(M,g)}=\int_M\left(|\nabla\psi|^2+\frac{R}{4}|\psi|^2\right)dM_g
+\frac{1}{2}\int_{\Sigma}H|\psi|^2d\Sigma_h,
\end{equation} 
where $\psi$ is a suitable nontrivial harmonic spinor globally defined on $M$.
\end{theorem}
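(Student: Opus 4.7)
The plan is to adapt Witten's classical proof of the positive mass theorem to the boundary geometry of $(M,g)$. Choose a spin structure and let $\dirac$ denote the Atiyah--Singer Dirac operator on the complex spinor bundle $SM\to M$. Invoking the harmonic-coordinate analysis of Section \ref{massgeo} (Proposition \ref{exisharmcoord} and Remark \ref{keyrem}), I may assume the asymptotic chart is chosen so that $g_{\alpha n}|_\Sigma=0$ near infinity, whereby the mass reduces to (\ref{keyremeq}). Along $\Sigma$ I would impose a local chirality/MIT-bag-type boundary condition $P\psi|_\Sigma=0$ (with $P=\tfrac12(I+i\eta\cdot)$ the natural spectral projector of $i\eta\cdot$, or a variant involving a chirality involution when $n$ is even). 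The algebraic property that drives the calculation is the hypersurface identity
\begin{equation*}
\mathrm{Re}\langle \nabla_\eta\psi + \eta\cdot \dirac\psi, \psi\rangle = \frac{H}{2}|\psi|^2\quad\text{whenever}\ P\psi = 0,
\end{equation*}
a standard consequence of the relation $\eta\cdot\dirac = D^\Sigma-\nabla_\eta-\tfrac{n-1}{2}H\,\eta\cdot$ between the ambient and intrinsic Dirac operators on the hypersurface.

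For any constant spinor $\psi_0$ on the model space $\Rn$, pulled back to a neighborhood of infinity and extended smoothly to all of $M$, the first step is to solve the boundary value problem
\begin{equation*}
\dirac\psi=0 \ \text{in}\ M,\quad P\psi=0\ \text{on}\ \Sigma,\quad \psi-\psi_0\in C^{2,\alpha}_{-\tau}(M).
\end{equation*}
The chirality condition renders $(\dirac,P)$ a local elliptic boundary value problem, and one adapts the weighted H\"older theory of Section \ref{massgeo} (Propositions \ref{isomorphism} and \ref{Sobolev:N}) to this first-order spinorial setting to establish Fredholmness of index zero in $C^{2,\alpha}_{-\tau}$. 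Surjectivity then reduces to injectivity, which is itself a consequence of the Lichnerowicz identity proved below: any rapidly decaying $\phi\in\ker(\dirac,P)$ satisfies
\begin{equation*}
\int_M\left(|\nabla\phi|^2 + \tfrac{R}{4}|\phi|^2\right)dM_g + \tfrac12\int_\Sigma H|\phi|^2\,d\Sigma_h = 0,
\end{equation*}
forcing $\phi\equiv 0$ under the hypotheses $R_g\geq 0$, $H_g\geq 0$.

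With $\psi$ in hand, apply the Lichnerowicz--Weitzenb\"ock formula $\dirac^2=\nabla^*\nabla+R/4$ on the compact region $M_r$ with boundary $\Sigma_r\cup\Sc^{n-1}_{r,+}$. Since $\dirac\psi=0$, standard integration by parts gives
\begin{equation*}
\int_{M_r}\left(|\nabla\psi|^2+\tfrac{R}{4}|\psi|^2\right)dM_g = \int_{\Sc^{n-1}_{r,+}}\mathrm{Re}\langle\nabla_\mu\psi+\mu\cdot\dirac\psi,\psi\rangle dS + \int_{\Sigma_r}\mathrm{Re}\langle\nabla_\eta\psi+\eta\cdot\dirac\psi,\psi\rangle d\Sigma_h.
\end{equation*}
The $\Sigma_r$-integral equals $\tfrac12\int_{\Sigma_r}H|\psi|^2\,d\Sigma_h$ by the boundary identity above, while the asymptotic expansion $\psi=\psi_0+O(r^{-\tau})$, $\nabla\psi=O(r^{-\tau-1})$ in the Witten computation gives
\begin{equation*}
\lim_{r\to\infty}\int_{\Sc^{n-1}_{r,+}}\mathrm{Re}\langle\nabla_\mu\psi+\mu\cdot\dirac\psi,\psi\rangle dS=\frac{|\psi_0|^2}{4}\lim_{r\to\infty}\int_{\Sc^{n-1}_{r,+}}(g_{ij,j}-g_{jj,i})\mu^i\,dS = \tfrac14|\psi_0|^2\mathfrak{m}_{(M,g)},
\end{equation*}
the last equality by Remark \ref{keyrem}. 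Normalizing $|\psi_0|=1$ yields (\ref{wittenform2}).

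The main obstacle is the existence step: developing the weighted Fredholm theory for the mixed boundary value problem $(\dirac,P)$ -- the first-order spinorial analogue of Propositions \ref{isomorphism}--\ref{Sobolev:N} -- and verifying the compatibility of the chirality projector with the asymptotically flat structure (especially in odd $n$, where the spin geometry of the boundary is more delicate). Once this is in place, the computation follows the well-established Witten template adapted to a hemispherical limit.
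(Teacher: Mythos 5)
Your strategy is the same as the paper's (the chirality condition $\psi_-=0$, a harmonic spinor asymptotic to a constant spinor, the integrated Lichnerowicz formula on $M_r$, and the chart with $g_{\alpha n}=0$ along $\Sigma$ from Remark \ref{keyrem}), but the two steps you treat as routine are exactly where the content lies. The limit over the hemisphere is \emph{not} the "well-established Witten template": when you integrate by parts on $\Sc_{r,+}^{n-1}$ to convert $\Re\langle\nabla_\mu\psi,\psi\rangle$ into the ADM-type integrand, the hemisphere has a boundary, and an extra term appears, namely an integral over the equator $\Sc_r^{n-2}$ of the $(n-2)$-form $\omega=\langle[\eh_l\cdot,\eh_m\cdot]\phi,\xi\rangle\,\eh_l\lrcorner\eh_m\lrcorner\, dM$ (with $\xi=\psi-\phi$). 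Showing that this equator term vanishes as $r\to+\infty$ is the genuinely new point compared with the boundaryless case, and it is precisely where the normalization $g_{\alpha n}=0$ near infinity and the Clifford relations (which force the tangential--normal products $\partial_\alpha\cdot\partial_n\cdot$ to be of size $O(r^{-\tau})$ along $\Sigma$) are used, together with the decay of $\xi$. You invoke the right chart but never use it for this purpose, so your assertion that the hemisphere limit equals $\tfrac14\mathfrak m_{(M,g)}$ is unproved as it stands.

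The existence step you yourself flag is also a real gap: Fredholmness of index zero for the first-order problem $(\dirac,P)$ on a manifold with a \emph{noncompact} boundary does not follow by transplanting the scalar second-order theory of Propositions \ref{isomorphism} and \ref{Sobolev:N}, and your reduction of surjectivity to injectivity presupposes that unproved index statement. The paper sidesteps this entirely by quoting a ready-made solvability result of Grosse--Nakad (\cite[Corollary 3.16]{GN}), producing $\xi\in L^2_1(\mathbb SM)$ with $D\xi=-D\phi$ and $\xi_-=0$; you should either cite such a result or actually supply the weighted spinorial analysis. Finally, your key boundary identity has the wrong sign relative to the paper's conventions: with $\eta$ the outward normal and $H=\mathrm{div}_g\eta$, the correct statement under $\psi_-=0$ is $\Re\langle\nabla_\eta\psi+\eta\cdot\dirac\psi,\psi\rangle=-\tfrac{H}{2}|\psi|^2$ (and the relation you cite, with the extra $\eta\cdot$ multiplying the mean-curvature term, would make the real part of that term vanish identically under your boundary condition, so it cannot yield your identity); with the sign you wrote, your assembled computation gives $\tfrac14\mathfrak m_{(M,g)}=\int_M\bigl(|\nabla\psi|^2+\tfrac{R}{4}|\psi|^2\bigr)dM_g-\tfrac12\int_\Sigma H|\psi|^2\,d\Sigma_h$, the opposite of (\ref{wittenform2}) and useless for positivity.
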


We start by recalling some basic facts on spinors in manifolds with boundary. The reader will find a more detailed account of
this preparatory material in \cite{F, HMZ, HMR}.

\subsubsection{The integral Lichnerowicz formula on spin manifolds with boundary}\label{integ}
We consider a spin manifold $\Omega$ of dimension $n\geq 3$ endowed with a Riemannian metric $g$. We denote by $\mathbb S\Omega$ the spin bundle of $\Omega$ and by $\nabla$ both the Levi-Civita connection of $T\Omega$
and its lift to $\mathbb S\Omega$. The corresponding Dirac operator $D:\Gamma(\mathbb S\Omega)\to\Gamma(\mathbb S\Omega)$ is locally given by
\begin{equation}
\label{defdirac}
D\psi=\sum_{i=1}^n \gamma(\eh_i)\nabla_{\eh_i}\psi,\quad \psi\in\Gamma(\mathbb SM),
\end{equation}
where $\{\eh_i\}_{i=1}^n$ is a local orthonormal frame and $\gamma:T\Omega\times \mathbb S \Omega\to\mathbb S\Omega$ is the Clifford product.

If $\Sc$ is the boundary of $\Omega$, which we assume oriented by its inner unit normal $\nu=-\eta$, then, given a spinor $\psi\in\Gamma(\mathbb S\Omega)$, a well-known computation gives the integral Lichnerowicz formula:
\begin{equation}\label{parts}
\int_\Omega\left(|\nabla \varphi|^2-|D\varphi|^2+\frac{R}{4}|\varphi|^2\right)d\Omega=-\int_{\Sc}
\left\langle B_\nu\varphi,\varphi\right\rangle d\Sc,
\end{equation}
where 
\[
B_X=\gamma(X) D+ \nabla_X,\quad X\in \Gamma(TM).
\]

For our purposes it will be convenient to rewrite the left-hand side above in terms of the mean curvature $H$ of $\Sc$.
We first note that $\Sc$  carries the spin bundle $\mathbb S\Omega|_\Sc$, obtained by restricting  $\mathbb S\Omega$ to $\Sc$. This becomes a Dirac bundle if its Clifford product is
$$
\gamma^{\intercal}(X)\varphi=\gamma(X)\gamma(\nu) \varphi,\quad X\in \Gamma(T\Sc), \quad \varphi\in \Gamma(\mathbb S\Omega|_\Sc),
$$
and its connection is
\begin{equation}\label{conn0}
\nabla^{\intercal}_X\varphi  =  \nabla_X\varphi-\frac{1}{2}\gamma^{\intercal}(AX)\varphi,
\end{equation}
where $A$ is the shape operator of $\Sc$.
The corresponding Dirac operator $D^{\intercal}:\Gamma(\mathbb S\Omega|_\Sc)\to\Gamma(\mathbb S\Omega|_\Sc)$ is
$$
D^{\intercal}\varphi=\sum_{j=1}^{n-1}\gamma^{\intercal}(f_j)\nabla^{\intercal}_{f_j}\varphi.
$$
A well-known computation shows that 
\begin{equation}\label{par}
D^{\intercal}\varphi=\frac{H}{2}\varphi-B_\nu\varphi,
\end{equation}
so that (\ref{parts}) is equivalent to
\begin{equation}\label{parts2}
\int_\Omega\left(|\nabla \varphi|^2-|D\varphi|^2+\frac{R_g}{4}|\varphi|^2\right)d\Omega=\int_{\Sc}
\left\langle D^{\intercal}\varphi-\frac{H}{2}\varphi,\varphi\right\rangle d\Sc.
\end{equation}

\subsubsection{A boundary value problem for spinors}\label{bvpspin}
Here we follow \cite{GN} and discuss  a certain boundary value problem for spinors on an asymptotically flat manifold with a non-compact boundary $\Sigma$. We start by observing that if $\nu$ is the inward unit normal to $\Sigma$ then the linear map $\varepsilon=i\gamma(\nu):\mathbb SM|_\Sigma\to\mathbb SM|_\Sigma$ is a self-adjoint involution. Thus, we have a (pointwise) decomposition,
\begin{equation}\label{ortho}
\mathbb SM|_\Sigma= V_+\oplus V_-,
\end{equation}
corresponding to the eigenbundles of $\varepsilon$, that is, $V_\pm=\{\varphi\in \mathbb SM|_\Sigma;\varepsilon \varphi=\pm\varphi\}$. We denote the corresponding projections by
$P_\pm:\mathbb SM|_\Sigma\to V_{\pm}$,
\[
P_\pm=\frac{1}{2}\left({\rm Id}\pm \varepsilon\right),
\]
and we set $\varphi=\varphi_++\varphi_-$, $\varphi_{\pm}=P_\pm\varphi$.
It is easy to check that $D^\intercal P_\pm=P_\mp D^\intercal$.

\begin{proposition}\label{nice}
If $\varphi\in\Gamma(V_\pm)$ then $\langle D^\intercal \varphi,\varphi\rangle=0$.
\end{proposition}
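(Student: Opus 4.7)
The plan is to exploit two structural facts about the boundary involution $\varepsilon = i\gamma(\nu)$: that it is a pointwise self-adjoint involution (so the decomposition in \eqref{ortho} is pointwise orthogonal), and that it anticommutes with $D^\intercal$ in the precise sense already recorded, namely $D^\intercal P_\pm = P_\mp D^\intercal$. Once these are in hand, the statement is a one-line consequence.

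First I would recall why $\varepsilon$ is a self-adjoint involution on $\mathbb S M|_\Sigma$. Clifford multiplication by a real unit vector is skew-Hermitian with respect to the natural Hermitian inner product on spinors, so $\gamma(\nu)^\ast = -\gamma(\nu)$; multiplying by $i$ gives $\varepsilon^\ast = -i\,\gamma(\nu)^\ast = i\gamma(\nu) = \varepsilon$. Likewise $\varepsilon^2 = -\gamma(\nu)^2 = 1$. Therefore $\varepsilon$ has eigenvalues $\pm 1$ and its eigenspaces $V_+$ and $V_-$ are pointwise orthogonal; in particular $\langle \psi_+, \psi_-\rangle = 0$ whenever $\psi_\pm \in V_\pm$.

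Next, suppose $\varphi \in \Gamma(V_+)$, so that $\varphi = P_+\varphi$. Using the intertwining property $D^\intercal P_+ = P_- D^\intercal$ just noted in the text, we obtain
\begin{equation*}
D^\intercal \varphi = D^\intercal P_+ \varphi = P_-\bigl( D^\intercal \varphi \bigr) \in \Gamma(V_-).
\end{equation*}
Since $\varphi$ is a pointwise section of $V_+$ and $D^\intercal \varphi$ is a pointwise section of $V_-$, the pointwise orthogonality of $V_+$ and $V_-$ gives $\langle D^\intercal \varphi, \varphi\rangle = 0$. The argument for $\varphi \in \Gamma(V_-)$ is identical after swapping the roles of the projectors.

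There is no real obstacle here; the content of the proposition is simply that $D^\intercal$ reverses the $\varepsilon$-grading and that the grading is orthogonal. The only thing to be careful about is the sign conventions for the Hermitian product on $\mathbb S M$ and the choice of inward vs.\ outward normal, both of which are fixed in Subsection \ref{bvpspin}.
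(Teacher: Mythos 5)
Your proof is correct and follows essentially the same route as the paper: both use the intertwining relation $D^\intercal P_\pm = P_\mp D^\intercal$ to place $D^\intercal\varphi$ in the opposite eigenbundle and then invoke the pointwise orthogonality of the decomposition $\mathbb{S}M|_\Sigma = V_+\oplus V_-$. Your additional remarks justifying that $\varepsilon=i\gamma(\nu)$ is a self-adjoint involution merely spell out facts the paper records just before the proposition.
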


\begin{proof}
We compute
\[
\langle D^\intercal \varphi,\varphi\rangle=\langle D^\intercal P_{\pm}\varphi,P_\pm\varphi\rangle=\langle P_\mp D^\intercal \varphi,P_\pm\varphi\rangle=0,
\]
where in the last step we used that the decomposition (\ref{ortho}) is orthogonal.
\end{proof}

It turns out that the projections $P_\pm$ define nice boundary conditions for the Dirac operator $D$ of $M$. More precisely, 
the following result holds.

\begin{proposition}\label{bdp}
If $(M,g)$ is as in Theorem \ref{mainm} and $\phi\in\Gamma(\mathbb SM)$ satisfies $\nabla\phi\in L^2(\mathbb SM)$, then there exists a unique $\xi\in L^2_1(\mathbb SM)$
solving the boundary value problem
\[
\left\{
\begin{array}{ccccc}
D\xi & = & -D\phi & {\rm in} & M\\
\xi_- & = & 0 & {\rm on} & \Sigma
\end{array}
\right.
\]
\end{proposition}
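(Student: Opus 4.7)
The plan is to solve for $\xi$ by a Lax--Milgram argument in a suitable Hilbert space of spinors adapted to the chiral boundary condition, with coercivity supplied by the integral Lichnerowicz formula (\ref{parts2}) together with the hypotheses $R_g \geq 0$ and $H_g \geq 0$.

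Concretely, I would let $\mathcal{H}$ denote the completion, under the norm $\|\varphi\|_{\mathcal{H}}^2 = \int_M |\nabla\varphi|^2 \, dM_g$, of the space of smooth compactly supported spinors $\varphi$ on $M$ satisfying $P_-\varphi|_\Sigma = 0$. The Sobolev inequality on the asymptotically flat manifold $(M,g)$ ensures that elements of $\mathcal{H}$ lie in $L^{2n/(n-2)}$ and that the chiral trace condition along the non-compact boundary persists in the limit. For any such $\varphi$, the condition $\varphi \in V_+$ along $\Sigma$ combines with Proposition \ref{nice} to annihilate the boundary $D^\intercal$ term, so (\ref{parts2}) yields
\begin{equation*}
\int_M |D\varphi|^2 \, dM_g = \int_M |\nabla\varphi|^2 \, dM_g + \int_M \frac{R_g}{4} |\varphi|^2 \, dM_g + \frac{1}{2}\int_\Sigma H_g |\varphi|^2 \, d\Sigma_h \geq \|\varphi\|_{\mathcal{H}}^2.
\end{equation*}
Hence the symmetric bilinear form $a(\xi,\eta) = \int_M \langle D\xi, D\eta\rangle \, dM_g$ is continuous and coercive on $\mathcal{H}$.

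Because $\nabla\phi \in L^2$, one has $D\phi \in L^2$, so the linear functional $\eta \mapsto -\int_M \langle D\phi, D\eta\rangle \, dM_g$ is bounded on $\mathcal{H}$. Lax--Milgram then produces a unique $\xi \in \mathcal{H}$ with $a(\xi,\eta) = -\int_M \langle D\phi, D\eta\rangle \, dM_g$ for all $\eta \in \mathcal{H}$. Testing against compactly supported $\eta$ vanishing near $\Sigma$ shows $D^2(\xi+\phi) = 0$ distributionally in $M\setminus\Sigma$, while admitting $\eta$ that meets $\Sigma$ (still with $\eta_- = 0$) produces the complementary natural boundary condition $(D\xi + D\phi)_+ = 0$. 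Interior elliptic regularity for the Dirac operator and boundary regularity for the local chiral boundary condition $(P_-, P_+ D)$ then promote $\xi$ to $L^2_1(\mathbb{S}M)$ and give $D\xi = -D\phi$ strongly. Uniqueness is immediate from the coercivity estimate, since any two solutions differ by a kernel element of $a(\cdot,\cdot)$ in $\mathcal{H}$.

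The main obstacle I foresee is the global functional-analytic setup in the presence of a non-compact boundary: verifying that $\mathcal{H}$ embeds into $L^2_1(\mathbb{S}M)$ with a well-defined continuous trace along $\Sigma$, and that the chiral projection $P_-$ really defines an elliptic local boundary condition under which the Dirac operator is Fredholm with the expected regularity up to $\Sigma$. Once this framework is in place, the heart of the argument --- the coercivity estimate --- follows cleanly from (\ref{parts2}), Proposition \ref{nice}, and the sign hypotheses $R_g \geq 0$, $H_g \geq 0$, so these geometric assumptions are precisely what makes the whole scheme succeed.
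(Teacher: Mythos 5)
Your route is genuinely different from the paper's: the paper merely observes that $\nabla\phi\in L^2$ forces $D\phi\in L^2$ and then invokes \cite[Corollary 3.16]{GN}, where the functional-analytic theory of the chiral boundary condition along a non-compact boundary (traces, ellipticity, regularity up to $\Sigma$, weighted estimates) is developed; you instead sketch a self-contained Lax--Milgram argument for the bilinear form $a(\xi,\eta)=\int_M\langle D\xi,D\eta\rangle\,dM_g$ on spinors with $\xi_-=0$, whose coercivity does indeed follow from (\ref{parts2}), Proposition \ref{nice} and $R_g\ge 0$, $H_g\ge 0$. That part is fine, and it is essentially the classical Witten--Parker--Taubes scheme adapted to the boundary condition $P_-$.

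However, there is a genuine gap at the final step. The Lax--Milgram solution satisfies only the weak Euler--Lagrange equation $\int_M\langle D\xi+D\phi, D\eta\rangle\,dM_g=0$ for all admissible $\eta$; elliptic regularity (interior or boundary) can improve smoothness, but it cannot convert this second-order information into the first-order equation $D\xi=-D\phi$. What the weak equation says is that $\Theta:=D\xi+D\phi\in L^2$ is a (weak, hence smooth) harmonic spinor satisfying the complementary condition $\Theta_+=0$ on $\Sigma$, i.e.\ it lies in the kernel of the adjoint boundary value problem. You still must prove the vanishing theorem $\Theta\equiv 0$: apply (\ref{parts2}) to $\chi_r\Theta$ for a cutoff $\chi_r$ (Proposition \ref{nice} kills the $D^\intercal$ term for sections of $V_-$ just as for $V_+$), and use $D\Theta=0$, $R_g\ge 0$, $H_g\ge 0$ and $\Theta\in L^2$ to force $\nabla\Theta=0$ and then $\Theta=0$ on the asymptotically flat $M$. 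This is where the sign hypotheses enter a second time; without this step your argument only shows that $\xi+\phi$ solves a second-order mixed problem, not the Dirac equation. Two smaller points: uniqueness must be proved in $L^2_1(\mathbb SM)$ as stated, not merely in your space $\mathcal H$ (the same cutoff Lichnerowicz argument handles it); and membership $\xi\in L^2_1(\mathbb SM)$ does not follow from the Sobolev embedding alone, since elements of the completion $\mathcal H$ are a priori only in $L^{2n/(n-2)}$ --- this, together with the trace and boundary-regularity theory for the chiral condition on a non-compact $\Sigma$ that you yourself flag as the ``main obstacle'', is precisely what the paper outsources to \cite{GN}.
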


\begin{proof}
The assumption $\nabla \phi\in L^2(\mathbb SM)$
implies, via (\ref{defdirac}) and Cauchy-Schwarz, that  
$D \phi\in L^2(\mathbb SM)$. The result is then an immediate consequence of   
\cite[Corollary 3.16]{GN}.
\end{proof}
  
\subsubsection{The proof  of the Witten-type mass formula}\label{formmass}

In this subsection we prove Theorem \ref{wittenform} by showing that the mass formula (\ref{wittenform2}) holds true. 
As already mentioned above, our proof adapts Witten's well-known argument as reported in \cite{LP}
to  the class of asymptotically flat manifolds we consider here. 

We first claim that, starting from an arbitrary asymptotically flat coordinate system $\{x_i\}$, we can always produce another such coordinate system $\{x_i'\}$ such that $g_{\alpha n}=0$ along $\Sigma$ near infinity; see Remark \ref{keyrem}.   
In effect, it follows from (\ref{unitnormalexp}) that 
$\nabla^{l}(\eta+\partial_{x_n})=O(r^{-\tau-l})$ for $l=0,1,2$. Also, from (\ref{qual}) we see that $A=O(r^{-\tau-1})$ and $\nabla A=O(r^{-\tau-2})$. Thus, the claim is verified if we choose $\{x_i'\}$ so that $x_\alpha'=x_\alpha$ and $\partial_{x_n'}=-\eta$ along $\Sigma$ and extend this to the whole asymptotic region in the obvious manner. In such a coordinate system, we can use the simplified expression (\ref{keyremeq}) to compute $\mathfrak m_{(M,g)}$.  

With this preliminary remark at hand, we start the proof by fixing a {\em constant} spinor $\phi$ with respect to the given asymptotically flat chart, which means that $\partial_i\phi=0$ in the asymptotic region. Moreover, we may assume that $|{\phi}|\to 1$ and $\phi_-=0$ along the boundary of this region. We extend $\phi$ as zero to the rest of $\Sigma$, so that $\phi_-=0$ everywhere, and finally we extend $\phi$ to the rest of $M$ in an arbitrary manner. 
The well-known formula for the spin connection shows that  
$\nabla \phi\in L^2(\mathbb SM)$,  so we may find $\xi\in L^2_1(\mathbb SM)$ as in Proposition \ref{bdp}. It is immediate that 
\begin{equation}
\label{decompo}
\psi=\xi+\phi
\end{equation}  
satisfies 
\[
\left\{
\begin{array}{ccccc}
D\psi & = & 0 & {\rm in} & M\\
\psi_- & = & 0 & {\rm on} & \Sigma
\end{array}
\right.
\]

We now apply (\ref{parts})-(\ref{parts2}) in the usual way to the region $M_r$ with boundary  $\Sigma_r\cup \Sc_{r,+}^{n-1}$  to obtain
\begin{eqnarray*}
\int_{M_r}\left(|\nabla \psi|^2+\frac{R}{4}|\psi|^2\right)dM 
& = & \int_{\Sigma_r}\left\langle D^{\intercal}\psi,\psi\right\rangle d\Sigma_r-
 \frac{1}{2} \int_{\Sigma_r}H|\psi|^2 d\Sigma_r\\
& & \quad +
      \Re\int_{\Sc_{r,+}^{n-1}}\left\langle B_\nu\psi,\psi\right\rangle d\Sc_{r,+}^{n-1},
\end{eqnarray*}
where $\eta=-\nu$ is the outward unit normal and $\Re$ denotes  real part. 
The boundary condition $\psi_-=0$ implies, via Proposition \ref{nice}, that the first integral in the right-hand side vanishes. 
By sending $r\to +\infty$ we get 
\begin{eqnarray*}
\int_{M}\left(|\nabla \psi|^2+\frac{R}{4}|\psi|^2\right)dM 
& = & -\frac{1}{2}\int_{\Sigma}H|\psi|^2 d\Sigma_r\\
& & \quad +
      \lim_{r\to +\infty}\Re\int_{\Sc_{r,+}^{n-1}}\left\langle \nabla_\eta\psi,\psi\right\rangle d\Sc_{r,+}^{n-1},
\end{eqnarray*}
so we must check that 
\begin{equation}
\label{checkf}
\lim_{r\to +\infty}\Re\int_{\Sc_{r,+}^{n-1}}\left\langle \nabla_\eta\psi,\psi\right\rangle d\Sc_{r,+}^{n-1}=\frac{1}{4}\mathfrak m_{(M,g)}.
\end{equation}

We use (\ref{decompo}) to split the integral as
\begin{eqnarray*}
\Re\int_{\Sc_{r,+}^{n-1}}\left\langle \nabla_\eta\psi,\psi\right\rangle d\Sc_{r,+}^{n-1} & = & \Re\int_{\Sc_{r,+}^{n-1}}\left\langle \nabla_\eta\phi,\phi\right\rangle  d\Sc_{r,+}^{n-1}+\Re\int_{\Sc_{r,+}^{n-1}}\left\langle \nabla_\eta\phi,\xi\right\rangle d\Sc_{r,+}^{n-1}\\
& & \quad + \Re\int_{\Sc_{r,+}^{n-1}}\left\langle \nabla_\eta\xi,\xi\right\rangle d\Sc_{r,+}^{n-1} +\Re\int_{\Sc_{r,+}^{n-1}}\left\langle \nabla_\eta\xi,\phi\right\rangle d\Sc_{r,+}^{n-1}.
\end{eqnarray*}
As explained in \cite{LP}, algebraic cancellations and the decay properties of $\nabla\phi$ and $\xi$ imply that the first three terms eventually vanish at infinity, so we must evaluate the fourth one as $r\to +\infty$. 

In order to handle this  limit  we note that asymptotic flatness means that 
\[
g_{ij}=\delta_{ij}+a_{ij}, \quad a_{ij}=O(r^{-\tau}). 
\] 
Hence, the coordinate frame $\partial_i$ can be orthonormalized to yield 
\[
\eh_i=\partial_i-\frac{1}{2}a_{ij}\partial_j+O(r^{-\tau}),
\]
 which gives 
\begin{equation}\label{commut}
\eh_i\cdot\eh_j\cdot=\partial_i\cdot\partial_j\cdot +O(r^{-\tau}),
\end{equation}
where from now on we represent Clifford product by a dot. Following \cite{LP} we introduce the $(n-2)$-form
\[
\omega=\left\langle[\eh_l\cdot,\eh_m\cdot]\phi,\xi\right\rangle \eh_l\lrcorner\eh_m\lrcorner dM.
\]
A straightforward computation gives 
\[
d\omega=-4\left(\langle B_{\eh_l}\phi,\xi\rangle-\langle \phi,B_{\eh_l}\xi\rangle\right){\eh_l}\lrcorner dM,
\]
where
\begin{equation}\label{lem3}
B_{\eh_l}=\nabla_{\eh_l}+\eh_l\cdot D=(\delta_{lm}+\eh_l\cdot\eh_m\cdot)\nabla_{\eh_m}=\frac{1}{2}[\eh_l\cdot,\eh_m\cdot]\nabla_{\eh_m}.
\end{equation}
In particular,
\begin{equation}\label{lem4}
\int_{\Sc_{r,+}^{n-1}}\langle B_{\eh_l}\xi,\phi\rangle {\eh_l}\lrcorner dM - \int_{\Sc_{r,+}^{n-1}}\langle B_{\eh_l}\phi,\xi\rangle {\eh_l}\lrcorner dM= \frac{1}{4}\int_{\Sc_{r}^{n-2}}\omega.
\end{equation}

In Witten's original argument, the boundary term in the right-hand side of (\ref{lem4}) vanishes because the integration in the left-hand side  is performed over a closed sphere. In our case, $\Sc_{r,+}^{n-1}$ is a hemisphere and this terms contributes with an integral over $\Sc_r^{n-2}=\partial \Sc_{r,+}^{n-1}$ which, as we shall see, vanishes at infinity. 
To see this we integrate by parts to get
\begin{eqnarray*}
\Re\int_{\Sc_{r,+}^{n-1}}\left\langle \nabla_{\eh_l}\xi,\phi\right\rangle{\eh_l}\lrcorner dM & = & \Re\int_{\Sc_{r,+}^{n-1}}\left\langle (B_{\eh_l}-\eh_l\cdot D)\xi,\phi\right\rangle{\eh_l}\lrcorner dM\\
& = & \Re\int_{\Sc_{r,+}^{n-1}}\left\langle B_{\eh_l}\phi,\xi\right\rangle{\eh_l}\lrcorner dM + \frac{1}{4}\Re\int_{\Sc_r^{n-2}}\omega\\
 & & \quad - \Re\int_{\Sc_{r,+}^{n-1}}\left\langle \eh_l\cdot D\xi,\phi\right\rangle{\eh_l}\lrcorner dM\\
 & = & \Re\int_{\Sc_{r,+}^{n-1}}\left\langle B_{\eh_l}\phi,\xi\right\rangle{\eh_l}\lrcorner dM + \frac{1}{4}\Re\int_{\Sc_r^{n-2}}\omega\\
 & & \quad  +\Re\int_{\Sc_{r,+}^{n-1}}\left\langle \eh_l\cdot D\phi,\phi\right\rangle{\eh_l}\lrcorner dM,
\end{eqnarray*}
where in the last step we used (\ref{decompo}) and the fact that $\psi$ is harmonic. Again due to the decay properties, the first integral in the right-hand side above vanishes at infinity. Also, the standard computation as in \cite{LP} shows that
\[
\lim_{r\to +\infty}\Re\int_{\Sc_{r,+}^{n-1}}\left\langle \eh_l\cdot D\phi,\phi\right\rangle{\eh_l}\lrcorner dM=\lim_{r\to +\infty}\frac{1}{4}\int_{\Sc_{r,+}^{n-1}}(g_{ij,j}-g_{jj,i})\mu^i d\Sc_{r,+}^{n-1}.
\]
Thus, 
it remains to check that
\begin{equation}\label{checkform}
\lim_{r\to +\infty}\Re\int_{\Sc_r^{n-2}}\omega=
0.
\end{equation}

Using (\ref{commut}) and 
restricting to $\Sigma$ in the asymptotic region we have
\begin{equation}\label{intzero}
\omega= 4\langle \partial_\alpha\cdot\partial_n\cdot \phi,\xi\rangle\partial_\alpha\lrcorner\partial_n dM + \langle O(r^{-\tau})\cdot \phi,\xi\rangle \partial_\alpha\lrcorner\partial_n dM.
\end{equation}
Since $\partial_\alpha\partial_n\lrcorner dM=d\Sc_r^{n-2}=O(r^{n-2})$ we see that the last term in the right-hand side integrates to zero at infinity. 
On the other hand, 
if $\dag$ means transpose conjugation then 
$
(\eh_\alpha\cdot \eh_n\cdot)^\dag=\eh_n\cdot\eh_\alpha\cdot
$
and hence 
$
(\partial_\alpha\cdot \partial_n\cdot)^\dag=\partial_n\cdot\partial_\alpha\cdot
+O(r^{-\tau})
$ by (\ref{commut}).
By using Clifford relations in the asymptotic region we get
\begin{eqnarray*}
0 & = & -2g_{\alpha n}\\
  & = & \partial_\alpha\cdot \partial_n\cdot+ \partial_n\cdot\partial_\alpha\cdot\\
  & = & \partial_\alpha\cdot \partial_n\cdot+(\partial_\alpha\cdot \partial_n\cdot)^\dag+ O(r^{-\tau}),
\end{eqnarray*}
which gives $\partial_\alpha\cdot \partial_n\cdot=O(r^{-\tau})$.  
Thus, the first term in the right-hand side of (\ref{intzero}) also integrates to zero as $r\to+\infty$.  
This completes the proof of Theorem  \ref{wittenform}.

With Theorem \ref{wittenform} at hand, we can easily produce a proof of Theorem \ref{mainm} in the spin case. First, it is immediate from (\ref{wittenform2}) that  a spin manifold  as in Theorem \ref{mainm} satisfies the mass inequality $\mathfrak m_{(M,g)}\geq 0$. Moreover, if $\mathfrak m_{(m,g)}=0$ then $(M,g)$ carries a non-trivial parallel spinor, say $\psi$. In particular, $g$ is Ricci flat. Also, since $i\nu\cdot \psi=\psi$ along $\Sigma$, we see after differentiation that $ AX\cdot \psi=0$ for any $X$ tangent to $\Sigma$. 
Since $(M,g)$ actually carries as many parallel spinors as the model space $(\mathbb R^n_+,\delta)$, we conclude that $\Sigma$ is totally geodesic and the rigidity statement follows from Lemma \ref{ricci:rigidity}.

\setcounter{equation}{0}
\appendix\section{\\ The proof of Proposition  \ref{isomorphism}}
\label{prooftech1}

In this technical appendix  we present a proof of Proposition \ref{isomorphism}. The argument follows from a series of lemmas which taken together establish the  mapping properties of the operator $T$ appearing in the proposition. Our proof is inspired by the ideas in \cite{CSCB}, where the case of manifolds without boundary is treated.     

\begin{lemma}\label{lemma1}
If $2-n<\gamma< 0$ there exists $C=C(n)>0$ such that, for all $u\in C^{2}_{\gamma}(\Rn)$, we have 
$$
\|u\|_{C^{0}_{\gamma}(\Rn)}\leq C\|\Delta u\|_{C^{0}_{\gamma-2}(\Rn)}+C\left\|\frac{\d u}{\d x_n}\right\|_{C^{0}_{\gamma-1}(\d\Rn)}.
$$
\end{lemma}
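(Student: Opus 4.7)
My plan is to establish this by representing $u$ via the Neumann Green's function for the half-space and then reducing the bound to two elementary scale-invariant integral estimates. Let $G(x,y)=c_n\bigl(|x-y|^{2-n}+|x-y^{\ast}|^{2-n}\bigr)$, where $y^{\ast}=(y',-y_n)$ is the reflection of $y$ across $\partial\mathbb R^n_+$; this $G$ satisfies $-\Delta_y G(x,\cdot)=\delta_x$ in $\mathbb R^n_+$ and $\partial G/\partial y_n=0$ on $\{y_n=0\}$. Applying Green's identity on the half-ball $B_R^+$ and sending $R\to\infty$ yields a representation of the form
\[
u(x)=-\int_{\mathbb R^n_+}G(x,y)\,\Delta u(y)\,dy \;+\;\int_{\partial\mathbb R^n_+}G(x,y)\,\frac{\partial u}{\partial y_n}(y)\,dy',
\]
up to sign conventions. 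The hemisphere terms in the limit vanish because $u\in C^2_\gamma$ gives $|u|=O(r^\gamma)$ and $|\nabla u|=O(r^{\gamma-1})$, which paired with the $O(R^{2-n})$ and $O(R^{1-n})$ decay of $G$ and $\partial_\nu G$ produce quantities of size $O(R^\gamma)\to 0$.

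With the representation in hand, use the elementary inequality $|x-y^{\ast}|\ge|x-y|$ for $x,y\in\mathbb R^n_+$ to bound $G(x,y)\le 2c_n|x-y|^{2-n}$. The desired estimate $r(x)^{-\gamma}|u(x)|\le C(\|\Delta u\|_{C^0_{\gamma-2}(\mathbb R^n_+)}+\|\partial u/\partial x_n\|_{C^0_{\gamma-1}(\partial\mathbb R^n_+)})$ then reduces to verifying the two weighted bounds
\[
\int_{\mathbb R^n_+}|x-y|^{2-n}|y|^{\gamma-2}\,dy\le C|x|^\gamma,\qquad \int_{\partial\mathbb R^n_+}|x-y|^{2-n}|y|^{\gamma-1}\,dy'\le C|x|^\gamma.
\]
Both are proved by the dilation $y=|x|z$, which turns each integral into $|x|^\gamma$ times a scale-invariant integral over $\mathbb R^n_+$ (respectively over $\partial\mathbb R^n_+$) depending continuously on the unit vector $x/|x|\in\mathbb S^{n-1}\cap\overline{\mathbb R^n_+}$, hence uniformly bounded on this compact set. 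The hypothesis $2-n<\gamma<0$ is exactly what is needed to make these scale-invariant integrals converge: the upper bound $\gamma<0$ ensures convergence at infinity, while $\gamma>2-n$ ensures convergence near the origin; the integrable singularity at $z=x/|x|$ comes for free from $2-n>-n$ (and from $2-n>-(n-1)$ for the boundary integral).

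The main technical point in this outline is the justification of the passage $R\to\infty$ in the representation formula: the class $C^2_\gamma$ provides only pointwise, not integral, decay of $u$ and its derivatives, so one must verify by direct inspection that the hemisphere contributions of both $G\,\partial_\nu u$ and $u\,\partial_\nu G$ are dominated by multiples of $R^\gamma$, as indicated above. Once this is done, the rest of the argument is purely the elementary weighted potential estimates sketched; the constant $C$ produced depends only on $n$ and $\gamma$.
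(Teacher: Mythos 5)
Your proposal is correct and follows essentially the same route as the paper: the same reflected kernel $|x-y|^{2-n}+|x-y^{\ast}|^{2-n}$, Green's identity on half-balls with the hemisphere terms shown to be $O(R^{\gamma})\to 0$ using $\gamma<0$, and then the weighted potential bounds $\int_{\Rn}|x-y|^{2-n}|y|^{\gamma-2}dy+\int_{\d\Rn}|x-y|^{2-n}|y|^{\gamma-1}dy'\leq C|x|^{\gamma}$, valid precisely for $2-n<\gamma<0$. The only difference is cosmetic: you sketch the scaling proof of these potential estimates, which the paper simply quotes.
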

\bp
We set $\phi(x,y)=|x-y|^{2-n}+|x-\widetilde{y}|^{2-n}$, where $\widetilde{y}=(y_1,...,y_{n-1}, -y_n)$ if $y=(y_1,...,y_n)$.
Observe that $\Delta_x\phi(x,y)= 0$ for any $x,y\in \Rn$, and $\frac{\d}{\d x_n}\phi(x,y)=0$ for any $x\in\d \Rn$ and $y\in \Rn$. Then, for any $y\in \Rn$ with $|y|<R$ , Green's formula yields
\begin{eqnarray*}
(n-2)\omega_{n-1}u(y)& = &
-\int_{x\in \Rn, |x|\leq R}\phi(x,y)\Delta u(x)dx\\
& & \quad -\int_{x\in \d\Rn, |x|\leq R}\phi(x,y)\frac{\d u}{\d x_n}(x)d\sigma_R(x)
\\
 & & \quad \quad +\int_{x\in \Rn, |x|= R}\phi(x,y)\frac{\d u}{\d r}(x)d\sigma_R(x)\\
 & & \quad\quad\quad  -\int_{x\in \Rn, |x|= R}\frac{\d \phi}{\d r}(x,y)u(x)d\sigma_R(x).
\end{eqnarray*}
Choosing $R\geq 2|y|$ and using the fact that $u\in C^{2}_{\gamma}(\Rn)$, one can check that
\[
\int_{x\in \Rn, |x|= R}\phi(x,y)\left|\frac{\d u}{\d r}(x)\right|d\sigma_R(x)
\leq C(n)R^{\gamma}.
\]
Also, since 
$$
\left|\frac{\d}{\d r}|x-y|^{2-n}\right|
\leq C(n)R^{1-n}\,,
$$
for any  $u\in \R^n$ with  $R\geq 2|y|$, we get 
$$
\int_{x\in \Rn, |x|= R}\left|\frac{\d\phi}{\d r}(x,y)u(x)\right|d\sigma_R(x)\leq C(n)R^{\gamma}.
$$
Hence, taking the limit as $R\to\infty$ in Green's formula above and using the hypothesis $\gamma<0$ we obtain
\begin{equation}\label{lemma1:2}
(n-2)\omega_{n-1}u(y)=-\int_{x\in \Rn}\phi(x,y)\Delta u(x)dx-\int_{x\in \d\Rn}\phi(x,y)\frac{\d u}{\d x_n}(x)d\sigma(x).
\end{equation}
Since $2-n<\gamma$, we can use the fact that 
$$
\int_{x\in \Rn}|x-y|^{2-n}|x|^{\gamma-2}dx+\int_{x\in\d \Rn}|x-y|^{2-n}|x|^{\gamma-1}d\sigma(x)\leq C(n)|y|^{\gamma}
$$
for any $y\in \R^n$, so that it follows from  (\ref{lemma1:2}) that
\begin{eqnarray*}
|y|^{-\gamma}|u(y)|
&\leq  & 
C\int_{x\in \Rn}|y|^{-\gamma}\phi(x,y)\Delta u(x)|dx\notag
\\
& & \quad +C\int_{x\in \d\Rn}|y|^{-\gamma}\phi(x,y)\left|\frac{\d u}{\d x_n}(x)\right|d\sigma(x),
\\
&\leq & C\int_{x\in \Rn}|y|^{-\gamma}\phi(x,y)|x|^{\gamma-2}\|\Delta u(x)\|_{C^{0}_{\gamma-2}(\Rn)}dx
\\
& & \quad + C\int_{x\in \d\Rn}|y|^{-\gamma}\phi(x,y)|x|^{\gamma-1}\left\|\frac{\d u}{\d x_n}(x)\right\|_{C^{0}_{\gamma-1}(\d\Rn)}d\sigma(x)
\\
&\leq & C\|\Delta u(x)\|_{C^{0}_{\gamma-2}(\Rn)}+C\left\|\frac{\d u}{\d x_n}(x)\right\|_{C^{0}_{\gamma-1}(\d\Rn)},
\end{eqnarray*}
which proves the lemma.
\ep

\begin{lemma}\label{lemma2}
Let $(M,g)$ be an asymptotically flat manifold with $g\in\Mc_\tau$, $\tau>0$, and boundary $\Sigma$, and  let $\gamma\in\R$. Then the following assertions hold:
\\(a) There exists $C=C(M, g, \gamma)>0$ such that, if $u\in C^{1,\a}_{\gamma}(M)$, $\Delta_g u\in C^{0,\a}_{\gamma-2}(M)$ and $\d u/\d \eta_g\in C^{1,\a}_{\gamma-1}(\Sigma)$, then $u\in C^{2,\a}_{\gamma}(M)$ and we have
$$
\|u\|_{C^{2,\a}_{\gamma}(M)}\leq C\|\Delta_g u\|_{C^{0,\a}_{\gamma-2}(M)}+C\left\|\frac{\d u}{\d \eta_g}\right\|_{C^{1,\a}_{\gamma-1}(\Sigma)}
+C\|u\|_{C^{0}_{\gamma}(M)}.
$$
\\(b) Assume that $g=\delta$ outside a compact set and $2-n<\gamma<0$.  Then there exists $C=C(M, g, \gamma)>0$ and a compact set $K\subset M$  such that, for all $u\in C^{2}_{\gamma}(M)$,
$$
\|u\|_{C^0_{\gamma}(M)}
\leq 
C\|\Delta_g u\|_{C^0_{\gamma-2}(M)}+C\left\|\frac{\d u}{\d\eta_g}\right\|_{C^0_{\gamma-1}(\Sigma)}
+C\|u\|_{C^1(K)}.
$$
In particular, if $g=\delta$ outside a compact set and $2-n<\gamma<0$, then there exists $C=C(M, g, \gamma)>0$ and a compact set $K\subset M$  such that,  if $u\in C^{0,\a}_{\gamma}(M)$, then $u\in C^{2,\a}_{\gamma}(M)$ and we have
$$
\|u\|_{C^{2,\a}_{\gamma}(M)}\leq C\|\Delta_g u\|_{C^{0,\a}_{\gamma-2}(M)}+C\left\|\frac{\d u}{\d \eta_g}\right\|_{C^{1,\a}_{\gamma-1}(\Sigma)}
+C\|u\|_{C^{1}(K)}.
$$
\end{lemma}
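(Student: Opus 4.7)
The two parts have distinct flavors: (a) is a weighted Schauder estimate obtained by a standard rescaling argument combined with classical Neumann boundary Schauder theory, while (b) is a quantitative $C^0_\gamma$ bound in the model asymptotic setting that leverages Lemma \ref{lemma1} via a cutoff construction. The final ``In particular\dots'' statement is obtained by combining (a) with (b) to absorb the $C^0_\gamma$ term into the right-hand side.

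For part (a), I would work with a dyadic decomposition of the end, writing $M = K_0 \cup \bigcup_{j \ge j_0} A_j$ where $A_j = \{x \in M : 2^{j-1} \le r(x) \le 2^{j+1}\}$ and $K_0$ is a compact set. On each annulus $A_j$, the map $x \mapsto 2^{-j}x$ in the asymptotic chart pulls $g$ back to a metric $\tilde g_j$ on a fixed annular domain of $\mathbb R^n_+$, and the asymptotic flatness hypothesis $g \in \Mc_\tau$ guarantees that $\tilde g_j \to \delta$ in $C^{1,\alpha}$ as $j \to \infty$. One then applies classical interior Schauder estimates away from $\Sigma$ and the standard boundary Schauder estimate for the oblique-derivative problem near $\Sigma$ to the rescaled function $\tilde u(y) = u(2^j y)$, obtaining
$$\|\tilde u\|_{C^{2,\alpha}} \le C\left(\|\Delta_{\tilde g_j}\tilde u\|_{C^{0,\alpha}} + \left\|\frac{\partial \tilde u}{\partial \eta_{\tilde g_j}}\right\|_{C^{1,\alpha}} + \|\tilde u\|_{C^0}\right)$$
with $C$ uniform in $j$. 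Unwinding the rescaling produces exactly the factors $2^{j(\gamma - k)}$ required to recover the weighted norms of $u$ on $A_j$, so taking supremum over $j$ (and handling $K_0$ by ordinary Schauder) gives (a).

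For part (b), choose a smooth cutoff $\chi \in C^\infty(M)$ with $\chi \equiv 0$ on a compact set $K \supset \{g \ne \delta\}$ and $\chi \equiv 1$ outside a slightly larger compact set. Under the asymptotic chart, $\chi u$ extends by zero to a function in $C^2_\gamma(\mathbb R^n_+)$, so Lemma \ref{lemma1} applies:
$$\|\chi u\|_{C^0_\gamma(\mathbb R^n_+)} \le C\|\Delta(\chi u)\|_{C^0_{\gamma-2}(\mathbb R^n_+)} + C\left\|\frac{\partial(\chi u)}{\partial x_n}\right\|_{C^0_{\gamma-1}(\partial\mathbb R^n_+)}.$$
Expanding via Leibniz, $\Delta(\chi u) = \chi \Delta u + 2\nabla\chi \cdot \nabla u + u\,\Delta\chi$. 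On $\operatorname{supp}(\chi)$ we have $g = \delta$, hence $\chi \Delta u = \chi \Delta_g u$ is controlled by $\|\Delta_g u\|_{C^0_{\gamma-2}(M)}$, while the commutator terms $2\nabla\chi \cdot \nabla u + u\,\Delta\chi$ are supported on a compact set (in $K$) and are therefore bounded by $C\|u\|_{C^1(K)}$. An identical splitting for $\partial(\chi u)/\partial x_n$, together with the fact that $\eta_g = -\partial_n$ in the model region, controls the boundary term by $\|\partial u/\partial \eta_g\|_{C^0_{\gamma-1}(\Sigma)} + C\|u\|_{C^1(K)}$. Finally, $\|(1-\chi)u\|_{C^0_\gamma(M)} \le C\|u\|_{C^0(K)}$ trivially since $1-\chi$ has compact support, and adding gives the displayed estimate.

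The concluding assertion of part (b) follows by plugging this $C^0_\gamma$ bound into the right-hand side of the estimate from (a), after possibly enlarging $K$. The main technical subtlety, in my view, is ensuring that the constant in the boundary Schauder estimate on the rescaled annuli in part (a) is uniform in $j$; this is guaranteed by the convergence $\tilde g_j \to \delta$ in $C^{1,\alpha}$, but requires one to phrase the boundary Schauder theory so that the constant depends only on an upper bound for the $C^{1,\alpha}$ norm of the coefficients and a lower ellipticity bound. The cutoff manipulations in part (b) are entirely routine once one observes that $\chi u$ genuinely lies in $C^2_\gamma(\mathbb R^n_+)$.
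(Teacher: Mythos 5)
Your proposal is correct and follows essentially the same route as the paper: part (a) by rescaling dyadic/annular pieces of the end to a fixed half-space annulus and applying local interior and Neumann-boundary Schauder estimates with constants uniform in the scale (uniformity coming from $g\in\Mc_\tau$, so the rescaled metrics converge to $\delta$ in $C^{1,\a}$), and part (b) by cutting off on the region where $g=\delta$, extending by zero to $\mathbb{R}^n_+$ so that Lemma \ref{lemma1} applies, and absorbing the compactly supported commutator terms into $\|u\|_{C^1(K)}$, then combining (a) and (b) for the final assertion. The only cosmetic difference is that the paper keeps a $C^{1,\a}$ lower-order term from the cutoff and removes it via an interpolation inequality, whereas you invoke the local Schauder estimate with a zeroth-order term directly; both are standard and equivalent.
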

\bp
We can identify the end  $M_{\infty}$ with $\Rn\backslash\{x\in\Rn, \:|x|>1\}$ under the given asymptotically flat chart.  For $R\geq 1$ we will denote by $K_R$ the compact set $M\backslash \{x\in M_{\infty};\:|x|>R\}$. 
Finally, for any subset $\Omega\subset M$, we define $\d'\Omega=\Omega\cap \Sigma$.


For the proof of item (a), let us set $A=\{x\in M_{\infty};\:1<|x|< 4\}$ and $\widetilde{A}=\{x\in M_{\infty};\:2<|x|< 3\}$. 
For $R\geq 1$ we also set $A_R=\{x\in M_{\infty};\:R<|x|<4R\}$ and $\widetilde{A}=\{x\in M_{\infty};\:2R<|x|< 3R\}$ so that $A_1=A$ and $\widetilde{A}_1=\widetilde{A}$. 
Let $0\leq \chi\leq 1$ be a smooth cutoff function satisfying $\chi\equiv 1$ in $\widetilde{A}$ and $\chi\equiv 0$ in $M\backslash A$, and let $u\in C^{1,\a}_{\gamma}(M)$.  We set
$u_R(x)=u(Rx)$ for $x\in A$ and define a metric $g_R$ on $A$ by $(g_R)_{ij}(x)=g_{ij}(Rx)$.

It follows from elliptic regularity that $u\in C^{2,\a}_{loc}(M)$ and 
\begin{equation}\label{lemma2:1}
\|\chi u_R\|_{C^{2,\a}(A)}\leq C\|\Delta_{g_R} (\chi u_R)\|_{C^{0,\a}(A)}+C\left\|\frac{\d}{\d \eta_{g_R}}(\chi u_R)\right\|_{C^{1,\a}(\partial'A)},
\end{equation}
for some $C=C(M,g)$.
Observe that  
\begin{equation}\label{lemma2:2}
 \|u_R\|_{C^{2,\a}(\widetilde{A})}\leq \|\chi u_R\|_{C^{2,\a}(A)}\,,
\end{equation}
\begin{equation}\label{lemma2:3}
\|\Delta_{g_R}(\chi u_R)\|_{C^{0,\a}(A)}\leq 
C\|\Delta_{g_R} u_R\|_{C^{0,\a}(A)}+C\|u_R\|_{C^{1,\a}(A)}
\end{equation}
and 
\begin{equation}\label{lemma2:4}
\left\|\frac{\d}{\d\eta_{g_R}}(\chi u_R)\right\|_{C^{1,\a}(\d 'A)}\leq 
C\left\|\frac{\d u_R}{\d\eta_{g_R}} \right\|_{C^{1,\a}(\d 'A)}+C\|u_R\|_{C^{1,\a}(\d 'A)}\,,
\end{equation}
so that 
\begin{equation}\label{lemma2:4'}
\|u_R\|_{C^{2,\a}(\widetilde{A})}
\leq
C\|\Delta_{g_R} u_R\|_{C^{0,\a}(A)}+C\left\|\frac{\d u_R}{\d\eta_{g_R}} \right\|_{C^{1,\a}(\d 'A)}+C\|u_R\|_{C^{1,\a}(A)}.
\end{equation}
Expanding this in terms of  $C^0$ norms,    
multiplying by $R^{-\gamma}$ and rewriting the result in terms of $u$ and $g$, we get 
\begin{equation}\label{lemma2:4''}
\|u\|_{C^{2,\a}_{\gamma}(\widetilde{A}_R)}
\leq  C\|\Delta_g u\|_{C^{0,\a}_{\gamma-2}(A_R)}+C\left\|\frac{\d u}{\d\eta_g}\right\|_{C^{1,\a}_{\gamma-1}(\d 'A_R)}
+C\|u\|_{C^{1,\a}_{\gamma}(A_R)}.
\end{equation}
Since this holds for R arbitrarily large, we conclude that $u\in C^{2,\a}_{\gamma}(M)$. 

Combining (\ref{lemma2:4'}) with a well-known interpolation inequality, namely,
\[
\|v\|_{C^{1,\a}(A)}\leq \e\|v\|_{C^{2,\a}(A)}+C(\e)\|v\|_{C^{0}(A)},
\]
and procceding as in (\ref{lemma2:4''}) we obtain
\begin{eqnarray*}
\|u\|_{C^{2,\a}_{\gamma}(\widetilde{A}_R)}
&\leq & C\|\Delta_g u\|_{C^{0,\a}_{\gamma-2}(A_R)}+C\left\|\frac{\d u}{\d\eta_g}\right\|_{C^{1,\a}_{\gamma-1}(\d 'A_R)}
\\
& & \quad +C(\e)\|u\|_{C^{0}_{\gamma}(A_R)}+\e\|u\|_{C^{2,\a}_{\gamma}(A_R)}.
\end{eqnarray*}
Hence,
\begin{eqnarray*}
\|u\|_{C^{2,\a}_{\gamma}(M\backslash K_{2})}
&\leq & C\|\Delta_g u\|_{C^{0,\a}_{\gamma-2}(M\backslash K_{1})}+C\left\|\frac{\d u}{\d\eta_g}\right\|_{C^{1,\a}_{\gamma-1}(\Sigma\backslash \d 'K_{1})}
\\
& &\quad +C(\e)\|u\|_{C^{0}_{\gamma}(M\backslash K_{1})}+\e\|u\|_{C^{2,\a}_{\gamma}(M\backslash K_{1})},
\end{eqnarray*}
which implies
\begin{eqnarray}\label{lemma2:5}
\|u\|_{C^{2,\a}_{\gamma}(M\backslash K_{2})}
&\leq & C\|\Delta_g u\|_{C^{0,\a}_{\gamma-2}(M)}+C\left\|\frac{\d u}{\d\eta_g}\right\|_{C^{1,\a}_{\gamma-1}(\Sigma)}
\\
& &\quad +C(\e)\|u\|_{C^{0}_{\gamma}(M)}+\e\|u\|_{C^{2,\a}_{\gamma}(M)}\,.\notag
\end{eqnarray}

Let us now consider a smooth cutoff function $0\leq\theta\leq1$ satisfying $\theta\equiv 1$ in $K_3$ and $\theta\equiv 0$ in $M\backslash K_4$. By elliptic regularity, 
\begin{eqnarray*}
\|u\|_{C^{2,\a}(K_3)}
&\leq  & \|\theta u\|_{C^{2,\a}(K_4)}
\leq C\|\Delta_g(\theta u)\|_{C^{0,\a}(K_4)}+ C\left\|\frac{\d}{\d\eta_g}(\theta u)\right\|_{C^{1,\a}(\d 'K_4)}
\\
&\leq &  C\|\Delta_g  u\|_{C^{0,\a}(K_4)}+ C\left\|\frac{\d u}{\d\eta_g}\right\|_{C^{1,\a}(\d 'K_4)}+C\|u\|_{C^{1,\a}(K_4)}
\\
&\leq & C\|\Delta_g  u\|_{C^{0,\a}(K_4)}+ C\left\|\frac{\d u}{\d\eta_g}\right\|_{C^{1,\a}(\d 'K_4)}+C(\e)\|u\|_{C^{0}(K_4)}\\
& & \quad +\e\|u\|_{C^{2,\a}(K_4)},
\end{eqnarray*}
so that 
\begin{eqnarray}\label{lemma2:6}
\|u\|_{C^{2,\a}(K_3)}
&\leq &  C\|\Delta_g  u\|_{C^{0,\a}_{\gamma-2}(M)}+ C\left\|\frac{\d u}{\d\eta_g}\right\|_{C^{1,\a}_{\gamma-1}(\Sigma)}\\
&& \quad +C(\e)\|u\|_{C^{0}_{\gamma}(M)}
 +\e\|u\|_{C^{2,\a}_{\gamma}(M).}\notag
\end{eqnarray}
The estimate in item (a) follows immediately from
(\ref{lemma2:5}) and (\ref{lemma2:6}).

In order to prove item (b), assume that $g=\delta$ in $M\backslash K_{R}$, $R\geq 1$, and consider a smooth cutoff function $0\leq\theta\leq 1$ satisfying $\theta\equiv 1$ in $K_R$ and $\theta\equiv 0$ in $M\backslash K_{2R}$. Since $(1-\theta)u$ has  support in $M_{\infty}$, the restriction of $(1-\theta)u$ to $M_{\infty}$ can be seen as a function $v\in C^{2}_{\gamma}(\Rn)$. Hence, according to Lemma \ref{lemma1}, there exists $C=C(n)>0$ such that
\[
\|v\|_{C^0_{\gamma}(\Rn)}\leq C\|\Delta v\|_{C^0_{\gamma-2}(\Rn)}+C\left\|\frac{\d v}{\d x_n}\right\|_{C^0_{\gamma-1}(\d\Rn)}.
\]
Since
$
\|u\|_{C^0_{\gamma}(M\backslash K_{2R})}\leq \|v\|_{C^0_{\gamma}(\Rn)}
$
we obtain
\begin{eqnarray*}
\|u\|_{C^0_{\gamma}(M\backslash K_{2R})}
& \leq &  
C\|\Delta_g u\|_{C^0_{\gamma-2}(M\backslash K_{R})}+C\left\|\frac{\d u}{\d\eta_g}\right\|_{C^0_{\gamma-1}(\Sigma\backslash \d 'K_{R})}\\
& & \quad 
+C\|u\|_{C^1(K_{2R}\backslash K_{R})},
\end{eqnarray*}
which clearly implies
the estimate in item (b).
\ep

\begin{lemma}\label{lemma3}
Let $(M,g)$ be as in Lemma \ref{lemma2} and consider the operators $L=\Delta_g+h$ and $B=\frac{\d}{\d\eta_g}+\bar{h}$ where $h\in C^{0,\a}_{-2-\e}(M)$ and $\bar{h}\in C^{1,\a}_{-1-\e}(\Sigma)$. If $2-n<\gamma<0$, we define by $T(u)=(Lu,Bu)$ the operator
$$
T:C^{2,\a}_{\gamma}(M)\to C^{0,\a}_{\gamma-2}(M)\times C^{1,\a}_{\gamma-1}(\Sigma)\,.
$$
If $T$ is injective then there holds 
\begin{equation}\label{lemma3:0}
\|u\|_{C^{2,\a}_{\gamma}(M)}
\leq C\|Lu\|_{C^{0,\a}_{\gamma-2}(M)} +C\|Bu\|_{C^{1,\a}_{\gamma-1}(\Sigma)},
\end{equation}
for all $u\in C^{2,\a}_{\gamma}(M)$ and some $C=C(M,g, \gamma, \|h\|_{C^{0,\a}_{-2-\e}(M)},\|\bar{h}\|_{ C^{1,\a}_{-1-\e}(\Sigma)})>0$.
\end{lemma}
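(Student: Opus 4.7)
\emph{Overall strategy.} My plan has two stages: first derive a preliminary inequality that still carries an extra $\|u\|_{C^0_\gamma(M)}$ term on the right-hand side, then remove this term by a contradiction/compactness argument that exploits the hypothesized injectivity of $T$. The second stage splits into a local case handled by Arzel\`a--Ascoli and a blow-down-at-infinity case handled by a Liouville-type theorem, which is where the weight range $2-n<\gamma<0$ is used in an essential way.

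\emph{Preliminary estimate.} Writing $\Delta_g u=Lu-hu$ and $\d u/\d\eta_g=Bu-\bar h u$ and applying Lemma \ref{lemma2}(a) gives, for any $u\in C^{2,\a}_\gamma(M)$,
\[
\|u\|_{C^{2,\a}_\gamma(M)}\le C\|Lu\|_{C^{0,\a}_{\gamma-2}(M)}+C\|Bu\|_{C^{1,\a}_{\gamma-1}(\Sigma)}+C\|hu\|_{C^{0,\a}_{\gamma-2}(M)}+C\|\bar h u\|_{C^{1,\a}_{\gamma-1}(\Sigma)}+C\|u\|_{C^0_\gamma(M)}.
\]
The weighted H\"older product estimates $\|hu\|_{C^{0,\a}_{\gamma-2-\e}(M)}\le C\|h\|_{C^{0,\a}_{-2-\e}(M)}\|u\|_{C^{0,\a}_\gamma(M)}$ and $\|\bar h u\|_{C^{1,\a}_{\gamma-1-\e}(\Sigma)}\le C\|\bar h\|_{C^{1,\a}_{-1-\e}(\Sigma)}\|u\|_{C^{1,\a}_\gamma(M)}$, combined with the continuous embedding $C^{k,\a}_{\delta-\e}\hookrightarrow C^{k,\a}_\delta$, bound the two middle terms by a constant (depending on $\|h\|,\|\bar h\|$) times $\|u\|_{C^{1,\a}_\gamma(M)}$. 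A standard interpolation $\|u\|_{C^{1,\a}_\gamma}\le\theta\|u\|_{C^{2,\a}_\gamma}+C(\theta)\|u\|_{C^0_\gamma}$ with $\theta$ small and absorbing then produces
\begin{equation}\label{planstar}
\|u\|_{C^{2,\a}_\gamma(M)}\le C\|Lu\|_{C^{0,\a}_{\gamma-2}(M)}+C\|Bu\|_{C^{1,\a}_{\gamma-1}(\Sigma)}+C\|u\|_{C^0_\gamma(M)}.
\end{equation}

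\emph{Contradiction setup and local case.} Suppose (\ref{lemma3:0}) fails. Then there exist $u_n\in C^{2,\a}_\gamma(M)$ with $\|u_n\|_{C^{2,\a}_\gamma}=1$ and $\|Lu_n\|_{C^{0,\a}_{\gamma-2}}+\|Bu_n\|_{C^{1,\a}_{\gamma-1}}\to 0$. By (\ref{planstar}), $\|u_n\|_{C^0_\gamma}\ge c>0$ eventually, so we may pick $x_n\in M$ with $r(x_n)^{-\gamma}|u_n(x_n)|\ge c/2$. If $r(x_n)$ stays bounded along a subsequence, then $x_n$ lies in a compact set and (passing to a further subsequence) $x_n\to x_0$. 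The uniform $C^{2,\a}_\gamma$ bound becomes an ordinary $C^{2,\a}$ bound on compacts, so Arzel\`a--Ascoli plus a diagonal argument produces $u_n\to u$ in $C^2_{\mathrm{loc}}(M)$ with $u\in C^{2,\a}_\gamma(M)$, $Lu=0$, $Bu=0$, and $|u(x_0)|\ge (c/2)r(x_0)^\gamma>0$, contradicting the injectivity of $T$.

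\emph{Blow-down case and main obstacle.} If instead $R_n:=r(x_n)\to\infty$, rescale in the asymptotic chart: put $v_n(y):=R_n^{-\gamma}u_n(R_n y)$ for $y\in\mathbb R^n_+$. The bound $\|u_n\|_{C^{2,\a}_\gamma}=1$ translates into uniform $C^{2,\a}$ bounds for $v_n$ on compact subsets of $\mathbb R^n_+\setminus\{0\}$ together with $|v_n(y)|\le|y|^\gamma$. A direct calculation shows the rescaled metric $\tilde g_n(y):=g(R_n y)$ converges to $\delta$ in $C^{1,\a}_{\mathrm{loc}}$, while $\Delta_{\tilde g_n}v_n$ and $\d v_n/\d y_n$ tend to $0$ uniformly on such compacts: each contribution (from $Lu_n$, $Bu_n$, the decaying zero-order coefficients $h$, $\bar h$, and the difference $\Delta-\Delta_g$) picks up an extra positive power $R_n^{-\e}$ or $R_n^{-\tau}$ after renormalization. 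Elliptic compactness yields a subsequential limit $v\in C^{2,\a}_{\mathrm{loc}}(\mathbb R^n_+\setminus\{0\})$ with $\Delta v=0$, $\d v/\d y_n=0$ on $\d\mathbb R^n_+$, $|v(y)|\le|y|^\gamma$, and $|v(y_0)|\ge c/2$ at some $y_0$ with $|y_0|=1$ (the limit of $x_n/R_n$). The crux is showing $v\equiv 0$: reflecting across $\{y_n=0\}$ extends $v$ to a harmonic function on $\mathbb R^n\setminus\{0\}$ with $|v(y)|\le C|y|^\gamma$. Since $\gamma>2-n$, $v$ grows strictly slower than the fundamental solution near the origin, so B\^ocher's removable singularity theorem extends $v$ harmonically to all of $\mathbb R^n$; since $\gamma<0$, $v$ decays at infinity, and Liouville's theorem then forces $v\equiv 0$, contradicting $|v(y_0)|\ge c/2$. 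The two-sided weight condition $2-n<\gamma<0$ is used precisely here, to guarantee simultaneously removability at the origin and decay at infinity.
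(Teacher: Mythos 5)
Your proposal is correct, but it reaches the estimate by a genuinely different route than the paper. The paper first replaces $g$ by the interpolated metric $g_R=\theta_R\delta+(1-\theta_R)g$, which is exactly Euclidean outside a compact set, and invokes Lemma \ref{lemma2}(b) (whose proof rests on the explicit half-space Green's representation of Lemma \ref{lemma1}, where $2-n<\gamma<0$ enters); after showing $L-\Delta_{g_R}$ and $B-\partial/\partial\eta_{g_R}$ are small for $R$ large, it obtains the key inequality with a remainder $\|u\|_{C^{1,\a}(K_{R'})}$ over a \emph{compact} set. Its contradiction argument is then soft: a subsequence of the normalized sequence converges in $C^{1,\a}(K_{R'})$, so the estimate applied to differences makes it Cauchy in $C^{2,\a}_\gamma(M)$, and the full-norm limit has norm one and lies in $\ker T$. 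You instead use only Lemma \ref{lemma2}(a), whose remainder is the \emph{global} weighted norm $\|u\|_{C^0_\gamma(M)}$, and you remove it by a concentration dichotomy: if the weighted sup is attained on a bounded region, Arzel\`a--Ascoli gives a nontrivial element of $\ker T$; if it escapes to infinity, your blow-down produces a harmonic function on $\mathbb R^n_+\setminus\{0\}$ with Neumann condition and bound $|y|^\gamma$, killed by even reflection, B\^ocher removability ($\gamma>2-n$) and Liouville ($\gamma<0$). In effect your rescaling-plus-Liouville step is the substitute for the paper's Lemma \ref{lemma1}; both use the weight window $2-n<\gamma<0$ at exactly that point. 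What the paper's route buys is a quantitative intermediate estimate (its (\ref{lemma3:9})) with constants controlled by the coefficient norms and a very short compactness step; what yours buys is independence from the exactly-flat comparison metric and from the Green's-function lemma, at the cost of the reflection/removable-singularity analysis and of quietly invoking weighted multiplication and interpolation inequalities (standard, and consistent with what the paper uses elsewhere). One shared caveat: in both arguments the final constant produced by the contradiction step depends on the operator $T$ itself through its injectivity, not merely on $\|h\|$ and $\|\bar h\|$ as the statement's notation suggests, so your proof is no weaker than the paper's on that score.
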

\bp
We retain the notation in the proof of Lemma \ref{lemma2}.
We consider a smooth cutoff function $0\leq\theta\leq 1$ satisfying $\theta\equiv 1$ in $M\backslash K_{2}$  and $\theta\equiv 0$ in $K_1$. Since the support of $\theta$ is contained in $M\backslash K_1$, it makes sense to define $\theta_R$, $R\geq 1$,  by $\theta_R(x)=\theta(R^{-1}x)$, which is supported in $M\backslash K_{R}\subset M_{\infty}$.
Also, 
let us define the metric $g_R$ on $M$ by $g_R=\theta_R\delta +(1-\theta_R)g$, so that $g_R=\delta$ in $M\backslash K_{2R}$. For later use we observe that 
\begin{equation}\label{lemma3:1}
g-g_R=\theta_R(g-\delta).
\end{equation}
By the last assertion in  Lemma \ref{lemma2} there exists $C>0$ such that
\begin{align}\label{lemma3:2}
\|u\|_{C^{2,\a}_{\gamma}(M)}
\leq
C\|\Delta_{g_R}u\|_{C^{0,\a}_{\gamma-2}(M)}+
C\left\|\frac{\partial u}{\d\eta_{g_R}}\right\|_{C^{1,\a}_{\gamma-1}(M)}
+C\|u\|_{C^{1}(K_{R'})}
\end{align}
for some large $R'\geq 2R$.

Let us first estimate $\|(L-\Delta_{g_R})u\|_{C^{0,\a}_{\gamma-2}(M)}$.
Using the standard coordinate expression for the Laplacian we can verify that
\begin{align}
\|(\Delta_{g_R}-\Delta_g)u\|_{C^{0,\a}_{\gamma-2}(M)}
&\leq 
C\|g_R-g\|_{C^{0,\a}_0(M)}\|\nabla_g^2 u\|_{C^{0,\a}_{\gamma-2}(M)}\notag
\\
&\hspace{1cm}+C\|g_R-g\|_{C^{1,\a}_0(M)}\|\nabla_g u\|_{C^{0,\a}_{\gamma-1}(M)}\notag
\\
&\leq C\|g_R-g\|_{C^{1,\a}_0(M)}\|u\|_{C^{2,\a}_{\gamma}(M)}\,.\notag
\end{align} 
Also, using (\ref{lemma3:1}) we have
\begin{eqnarray*}
\|g_R-g\|_{C^{1,\a}_0(M)}
&\leq & 
C\|\theta_R(g-\delta)\|_{C^{1,\a}_0(M)}\notag
\\
&\leq &
C\|g-\delta\|_{C^{1,\a}_0(M\backslash K_R)}\\
& \leq  & CR^{-\tau}\|g-\delta\|_{C^{1,\a}_{-\tau}(M\backslash K_R)}\,,\notag
\end{eqnarray*} 
where the constant $C$ is independent of $R$, which implies
\[
\|(\Delta_{g_R}-\Delta_g)u\|_{C^{0,\a}_{\gamma-2}(M)}
\leq CR^{-\tau}\|u\|_{C^{2,\a}_{\gamma}(M)}.
\]
On the other hand, writing $u=(1-\theta_R) u+\theta_R u$ we have
\begin{eqnarray*}
\|hu\|_{C^{0,\a}_{\gamma-2}(M)}
&\leq & 
\|h(1-\theta_R)u\|_{C^{0,\a}_{\gamma-2}(M)}+\|h\theta_R u\|_{C^{0,\a}_{\gamma-2}(M\backslash K_R)}
\\
&\leq & 
\|h\|_{C^{0,\a}_{-2}(M)}\|(1-\theta_R)u\|_{C^{0,\a}_{\gamma}(M)}+\|h\|_{C^{0,\a}_{-2}(M\backslash K_R)}\|\theta_R u\|_{C^{0,\a}_{\gamma}(M\backslash K_R)}\notag
\\
&\leq &
C\|h\|_{C^{0,\a}_{-2-\e}(M)}\|u\|_{C^{0,\a}(K_{2R})}+R^{-\e}\|h\|_{C^{0,\a}_{-2-\e}(M\backslash K_R)}\|u\|_{C^{0,\a}_{\gamma}(M)}\notag
\\
&\leq & 
C(\|u\|_{C^{0,\a}(K_{2R})}+R^{-\e}\|u\|_{C^{0,\a}_{\gamma}(M)})\,,
\end{eqnarray*}
where the last constant $C$ depends on $\|h\|_{C^{0,\a}_{-2-\e}(M)}$. Thus, 
\begin{align}\label{lemma3:5}
\|(L-\Delta_{g_R})u\|_{C^{0,\a}_{\gamma-2}(M)}
&\leq
C\|u\|_{C^{0,\a}(K_{2R})}+C(R^{-\tau}+R^{-\e})\|u\|_{C^{2,\a}_{\gamma}(M)}\,.
\end{align}

Similarly, if we make use of (\ref{unitnormalexp})
we obtain
\begin{eqnarray}\label{lemma3:8}
\big\|\big(B-\frac{\d}{\d\eta_{g_R}}\big)u\big\|_{C^{1,\a}_{\gamma-1}(\Sigma)}
& \leq & 
C\|u\|_{C^{1,\a}(\d 'K_{2R})} +\\
& & \quad +C(R^{-\tau}+R^{-\e})\|u\|_{C^{2,\a}_{\gamma}(\Sigma)},\notag
\end{eqnarray}
where $C$ depends on $\|\bar{h}\|_{C^{1,\a}_{-1-\e}(\Sigma)}$, so that (\ref{lemma3:2}), (\ref{lemma3:5}) and (\ref{lemma3:8}) lead to 
\begin{eqnarray*}
\|u\|_{C^{2,\a}_{\gamma}(M)}
&\leq &
C\|Lu\|_{C^{0,\a}_{\gamma-2}(M)}+
C\|Lu-\Delta_{g_R}u\|_{C^{0,\a}_{\gamma-2}(M)}\\
& &  \quad +C\|Bu\|_{C^{1,\a}_{\gamma-1}(\Sigma)}+C\big\|Bu-\frac{\d}{\d\eta_{g_R}}u\big\|_{C^{1,\a}_{\gamma-1}(\Sigma)}
\\
& & \quad \quad +C\|u\|_{C^{1}(K_{R'})}
\\
&\leq  &
C\|Lu\|_{C^{0,\a}_{\gamma-2}(M)}+C(R^{-\tau}
+R^{-\e})\|u\|_{C^{2,\a}_{\gamma}(M)}
\\
& &\quad +C\|Bu\|_{C^{1,\a}_{\gamma-1}(\Sigma)}+C\|u\|_{C^{1,\a}(K_{R'})}.
\end{eqnarray*}
Hence, if we choose  $R$  large we finally obtain the key estimate
\begin{align}\label{lemma3:9}
\|u\|_{C^{2,\a}_{\gamma}(M)}
&\leq
C\|Lu\|_{C^{0,\a}_{\gamma-2}(M)}+C\|Bu\|_{C^{1,\a}_{\gamma-1}(\Sigma)}+C\|u\|_{C^{1,\a}(K_{R'})}\,.
\end{align}

The rest of the proof of Lemma \ref{lemma3} will follow by a contradiction argument using the injectivity assumption and (\ref{lemma3:9}).
Indeed, assuming that (\ref{lemma3:0}) does not hold we can choose $\{u_j\}_{j=1}^{\infty}\subset C^{2,\a}_{\gamma}(M)$ satisfying
$$
1=\|u_j\|_{ C^{2,\a}_{\gamma}(M)}\geq j\|Lu_j\|_{ C^{0,\a}_{\gamma-2}(M)}+ j\|Bu_j\|_{ C^{1,\a}_{\gamma-1}(\Sigma)}\,.
$$
In particular, as $j\to\infty$,
\begin{equation}\label{lemma3:10}
\begin{cases}
Lu_j\to 0\:\:\text{in}\:C^{0,\a}_{\gamma-2}(M)\,,
\\
Bu_j\to 0\:\:\text{in}\:C^{1,\a}_{\gamma-1}(\Sigma)\,.
\end{cases}
\end{equation}
Since $\|u_j\|_{ C^{2,\a}_{\gamma}(M)}=1$ we can assume that $\{u_j\}$ converges in $C^{1,\a}(K_{R'})$. Then, using  (\ref{lemma3:9}) with $u=u_j-u_k$, we see that $\{u_j\}$ is a Cauchy sequence in $ C^{2,\a}_{\gamma}(M)$. Hence, this sequence converges in $ C^{2,\a}_{\gamma}(M)$ to some $u\in  C^{2,\a}_{\gamma}(M)$ with $\|u\|_{ C^{2,\a}_{\gamma}(M)}=1$. The fact that $T=(L,B)$ is a continuous operator together with (\ref{lemma3:10}) implies that $Lu=0$ and $Bu=0$. Thus, $u\equiv 0$ by the injectivity hypothesis. This contradicts the fact that $\|u\|_{ C^{2,\a}_{\gamma}(M)}=1$ and concludes the proof of Lemma \ref{lemma3}.
\ep

\begin{lemma}\label{lemma4} Let $(M,g)$ be an asymptotically flat manifold as in Lemma \ref{lemma3}. If $2-n<\gamma<0$ consider the operator
$$
T:C^{2,\a}_{-\gamma}(M)\to C^{0,\a}_{\gamma-2}(M)\times C^{1,\a}_{\gamma-1}(\Sigma)\,.
$$
given by $T(u)=(\Delta_g u, \frac{\d u}{\d\eta_g})$.
If $g=\delta$ outside a compact subset of $M$ then $T$ is an isomorphism.
\end{lemma}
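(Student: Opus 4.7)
The strategy is to prove injectivity and surjectivity of $T$ separately by transplanting the problem to the double $(\widetilde M,\widetilde g)$ of $(M,g)$ along $\Sigma$, as was already done in the proofs of Propositions \ref{Sobolev:N} and \ref{Sobolev:D}. Once injectivity is in hand, Lemma \ref{lemma3} (applied with $h\equiv 0$ and $\bar h\equiv 0$) immediately supplies the coercive estimate $\|u\|_{C^{2,\alpha}_\gamma(M)}\leq C\bigl(\|\Delta_g u\|_{C^{0,\alpha}_{\gamma-2}(M)}+\|\partial u/\partial\eta_g\|_{C^{1,\alpha}_{\gamma-1}(\Sigma)}\bigr)$, so bounded invertibility of $T$ will follow from surjectivity by the open mapping theorem.

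\textbf{Injectivity.} Suppose $u\in C^{2,\alpha}_\gamma(M)$ satisfies $\Delta_g u=0$ in $M$ and $\partial u/\partial\eta_g=0$ on $\Sigma$. I extend $u$ by even reflection to $\widetilde u$ on $\widetilde M$. The Neumann condition ensures that $\widetilde u$ is a weak solution of $\Delta_{\widetilde g}\widetilde u=0$ on the whole of $\widetilde M$, and it is $C^{2,\alpha}_{\mathrm{loc}}$ away from $\Sigma$. Since $g=\delta$ outside a compact set, $\widetilde g$ is literally flat at infinity on each end of $\widetilde M$, and $\widetilde u\to 0$ at infinity because $\gamma<0$. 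The injectivity portion of Proposition \ref{Sobolev:N} applied on $\widetilde M$ (or equivalently the asymptotically flat maximum principle) then forces $\widetilde u\equiv 0$, hence $u\equiv 0$.

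\textbf{Surjectivity.} Given $(f_1,f_2)$ in the target, I first reduce to the case $f_2\equiv 0$: using a tubular neighborhood of $\Sigma$ and a smooth cutoff, I construct $v\in C^{2,\alpha}_\gamma(M)$ with $\partial v/\partial\eta_g=f_2$ on $\Sigma$ (the decay $f_2=O(r^{\gamma-1})$ ensures $v=O(r^\gamma)$). Replacing $f_1$ with $f_1-\Delta_g v$, I must find $w\in C^{2,\alpha}_\gamma(M)$ with $\Delta_g w=\tilde f_1$ and $\partial w/\partial\eta_g=0$. Extend $\tilde f_1$ evenly to $F$ on $\widetilde M$; fix $q>n$ and $\beta$ with $\gamma<\beta<0$, $2-n<\beta$, $\beta\notin\mathbb Z$. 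By Proposition \ref{Sobolev:lemma} we have $F\in L^q_{0,\beta-2}(\widetilde M)$, so Proposition \ref{Sobolev:N} (valid on $\widetilde M$ with $\widetilde g$ as in \cite[Definition 2.1]{Ba}) provides $\widetilde w\in L^q_{2,\beta}(\widetilde M)$ with $\Delta_{\widetilde g}\widetilde w=F$. Uniqueness in Proposition \ref{Sobolev:N} together with the $\mathbb Z_2$-symmetry of $F$ forces $\widetilde w$ to be invariant under reflection, so its restriction $w$ to $M$ satisfies the Neumann condition. Proposition \ref{Sobolev:lemma} gives $w\in C^{1,\alpha}_\beta(M)$, and Lemma \ref{lemma2}(a) then upgrades this to $w\in C^{2,\alpha}_\gamma(M)$ with the sharp weight.

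\textbf{Main obstacle.} The delicate point is the weighted bootstrap: Sobolev existence on the double yields solutions with weight $\beta$ strictly worse than $\gamma$, and recovering the sharp decay rate $\gamma$ relies on using Lemma \ref{lemma2}(a) in combination with the compact support of $H_g$ and the fact that $g=\delta$ outside a compact set, which together give $\Delta_g w=\tilde f_1\in C^{0,\alpha}_{\gamma-2}(M)$ and let the elliptic estimate see the correct weight. A secondary technical issue is that $\widetilde g$ is only Lipschitz across $\Sigma$, but this is precisely the framework covered by \cite[Proposition 2.2]{Ba}, so no extra argument is needed.
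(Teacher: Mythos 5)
Your injectivity argument is fine (the maximum principle, or the reflection to the double, both work), and once injectivity is known Lemma \ref{lemma3} indeed gives the a priori estimate. The genuine gap is in the surjectivity step, precisely at the point you yourself flag as the ``main obstacle'': Lemma \ref{lemma2}(a) cannot recover the sharp weight. That lemma is a pure regularity statement at a \emph{fixed} weight: its hypothesis already requires $u\in C^{1,\alpha}_{\gamma}(M)$, and its conclusion is $u\in C^{2,\alpha}_{\gamma}(M)$ with the \emph{same} $\gamma$. In your scheme the Sobolev embedding of Proposition \ref{Sobolev:lemma} forces you to take $\beta>\gamma$ (you need $C^{0,\alpha}_{\gamma-2}\subset L^{q}_{0,\beta-2}$), so after solving on the double and embedding back you only know $w\in C^{1,\alpha}_{\beta}(M)$ with $\beta>\gamma$, i.e.\ with \emph{slower} decay; applying Lemma \ref{lemma2}(a) at the only admissible weight then yields $w\in C^{2,\alpha}_{\beta}(M)$, not $C^{2,\alpha}_{\gamma}(M)$. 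Knowing $\Delta_g w\in C^{0,\alpha}_{\gamma-2}(M)$ does not let the lemma ``see'' the better weight. Improving the decay from $r^{\beta}$ to $r^{\gamma}$ is a real step: it needs either a Green's representation in the flat end (as in the proof of Lemma \ref{lemma1}, applied to a cutoff of $w$, using $2-n<\gamma<0$ and the absence of homogeneous Neumann-harmonic functions of intermediate growth) or some equivalent harmonic-expansion argument, neither of which appears in your proposal. (A smaller glossed point: the extension of $f_2\in C^{1,\alpha}_{\gamma-1}(\Sigma)$ to Neumann data of a genuinely $C^{2,\alpha}_{\gamma}$ function requires a trace-extension operator, not just a cutoff in a tubular neighborhood; this is standard but should be said.)

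For comparison, the paper sidesteps the weight problem entirely: it first treats \emph{compactly supported} data, conformally compactifying the flat end by the inversion $x\mapsto x/|x|^{2}$ and solving a boundary value problem for the conformal operators $(L_{\widetilde g},B_{\widetilde g})$ on the compact manifold $M\cup\{\infty\}$; the solution pulled back to $M$ automatically decays like $r^{2-n}$, which is better than $r^{\gamma}$. General data are then handled by approximating with compactly supported data and passing to the limit using the a priori estimate of Lemma \ref{lemma3}. Your doubling route can be repaired along the same lines: for compactly supported $\tilde f_1$ you are free to choose $\beta\in(2-n,\gamma)$, $\beta\notin\mathbb Z$, so the Sobolev solution already lies in $C^{1,\alpha}_{\beta}(M)\subset C^{1,\alpha}_{\gamma}(M)$ and Lemma \ref{lemma2}(a) applies at weight $\gamma$; then run the density argument with Lemma \ref{lemma3} exactly as in the paper. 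As written, however, the surjectivity proof does not close.
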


\bp
We use the notation in the proof of Lemma \ref{lemma3}.
We choose $R$ large so that $g=\delta$ in $M\backslash K_R$  and consider the set $M\backslash K_R\subset M_{\infty}$, which we still denote by $M_{\infty}$. Then  the diffeomorphism 
$$
\phi:B^+_{R^{-1}}(0)\backslash\{0\}\to M_{\infty}, \quad \phi(x)=\frac{x}{|x|^2}
$$
extends to a coordinate system $\phi:B^+_{R^{-1}}(0)\to M_{\infty}\cup\{\infty\}$.  Moreover, if $\{\partial_{i}\}_{i=1}^{n}$ is the canonical frame on $B^+_{R^{-1}}(0)$, then $\phi$ induces the coordinate frame $\{\phi_*\partial_i\}_{i=1}^{n}$ on $M_{\infty}\cup\{\infty\}$.

Let us define a metric $\widetilde{g}=\zeta^4 g$ on $M$, where $\zeta$ is a positive smooth function on $M$ satisfying $\zeta(\phi(x))=|x|$ for all $x\in B^+_{R^{-1}}(0)\backslash \{0\}$. Observe that $\widetilde{g}$ can be extended to a smooth metric on $M\cup\{\infty\}= K_R\cup M_{\infty}\cup \{\infty\}$, still denoted by $\widetilde{g}$. In fact, for any $x\in B^+_{R^{-1}}(0)\backslash \{0\}$, we have $\widetilde{g}_{\phi(x)}\big(\phi_*\partial_i,\phi_*\partial_j\big)=\delta_{ij}$.

In this setting,
let us consider the problem of finding $u\in C^{2,\a}_{\gamma}(M)$ satisfying 
\begin{align}\label{lemma4:1}
\begin{cases}
-\Delta_g u=f\,,\:\text{in}\:M\,,
\\
\frac{\d}{\d\eta_g} u=\bar{f}\,,\:\text{on}\:\Sigma\,,
\end{cases}
\end{align}
for given $f\in C^{0,\a}_{\gamma-2}(M)$ and $\bar{f}\in C^{1,\a}_{\gamma-1}(\Sigma)$. 
Let us first assume that $f$ and $\bar{f}$ are compactly supported.

By eventually multiplying 
$f$ and $\bar{f}$ by real constants and using the notation of Section \ref{firstproof}, this is equivalent to 
\begin{align}\notag
\begin{cases}
L_g u-R_g u=f\,,\:&\text{in}\:M\,,
\\
B_g u-H_g u=\bar{f}\,,\:&\text{on}\:\Sigma
\end{cases}
\end{align}
and using that $L_{\widetilde{g}}(\zeta^{2-n}u)=\zeta^{-n-2}L_g u$ and $B_{\widetilde{g}}(\zeta^{2-n}u)=\zeta^{-n}B_g u$, this becomes
\begin{align}\label{lemma4:3'}
\begin{cases}
L_{\widetilde{g}} v-\zeta^{-4}R_g v=\zeta^{-n-2}f\,,\:&\text{in}\:M\,,
\\
B_{\widetilde{g}} v-\zeta^{-2}H_g v=\zeta^{-n}\bar{f}\,,\:&\text{on}\:\Sigma\,,
\end{cases}
\end{align}
where $v=\zeta^{2-n}u$. Now we shall find $v\in C^{\infty}(M\cup\{\infty\})$ solving 
\begin{align}\label{lemma4:3}
\begin{cases}
L_{\widetilde{g}} v-\zeta^{-4}R_g v=\zeta^{-n-2}f\,,\:&\text{in}\:M\cup\{\infty\}\,,
\\
B_{\widetilde{g}} v-\zeta^{-2}H_g v=\zeta^{-n}\bar{f}\,,\:&\text{on}\:\Sigma\cup\{\infty\}\,,
\end{cases}
\end{align}
so that it solves (\ref{lemma4:3'}) in particular. To that end it suffices to prove uniqueness for this last problem. Thus, suppose that $v$ satisfies 
\begin{align}\notag
\begin{cases}
L_{\widetilde{g}} v-\zeta^{-4}R_g v=0\,,\:\text{in}\:M\cup\{\infty\}\,,
\\
B_{\widetilde{g}} v-\zeta^{-2}H_g v=0\,,\:\text{on}\:\Sigma\cup\{\infty\}\,.
\end{cases}
\end{align}
Then elliptic regularity implies that $v\in C^{\infty}(M\cup\{\infty\})$ and we shall see that actually $v\equiv 0$. Since multiplication is continuous in weighted H\"older spaces (see \cite[Lemma 1]{CSCB}), it follows  that $\zeta^{n-2}v\in C^{k}_{2-n}(M)$  for any $k\geq 0$. 
But  $u=\zeta^{n-2}v$  satisfies
\begin{align}\notag
\begin{cases}
\Delta_g u=0\,,\:\text{in}\:M\,,
\\
\frac{\partial}{\d\eta_g} u=0\,,\:\text{on}\:\Sigma\,,
\end{cases}
\end{align}
which implies $u\equiv 0$, as we can check by a simple integration by parts. Hence, $v\equiv 0$ and we have uniqueness for the problem (\ref{lemma4:3}).
(In particular, $T$ is injective.) Thus, we can always find a solution $v\in C^{\infty}(M\cup\{\infty\})$  to  (\ref{lemma4:3}) and hence a solution $u\in C^{2,\a}_{\gamma}(M)$ to (\ref{lemma4:1}) in case both $f$ and $\bar f$ are compactly supported.

Let us  now consider the general case where 
$f\in C^{0,\a}_{\gamma-2}(M)$ and $\bar{f}\in C^{1,\a}_{\gamma-1}(\Sigma)$.  
We want to find $u\in C^{2,\a}_{\gamma}(M)$ such that $(L u,B u)=(f,\bar{f})$. If $0<\a_1<\a$ and $\gamma<\gamma_1<0$ we can find sequences $\{f_j\}_{j=1}^{\infty}\subset C^{\infty}_{c}(M)$ and $\{\bar{f}_j\}_{j=1}^{\infty}\subset C^{\infty}_{c}(\Sigma)$ such that, as $j\to +\infty$, 
$$
\|f_j-f\|_{C^{0,\a_1}_{\gamma_1-2}(M)}\to 0\,,\:\:\:\|\bar{f_j}-\bar{f}\|_{C^{1,\a_1}_{\gamma_1-1}(\Sigma)}\to 0\,,
$$ 
and
$$
\|f_j\|_{C^{0,\a}_{\gamma-2}(M)}\leq C\|f\|_{C^{0,\a}_{\gamma-2}(M)}\,,\:\:\:\|\bar{f_j}\|_{C^{1,\a}_{\gamma-1}(\Sigma)}\leq C|\bar{f}\|_{C^{1,\a}_{\gamma-1}(\Sigma)}\,.
$$
By the special case already proved, 
we can find $u_j\in C^{2,\a}_{-\gamma}(M)$  so that $(L u_j, B u_j)=(f_j,\bar{f}_j)$. It follows from Lemma \ref{lemma3} that, as $j,k\to +\infty$,
\begin{align}
\|u_j-u_k\|_{C^{2,\a_1}_{\gamma_1}(M)}
\leq C\|f_j-f_k\|_{C^{0,\a_1}_{\gamma_1-2}(M)}+C\|\bar{f}_j-\bar{f}_k\|_{C^{1,\a_1}_{\gamma_1-1}(\Sigma)}\to 0\,,\notag
\end{align}
and
\begin{align}
\|u_j\|_{C^{2,\a}_{\gamma}(M)}
\leq C\|f_j\|_{C^{0,\a}_{\gamma-2}(M)}+C\|\bar{f}_j\|_{C^{1,\a}_{\gamma-1}(\Sigma)}
\leq C\,.\notag
\end{align}
Hence, we can assume that $u_j\to u$ in $C^{2,\a_1}_{\gamma_1}(M)$ for some $u\in C^{2,\a}_{\gamma}(M)$.
As a consequence,  $L u_j\to L u$ in $C^{0,\a_1}_{\gamma_1-2}(M)$ and $B u_j\to B u$ in $C^{1,\a_1}_{\gamma_1-1}(\Sigma)$, as $j\to\infty$, and the result follows from the fact that $L u_j=f_j\to f$ in $C^{0,\a_1}_{\gamma_1-2}(M)$ and $B u_j=\bar{f}_j\to \bar{f}$ in $C^{1,\a_1}_{\gamma_1-1}(\Sigma)$.
\ep

\bp[Proof of Proposition \ref{isomorphism}]
First observe that all the operators $T$ as in the proposition are injective, as we can see by applying the maximum principle. 
Let $\widetilde{\mathcal{C}}$ be the set of all these operators and let $\mathcal{C}\subset\widetilde{\mathcal{C}}$ be the subset of isomorphisms. We consider $\widetilde{\mathcal{C}}$ with the operator norm  topology.
It follows from the Implicit Function Theorem that $\mathcal{C}$ is open in $\widetilde{\mathcal{C}}$. We will  prove that it is also closed.

We set $X=C^{0,\a}_{\gamma-2}(M)$,  $Y=C^{1,\a}_{\gamma-1}(\Sigma)$ and consider $X\times Y$ with the norm 
\[
\|(f,\bar{f})\|_{X\times Y}=\|f\|_X+\|\bar{f}\|_Y.
\]
Let $T_j\in \mathcal{C}$  be a sequence converging to some $T\in\widetilde{\mathcal{C}}$ under the operator norm $\|\,\|_{op}$. We shall prove that $T$ is surjective. 

Given $(f,\bar{f})\in X\times Y$ we must find $u\in C^{2,\a}_{\gamma}(M)$ such that $T(u)=(Lu,Bu)=(f,\bar{f})$. 
Let us write $T_j=(L_j,B_j)$. By hypothesis, there exists  $u_j\in C^{2,\a}_{\gamma}(M)$ satisfying $T(u_j)=(L_j u_j,B_j u_j)=(f,\bar{f})$, so that, by Lemma \ref{lemma3}, there exists $C>0$ such that
$$
\|u_j\|_{ C^{2,\a}_{\gamma}(M)}\leq C\|(f,\bar{f})\|_{X\times Y}\,,
$$
for all $j$.
In particular,  $u_j$ is uniformly bounded in $C^{2,\a}_{\gamma}(M)$. 

If we choose $\a_1\in (0,\a)$ and $\gamma_1\in(\gamma, 0)$, it follows from \cite[Lemma 3]{CSCB} that, by eventually passing to a subsequence, we may assume that $u_j\to u$  in $C^{2,\a_1}_{\gamma_1}(M)$, for some $u\in C^{2,\a}_{\gamma}(M)$. 

We just need to prove that $L_j u_j\to Lu$ in $C^0(M)$ and  $B_j u_j\to Bu$ in $C^0(\Sigma)$ to conclude that $(Lu,Bu)=(f,\bar{f})$.
Observe that $\|T-T_j\|_{op}\to 0$ implies that $\|L-L_j\|_{op}\to 0$ and $\|B-B_j\|_{op}\to 0$.
We also have
\begin{align}\label{propo1:1}
\|L_j u_j-Lu\|_{C^0(M)}\leq \|L(u_j-u)\|_{C^0(M)}+\|(L_j-L) u_j\|_{C^0(M)}\,.
\end{align}
The first term on the right-hand side of (\ref{propo1:1}) converges to zero because $u_j\to u$ in $C^{2,\a_1}_{\gamma_1}(M)$. As for the  second one, 
$$
\|(L_j-L) u_j\|_{C^0(M)}
\leq 
\|(L_j-L) u_j\|_{C^{0,\a}_{\gamma-2}(M)}
\leq \|L_j-L\|_{op}\|u_j\|_{C^{2,\a}_{\gamma}(M)}\to 0\,,
$$
since $\|u_j\|_{C^{2,\a}_{\gamma}(M)}$ is uniformly bounded. This proves that $\|L_j u_j-Lu\|_{C^0(M)}\to 0$. The proof that $\|B_j u_j-Bu\|_{C^0(\Sigma)}\to 0$ is similar, which  proves that $\mathcal{C}$ is closed in $\widetilde{\mathcal{C}}$.

Using the notation in the proof of Lemma \ref{lemma3}, we consider the family of metrics $g_R$ for $R\geq 1$, and observe that the operators of the form $(-\Delta_{g_R}, \frac{\d}{\d \eta_{g_R}})$ are isomorphisms, according to Lemma \ref{lemma4}. Thus, we just need to find a continuous family of injective operators connecting  one of those operators to  $T=(-\Delta_{g}u+h, \frac{\d}{\d \eta_{g}}u+\bar{h})$. This is easily accomplished if we set 
$\displaystyle L_t=-\Delta_{g_{(1-t)^{-1}}}+th$, $\displaystyle B_t=\d/\d\eta_{g_{(1-t)^{-1}}}+t\bar{h}$ and define $T_t=(L_t,B_t)$ for $t\in [0,1)$ and $T_1=T$.
\ep



\end{document}